\numberwithin{equation}{section}
\newtheorem{theorem}{Theorem}[section]
\newtheorem{remark}{Remark}[section]
\newtheorem{proposition}{Proposition}[section]
\newtheorem{lemma}{Lemma}[section]
\newtheorem{corollary}{Corollary}[section]
\newtheorem{assumption}[theorem]{Assumption}
\newtheorem{example}{Example}[section]
\def\II{(\Omega)}
\title{Convergence Rate Analysis of Galerkin Approximation of Inverse Potential Problem\thanks{The work of B. Jin is supported by UK EPSRC grant EP/T000864/1 and EP/V026259/1, X. Lu by the National Science Foundation of China (No. 11871385), and that of Z. Zhou by Hong Kong Research Grants Council grant (No. 15303021)
and an internal grant of The Hong Kong Polytechnic University (Project ID: P0031041, Work Programme: 4-ZZKS).}}
\author{Bangti Jin\thanks{Department of Mathematics, The Chinese University of Hong Kong, Shatin, New  Territories, Hong Kong, P.R. China.
(\texttt{btjin@math.cuhk.edu.hk, b.jin@ucl.ac.uk, bangti.jin@gmail.com})} \and Xiliang Lu\thanks{School of Mathematics and Statistics, and Hubei Key Laboratory of Computational Science, Wuhan University, Wuhan 430072, People's
 Republic of China (\texttt{xllv.math@whu.edu.cn})} \and Qimeng Quan\thanks{School of Mathematics and Statistics, Wuhan University, Wuhan 430072, People's
 Republic of China (\texttt{quanqm@whu.edu.cn})}  \and Zhi Zhou\thanks{Department of Applied Mathematics,
The Hong Kong Polytechnic University, Kowloon, Hong Kong (\texttt{zhizhou@polyu.edu.hk})}}
\begin{document}

\maketitle

\begin{abstract}
In this work we analyze the inverse problem of recovering a space-dependent potential coefficient in an elliptic / parabolic problem from distributed observation. We establish novel (weighted) conditional stability estimates under very mild conditions on the problem data. Then we provide an error analysis of a standard reconstruction scheme based on the standard output least-squares formulation with Tikhonov regularization (by an $H^1$-seminorm penalty), which is then discretized by the Galerkin finite element method with continuous piecewise linear finite elements in space (and also backward Euler method in time for parabolic problems). We present a detailed error analysis of the discrete scheme, and provide convergence rates in a weighted $L^2(\Omega)$ for discrete approximations with respect to the exact potential. The error bounds explicitly depend on the noise level, regularization parameter and discretization parameter(s). Under suitable conditions, we also derive error estimates in the standard $L^2(\Omega)$ and interior $L^2$ norms. The analysis employs sharp a priori error estimates and nonstandard test functions. Several numerical experiments are given to complement the theoretical analysis.\vskip10pt
\textbf{Keywords}: inverse problems, parameter identification, Tikhonov regularization, error estimate
\end{abstract}

\section{Introduction}
In this work, we study the inverse problem of recovering a space-dependent potential coefficient in elliptic and parabolic equations. Let $\Omega\subset \mathbb{R}^d\ (d=1,2,3)$ be a simply connected convex polyhedral domain with a boundary $\partial\Omega$. Then the governing equation in the elliptic and parabolic cases are given respectively by 
\begin{equation}\label{equ:elliptic problem}
\left\{\begin{aligned}
		-\Delta u +qu &= f, \ &\mbox{in}&\ \Omega, \\
		u&=0, \ &\mbox{on}&\ \partial\Omega,
\end{aligned}\right.
\end{equation}
and
\begin{equation}\label{equ:parabolic problem}
	\left\{
	\begin{aligned}
		\partial_tu-\Delta u +qu &= f, \ &\mbox{in}&\ \Omega\times(0,T), \\
		u&=0, \ &\mbox{on}&\ \partial\Omega\times(0,T), \\
		u(\cdot,0)&=u_0, \ &\mbox{in}&\ \Omega,
	\end{aligned}
	\right.
\end{equation}
where $T>0$ is the final time. The functions $f$ and $u_0$ in \eqref{equ:elliptic problem} and \eqref{equ:parabolic problem} are the source and initial data, respectively. The space-dependent potential $q$ belongs to the admissible set $K$ such that
\begin{equation*}
    K = \{q\in L^\infty(\Omega): c_0\leq q(x)\leq c_1 \mbox{ a.e. in } \Omega\},
\end{equation*}
with $0\leq c_0<c_1<\infty$.
To explicitly indicate the dependence of the solution $u$ to problems \eqref{equ:elliptic problem} and \eqref{equ:parabolic problem} on the potential $q$, we write $u(q)$. Further, we are given the observational data $z^\delta$ on $\Omega$ or $\Omega\times (T_0,T)$:
\begin{equation*}
   \left\{\begin{aligned}
       z^\delta(x) &= u(q^\dag)(x) + \xi(x), \quad x\in \Omega,&&\mbox{elliptic}\\
       z^\delta(x,t) &= u(q^\dag)(x,t) + \xi(x,t), \quad (x,t)\in \Omega\times(T_0,T),&& \mbox{parabolic}
   \end{aligned}\right. 
\end{equation*}
where $u(q^\dagger)$ denotes the exact data (corresponding to the exact potential $q^\dagger$), $0\leq T_0<T$, and $\xi$ denotes the measurement noise. The accuracy of the data $z^\delta$ is measured by the noise level $\delta=\|u(q^\dagger)-z^\delta\|_{L^2(\Omega)}$ or $\delta=\|u(q^\dagger)-z^\delta\|_{L^2(T_0,T;L^2(\Omega))}$, in the elliptic and parabolic cases, respectively. The inverse potential problem is to recover the potential $q$ from the noisy observation $z^\delta$. It arises in several practical applications, where $q$ represents the radiativity coefficient in heat conduction \cite{YamamotoZou:2001} and perfusion coefficient in Pennes' bio-heat equation in human physiology \cite{Pennes:1948,TrucuInghamLesnic:2010} (see \cite{ScottRobinson:1998,YueZhangZuo:2008} for experimental studies) and the elliptic case also in quantitative dynamic elastography \cite{Choulli:2020}. 

The inverse potential problem is ill-posed, which poses challenges to construct accurate and stable numerical approximations. A number of reconstruction methods have been designed to overcome the ill-posed nature, with the most prominent one being  Tikhonov regularization \cite{Engl1996Regularization,ItoJin:2015}. 
In practical computation, one still needs to discretize the continuous regularized formulation. This is often achieved by the Galerkin finite element method (FEM) when the domain $\Omega$ is irregular and the problem data ($u_0$ and $f$) have only limited regularity. This strategy has been widely used \cite{YamamotoZou:2001,DengYuYang:2008,YangYuDeng:2008}. Yamamoto and Zou \cite{YamamotoZou:2001} proved the convergence of the discrete approximations in the parabolic case. However, the convergence rates of discrete approximations are generally very challenging to obtain, due to the \textit{inherent nonconvexity} of the regularized functional, which itself stems from the high degree of \textit{nonlinearity} of the parameter-to-state map, despite the PDEs \eqref{equ:elliptic problem} and \eqref{equ:parabolic problem} being linear. Indeed, this has been a long standing issue for the numerical analysis of many nonlinear inverse problems, e.g., parameter identifications for PDEs. So far there have been only very few error bounds on discrete approximations, despite the fact that such an analysis would provide useful guidelines for choosing suitable discretization parameters. For the related inverse conductivity problem, the works \cite{Wang2010ErrorEO,Jin2021EA} derived error bounds in a weighted $L^2(\Omega)$ norm by employing a special test function for elliptic and parabolic cases, and the latter work \cite{Jin2021EA}  also gives the standard $L^2(\Omega)$ error estimates with the help of a positivity condition on the weighted function.

In this work we study the concerned elliptic and parabolic inverse potential problems, and contribute in the following two aspects. First, we establish novel conditional stability estimates for the concerned inverse problem, including both weighted $L^2(\Omega)$ and standard $L^2(\Omega)$ stability. The latter is obtained under a certain positivity condition, which can be verified for a class of problem data. The derivation is purely variational, using only a nonstandard test function, and extends directly to the error analysis. Our analysis strategy is similar to that in the interesting works \cite{doi:10.1137/16M1094476,bachmayr2019identifiability}, which are concerned with recovering the diffusion coefficient from the internal measurements.  Second, we derive novel weighted $L^2(\Omega)$ error bounds for the discrete approximations under very mild regularity conditions on the problem data and unknown coefficient $q$ as well as the standard $L^2(\Omega)$ error bounds under some positivity condition. Note that the analysis does not employ standard source type conditions. Instead, it is achieved by a novel choice of the test function in the weak formulation, as in the conditional stability analysis, adapting the stability argument to the discrete setting, which allows us to bypass the standard source condition. To the best of our knowledge, these results represent the first error bounds for the discrete approximations for the inverse potential problem. Further, we provide several numerical experiments to complement the theoretical analysis.

Now we review existing works on the analysis and numerics of the inverse potential problem. Several uniqueness and stability results have been obtained \cite{PrilepkoKostin:1992,ChoulliYamamoto:2008,KamyninKostin:2010,BerettaCavaterra:2011,Choulli:2020}. Choulli and Yamamoto \cite{ChoulliYamamoto:2008} proved the uniqueness of recovering the potential $q^\dag$, initial condition and boundary coefficient from terminal measurement, and also gave a stability result under a smallness condition. In the parabolic case, Beretta and Cavaterra \cite{BerettaCavaterra:2011} proved the unique recovery of the potential $q(x)$ from the time-averaged observation. More recently, Choulli \cite{Choulli:2020} derived a new stability estimate in the elliptic case. The well-posedness of the continuous regularized formulation has been analyzed for both elliptic / parabolic cases \cite{EnglKunischNeubauer:1989,YamamotoZou:2001,DengYuYang:2008,YangYuDeng:2008}, and convergence rates with respect to the noise level $\delta$ were obtained under various conditions. In the 1D elliptic case, Engl et al \cite[Example 3.1]{EnglKunischNeubauer:1989} derived a convergence rate of the regularized approximation by Tikhonov regularization under the standard source condition with a small sourcewise representer. Hao and Quyen \cite{HaoQuyen:2010} presented a different approach without explicitly using the source condition. More recently, Chen et al \cite{Chen2020Convergence} proved conditional stability of the inverse problem in negative Sobolev spaces, which allows deriving variational inequality type source conditions and also showing convergence rates of the regularized solutions, and the authors studied both elliptic and parabolic cases. Klibanov, Li, and Zhang \cite{KlibanovLiZhang:2020} presented an interesting convexification method for the inverse problem which allows proving globle convergence despite the nonlinearity of the inverse problem (and actually the method allows recovering a time-independent source simultaneously). This work extends the current literature with new stability analysis and error analysis of discrete approximations Broadly speaking, the present work is along the line of research which connects stability analysis with error analysis of discrete schemes (see, e.g., \cite{BurmanErn:2021,BurmanOksanen:2020}) and convergence rate with conditional stability (see, e.g., \cite{ChengYamamoto:2000,WernerHofmann:2020}).

The rest of the paper is organized as follows. In Section \ref{sec:stab} we present novel conditional stability estimates for the concerned inverse problems. In Sections \ref{sec:err-ell} and \ref{sec:err-para}, we describe the regularized formulations and their finite element discretizations, and derive novel error bounds on the discrete approximations, for the elliptic and parabolic cases, respectively. 
In Section \ref{sec:numer}, we present one- and two-dimensional numerical experiments to complement the theoretical analysis. We conclude with some useful notation. For any $m\geq0$ and $p\geq1$, we denote by $W^{m,p}(\Omega)$ the standard Sobolev spaces of order $m$, equipped with the norm $\|\cdot\|_{W^{m,p}(\Omega)}$ and also write $H^{m}(\Omega)$ and $H^{m}_{0}(\Omega)$ with the norm $\|\cdot\|_{H^m(\Omega)}$ when $p=2$ \cite{Adams2003Sobolev}. We denote the $L^2(\Omega)$ inner product by $(\cdot,\cdot)$. We also use Bochner spaces: For a Banach space $B$, we define by
\begin{equation*}
	H^m(0,T;B) = \{v: v(\cdot,t)\in B\ \mbox{for a.e.}\ t\in(0,T)\ \mbox{and}\ \|v\|_{H^m(0,T;B)}<\infty \}.
\end{equation*}
The space $L^\infty(0,T;B)$ is defined similarly. Throughout, we denote by $C$ a generic positive constant not necessarily the same at each occurrence but always independent of the discretization parameters $h$ and $\tau$, the noise level $\delta$ and the regularization parameter $\alpha$.

\section{Conditional stability estimates}\label{sec:stab}

In this section, we present novel conditional stability estimates for the concerned inverse problem. The analysis will also inspire the error analysis of the discrete approximations in Sections \ref{sec:err-ell} and \ref{sec:err-para}.

\subsection{Elliptic inverse problem}
We have the following conditional stability results in weighted and standard $L^2(\Omega)$ norms.
\begin{theorem}\label{thm:stab-ellipt}
Suppose that $q_1,q_2\in K\cap H^1(\Omega)$  and $f\in L^2(\Omega)$, with $\|\nabla q_1\|_{L^2(\Omega)},\|\nabla q_2\|_{L^2(\Omega)}\leq c_{q}$. Let $u(q_1)$ and $u(q_2)$ be the corresponding weak solutions of problem \eqref{equ:elliptic problem}. Then there exists a constant $C$ depends on $c_{q}$ such that
   \begin{equation*}
       \|(q_1-q_2)u(q_1)\|_{L^2(\Omega)}\leq C\|u(q_1)-u(q_2)\|_{H^1(\Omega)}^\frac12.
   \end{equation*}
   Moreover, if there exists a $\beta\geq  0$ such that
    \begin{equation}\label{eqn:positivity}
        u(q_1)(x) \geq  C \mbox{\rm dist}(x,\partial\Omega)^\beta \quad \mbox{a.e. in}\ \Omega,
    \end{equation}
    then with a constant $C$ depending on $c_q$, the following estimate holds  
    \begin{equation*}
        \|q_1-q_2\|_{L^2(\Omega)}\leq C\|u(q_1)-u(q_2)\|_{H^1(\Omega)}^{\frac{1}{2(1+2\beta)}}.
    \end{equation*}
\end{theorem}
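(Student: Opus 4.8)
The plan is to test the equation satisfied by the difference $u(q_1)-u(q_2)$ against a nonstandard test function built from the data themselves, and, for the second assertion, to remove the weight $u(q_1)$ by a boundary-layer splitting combined with the positivity hypothesis \eqref{eqn:positivity}. Write $u_i=u(q_i)$, $w=u_1-u_2$ and $p=q_1-q_2$; subtracting the weak formulations of \eqref{equ:elliptic problem} for $q_1$ and $q_2$ and using the identity $q_1u_1-q_2u_2=pu_1+q_2w$ gives
\begin{equation}\label{eq:plandiff}
(\nabla w,\nabla\varphi)+(q_2w,\varphi)=-(pu_1,\varphi)\qquad\text{for all }\varphi\in H_0^1(\Omega).
\end{equation}
Since $\Omega$ is convex and $f\in L^2(\Omega)$, $H^2$-regularity together with the embedding $H^2(\Omega)\hookrightarrow L^\infty(\Omega)$ (valid for $d\le 3$) gives $u_1\in H_0^1(\Omega)\cap L^\infty(\Omega)$ with $\|u_1\|_{H^2(\Omega)}+\|u_1\|_{L^\infty(\Omega)}\le C\|f\|_{L^2(\Omega)}$, while the energy estimate gives $\|w\|_{H^1(\Omega)}\le C\|f\|_{L^2(\Omega)}$; also $p\in H^1(\Omega)\cap L^\infty(\Omega)$ with $\|\nabla p\|_{L^2(\Omega)}\le 2c_q$ and $\|p\|_{L^\infty(\Omega)}\le c_1-c_0$. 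Hence $\varphi:=pu_1$ is an admissible test function in \eqref{eq:plandiff}: it lies in $H^1(\Omega)$ because $\nabla(pu_1)=u_1\nabla p+p\nabla u_1\in L^2(\Omega)$ (each factor being bounded), and it inherits the zero trace of $u_1$, so $pu_1\in H_0^1(\Omega)$.

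Inserting $\varphi=pu_1$ into \eqref{eq:plandiff} turns the right-hand side into $-\|pu_1\|_{L^2(\Omega)}^2$, so
\begin{equation*}
\|pu_1\|_{L^2(\Omega)}^2=-(\nabla w,u_1\nabla p)-(\nabla w,p\nabla u_1)-(q_2w,pu_1).
\end{equation*}
I would bound the first two terms by $C\|\nabla w\|_{L^2(\Omega)}$ (using the bounds on $\|u_1\|_{L^\infty(\Omega)}$, $\|\nabla p\|_{L^2(\Omega)}$, $\|p\|_{L^\infty(\Omega)}$ and $\|\nabla u_1\|_{L^2(\Omega)}$) and the third by $c_1\|w\|_{L^2(\Omega)}\|pu_1\|_{L^2(\Omega)}\le\tfrac12\|pu_1\|_{L^2(\Omega)}^2+C\|w\|_{L^2(\Omega)}^2$ via Young's inequality, and then absorb the half-term into the left, arriving at $\|pu_1\|_{L^2(\Omega)}^2\le C\big(\|w\|_{H^1(\Omega)}+\|w\|_{H^1(\Omega)}^2\big)$. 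Since $\|w\|_{H^1(\Omega)}\le C\|f\|_{L^2(\Omega)}$ is bounded, the quadratic term is dominated by the linear one, which gives the first assertion.

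For the second assertion assume \eqref{eqn:positivity}. If $\beta=0$, then $u_1\ge C$ a.e. on $\Omega$, hence $\|p\|_{L^2(\Omega)}\le C\|pu_1\|_{L^2(\Omega)}$ and the claim (with exponent $\tfrac12$) follows from the first part. If $\beta>0$, then for small $\rho>0$ I would split $\Omega$ into $\Omega_\rho=\{x\in\Omega:\operatorname{dist}(x,\partial\Omega)>\rho\}$ and the boundary layer $\Omega\setminus\Omega_\rho$: on $\Omega_\rho$, \eqref{eqn:positivity} yields $u_1\ge C\rho^\beta$, so $\int_{\Omega_\rho}p^2\le C\rho^{-2\beta}\|pu_1\|_{L^2(\Omega)}^2\le C\rho^{-2\beta}\|w\|_{H^1(\Omega)}$ by the first part, while on the layer $|\Omega\setminus\Omega_\rho|\le C\rho$ (a convex polyhedron being Lipschitz) and $\|p\|_{L^\infty(\Omega)}\le c_1-c_0$ give $\int_{\Omega\setminus\Omega_\rho}p^2\le C\rho$. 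Thus $\|p\|_{L^2(\Omega)}^2\le C\big(\rho^{-2\beta}\|w\|_{H^1(\Omega)}+\rho\big)$, and choosing $\rho\sim\|w\|_{H^1(\Omega)}^{1/(2\beta+1)}$ to balance the two terms---handling separately, and trivially, the case where this value leaves the admissible range of $\rho$, where one uses the a priori bound on $\|p\|_{L^2(\Omega)}$---gives $\|p\|_{L^2(\Omega)}^2\le C\|w\|_{H^1(\Omega)}^{1/(2\beta+1)}$, i.e. the stated estimate.

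The step I expect to be the main obstacle is the choice and admissibility of the test function $pu_1=(q_1-q_2)u(q_1)$: verifying that it belongs to $H_0^1(\Omega)$ is precisely where the convexity of $\Omega$ enters, through the $H^2$-regularity of $u_1$ and the embedding $H^2(\Omega)\hookrightarrow L^\infty(\Omega)$ for $d\le 3$. Once this test function is in hand, the algebraic identity, the Cauchy--Schwarz and Young estimates, and the $\rho$-optimization are all routine.
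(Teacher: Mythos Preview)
Your proposal is correct and follows essentially the same route as the paper: the same nonstandard test function $\varphi=(q_1-q_2)u(q_1)$, the same verification of $\varphi\in H_0^1(\Omega)$ via $H^2$-regularity and the embedding $H^2(\Omega)\hookrightarrow L^\infty(\Omega)$, and the same boundary-layer splitting $\Omega=\Omega_\rho\cup\Omega_\rho^c$ with optimization over $\rho$. The only cosmetic difference is that the paper bounds $(q_2w,\varphi)$ directly by $c_1\|w\|_{L^2(\Omega)}\|\varphi\|_{L^2(\Omega)}\le C\|w\|_{H^1(\Omega)}$ rather than using Young's inequality and absorption, but this changes nothing of substance.
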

\begin{proof}
By the weak formulations of $u(q_1)$ and $u(q_2)$, for any $\varphi\in H_0^1(\Omega)$
   \begin{equation*}
       ((q_1-q_2)u(q_1),\varphi) = -(\nabla(u(q_1)-u(q_2)),\nabla\varphi)-(q_2(u(q_1)-u(q_2)),\varphi)=:{\rm I}.
   \end{equation*}
Let $\varphi=(q_1-q_2)u(q_1)$. Note that
$\nabla\varphi=(\nabla q_1-\nabla q_2)u(q_1)+(q_1-q_2)\nabla u(q_1)$. Since $q_1\in K \cap H^1(\Omega)$, elliptic regularity theory implies $u(q_1)\in H^2(\Omega)$, and by Sobolev embedding theorem, we have $u(q_1)\in L^\infty(\Omega)$ for $d=1,2,3$. Then we have
$\|\varphi\|_{L^2(\Omega)}\leq C$ and $\|\nabla\varphi\|_{L^2(\Omega)}\leq C$, i.e., $\varphi\in H_0^1(\Omega)$. Now by the Cauchy-Schwarz inequality, we obtain the first assertion by
   \begin{equation}
       |{\rm I}|\leq C(\|\nabla(u(q_1)-u(q_2))\|_{L^2(\Omega)}+\|u(q_1)-u(q_2)\|_{L^2(\Omega)})\leq C\|u(q_1)-u(q_2)\|_{H^1(\Omega)}.
\end{equation}
Next we decompose the domain $\Omega$ into two disjoint sets $\Omega=\Omega_\rho\cup \Omega_\rho^c$, with $\Omega_\rho=\{x\in\Omega:{\rm dist}(x,\partial\Omega)\geq\rho\}$ and $\Omega_\rho^c=\Omega\setminus\Omega_\rho$, with the constant $\rho>0$ to be chosen. On the subdomain $\Omega_\rho$, we have
   \begin{align*}
     \int_{\Omega_\rho}(q_1-q_2)^2{\rm d}x &= \rho^{-2\beta}\int_{\Omega_\rho }(q_1-q_2)^2\rho^{2\beta}{\rm d}x\leq \rho^{-2\beta}\int_{\Omega_\rho }(q_1-q_2)^2\mathrm{dist}(x,\partial\Omega)^{2\beta}{\rm d}x\\
     & \leq C\rho^{-2\beta}\int_{\Omega_\rho }(q_1-q_2)^2u(q_1)^2{\rm d}x\leq C\rho^{-2\beta}\|u(q_1)-u(q_2)\|_{H^1(\Omega)}.
   \end{align*}
 Meanwhile, by the box constraint of $K$, we have
$\int_{\Omega_\rho^c}(q_1-q_2)^2{\rm d}x \leq C|\Omega_\rho^c|\leq C\rho.$
Then the desired result follows by balancing the last two estimates with $\rho$.
\end{proof}

The positivity condition \eqref{eqn:positivity} quantifies the decay rate of the solution $u(q_1)$ to zero as $\mathrm{dist}(x,\partial\Omega)\to0^+$ (due to the presence of a zero Dirichlet boundary condition). It can be verified under suitable conditions on the source $f$. This requires the following property of Green's function for the elliptic problem. The notation $B(x,r)$ denotes the ball centered at $x\in\mathbb{R}^d$ with a radius $r$.

\begin{theorem}\label{thm:Green}
Let the diffusion coefficient $a\in L^\infty(\Omega)$ with a strictly positive lower bound over $\Omega$. For any $y\in\Omega$ and $r>0$, let $G_q(x):=G_q(x,y)\in H^1(\Omega\setminus B(y,r))\cap W^{1,1}_0(\Omega)$ be Green's function for the elliptic operator $-{\rm div}(a\nabla\cdot)+qI$ (with a zero Dirichlet boundary condition). Then for $d\geq 2$, the following estimate holds 
\begin{equation*}
    G_q(x,y)\geq C|x-y|^{2-d}, \quad \mbox{for} \ |x-y|\leq \tfrac{1}{2}{\rm dist}(x,\partial\Omega).
\end{equation*}
\end{theorem}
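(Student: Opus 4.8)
The plan is to obtain this as a refinement of the classical two-sided bounds for Green's functions of second-order divergence-form operators with bounded measurable coefficients (in the spirit of Littman--Stampacchia--Weinberger and Gr\"uter--Widman), carefully tracking the contribution of the zeroth-order term $qI$. Throughout, recall that, $q$ being in the admissible set, $0\le q\le c_1$, so $L_q:=-\mathrm{div}(a\nabla\cdot)+qI$ is uniformly elliptic with a bounded nonnegative potential; in particular $L_q$ obeys the weak maximum principle and the full De Giorgi--Nash--Moser machinery (interior Harnack for positive solutions, local boundedness, Caccioppoli's inequality), with constants depending only on $d$, the ellipticity ratio of $a$, $c_1$ and $\mathrm{diam}(\Omega)$. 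I shall also use the companion \emph{upper} bound $G_q(x,y)\le C|x-y|^{2-d}$ for $d\ge3$ (and $\le C(1+|\log|x-y||)$ for $d=2$), which follows from the classical upper bound together with the pointwise comparison $G_q\le G_0$ (valid because $q\ge0$). Two elementary geometric facts will be used repeatedly: if $|x-y|\le\tfrac12\mathrm{dist}(x,\partial\Omega)$, then $\mathrm{dist}(x,\partial\Omega)\ge2|x-y|$ and, by the triangle inequality, $\mathrm{dist}(y,\partial\Omega)\ge|x-y|$.

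The core estimate is a decay bound for $G_q(\cdot,y)$ on small spheres about the pole: for every $t$ with $3t<\mathrm{dist}(y,\partial\Omega)$ one has $\inf_{|w-y|=2t}G_q(w,y)\ge ct^{2-d}$. To prove it, test the defining weak identity $\int_\Omega a\nabla G_q(\cdot,y)\cdot\nabla\varphi+\int_\Omega qG_q(\cdot,y)\varphi=\varphi(y)$ with a cutoff $\varphi$ equal to $1$ on $B(y,t)$, vanishing outside $B(y,2t)$, with $|\nabla\varphi|\le C/t$. Since $0\le\int_\Omega qG_q\varphi\le c_1\int_{B(y,2t)}G_q\le Cc_1t^2$ by the upper bound, we get $\int_\Omega a\nabla G_q\cdot\nabla\varphi\ge\tfrac12$ once $t$ is small (the threshold depending on $c_1$); as $\nabla\varphi$ is supported in $B(y,2t)\setminus B(y,t)$ and of size $O(1/t)$, Cauchy--Schwarz forces $\int_{B(y,2t)\setminus B(y,t)}|\nabla G_q|^2\ge ct^{2-d}$. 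Because $G_q(\cdot,y)$ is $L_q$-harmonic on $\Omega\setminus\{y\}$, Caccioppoli's inequality on a slightly larger annulus (using the upper bound once more to bound $\|G_q\|_{L^2}$ there, and absorbing the $O(c_1t^2)$ zeroth-order correction for small $t$) turns this into $\|G_q-\overline{G_q}\|_{L^2}^2\ge ct^{4-d}$ over that annulus, hence, using $G_q\ge0$, $\sup G_q\ge ct^{2-d}$ there; an interior Harnack chain covering the annulus upgrades the supremum to the infimum, giving the claim.

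It remains to transport this bound from the sphere of radius $\tfrac15|x-y|$ out to $x$. Given $x$ with $|x-y|\le\tfrac12\mathrm{dist}(x,\partial\Omega)$, set $t=|x-y|/10$ (so $3t<|x-y|\le\mathrm{dist}(y,\partial\Omega)$ and the sphere estimate applies) and let $p=y+2t\,(x-y)/|x-y|$, the point of $\partial B(y,2t)$ on the segment $[y,x]$; then $G_q(p,y)\ge ct^{2-d}=c'|x-y|^{2-d}$. The segment $[p,x]\subset[y,x]$ has length $\tfrac45|x-y|$; every point $q'$ of it satisfies $|q'-y|\ge2t$ and, using $\mathrm{dist}(x,\partial\Omega)\ge2|x-y|$ and the triangle inequality, $\mathrm{dist}(q',\partial\Omega)\ge|x-y|$. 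Hence $[p,x]$ is covered by a uniformly bounded number of balls of radius $t/2$ contained in $\Omega\setminus\overline{B(y,t)}$, on each of which $G_q(\cdot,y)$ is a positive $L_q$-harmonic function; chaining the interior Harnack inequality along them yields $G_q(x,y)\ge C^{-1}G_q(p,y)\ge c|x-y|^{2-d}$. The case $d=2$ is handled identically, with $|x-y|^{2-d}=1$ and logarithmic weights replacing the powers of $t$ in the upper-bound and energy estimates.

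I expect the main obstacle to be the zeroth-order term $qI$. It cannot be disposed of by a crude comparison: enlarging $q$ \emph{decreases} $G_q$, so there is no cheap minorant of the form $G_q\ge G_{\text{simpler}}$. The two points that rescue the argument are that (i) only constants depending on $\|q\|_{L^\infty}\le c_1$ and $\mathrm{diam}(\Omega)$ are required, and (ii) at the small scale $t$ near the pole the potential is genuinely subordinate, contributing only an $O(c_1t^2)$ perturbation to the flux identity and to the Caccioppoli constant, so the singular structure of $G_q$ at $y$ is that of the purely second-order operator. A secondary, bookkeeping-level difficulty is geometric: the hypothesis $|x-y|\le\tfrac12\mathrm{dist}(x,\partial\Omega)$ does not place $B(y,2|x-y|)$ inside $\Omega$, so the sphere estimate cannot be run at the scale $|x-y|$ directly; anchoring it at the comfortably interior scale $|x-y|/10$ and propagating outward along a Harnack chain that remains well inside $\Omega$ — which is precisely where the two geometric inequalities are used — circumvents this.
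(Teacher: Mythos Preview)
Your overall architecture—test the defining identity with a cutoff to get an energy lower bound near the pole, feed that through Caccioppoli to a pointwise lower bound on a sphere, then Harnack-chain out to $x$—is correct and is essentially the same machinery the paper uses. There is, however, one genuine gap. You state the sphere estimate $\inf_{|w-y|=2t}G_q(w,y)\ge ct^{2-d}$ for every $t$ with $3t<\mathrm{dist}(y,\partial\Omega)$, but your proof of it needs the zeroth-order contribution $c_1\int_{B(y,2t)}G_q\le Cc_1t^2$ to be at most $\tfrac12$, which only holds for $t\le t_0(c_1)$. When you then set $t=|x-y|/10$ in the transport step and assert ``the sphere estimate applies,'' this is unjustified whenever $|x-y|>10t_0$. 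The repair is routine but must be said: anchor the sphere estimate at $t=\min(|x-y|/10,t_0)$, and for $|x-y|>10t_0$ run the Harnack chain from $\partial B(y,2t_0)$ out to $x$; the chain length is $O(\mathrm{diam}(\Omega)/t_0)$, hence uniformly bounded, and since $|x-y|^{2-d}\le(10t_0)^{2-d}$ for $d\ge3$ (and $=1$ for $d=2$) the fixed lower bound you obtain still dominates $C|x-y|^{2-d}$.

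The paper circumvents this two-regime split by a slightly different packaging: it fixes the small scale $r_0=r_0(c_1)$ once (via $\int_{B(y,r_0)}G_q\le\tfrac{1}{2c_1}$, using only $G_q\in W^{1,1}$ rather than your pointwise upper bound), then tests with a single two-scale cutoff $\varphi_2$ supported on $B(y,r_0)\cup\bigl(B(y,r)\setminus B(y,r/2)\bigr)$, and applies Harnack once on the full annulus $\{r_0/4\le|z-y|\le 3r/2\}$, whose aspect ratio is bounded since $r\le\mathrm{diam}(\Omega)$. This yields the factor $r^{d-2}$ (not $r_0^{d-2}$) directly in front of $\sup G_q$, so the desired $r^{2-d}$ lower bound drops out without a separate large-$r$ argument. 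Two minor points on your write-up: in the Caccioppoli step the conclusion should be $\|G_q\|_{L^2}^2\ge ct^{4-d}$, not $\|G_q-\overline{G_q}\|_{L^2}^2$ (constants are not $L_q$-harmonic when $q>0$, so the mean-subtracted Caccioppoli is not the right form here), and there is no need to invoke the upper bound at that stage.
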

\begin{proof}
When $q\equiv0$, the result is well known for $d\geq3$ \cite{LittmanStampacchia:1963,GruterWidman:1982}. We prove the slightly more general case for completeness. Let $\rho(x)={\rm dist}(x,\partial\Omega)$.
Since the operator $-{\rm div}(a\nabla\cdot)+qI$ is self-adjoint, there holds $G_q(x,y)=G_q(y,x)$. It suffices to prove
\begin{equation}
    G_q(x,y)\geq C|x-y|^{2-d}, \quad \mbox{for} \ |x-y|\leq \tfrac{1}{2}\rho(y).
\end{equation}
By definition, we have
\begin{equation}\label{equ: Green}
    \int_{\Omega}a(z)\nabla G_q(z,y)\cdot\nabla\varphi(z)+q(z)G_q(z,y)\varphi(z)\ \mathrm{d}z=\varphi(y), \quad\forall\varphi\in C_0^\infty(\Omega).
\end{equation}
Let $r:=|x-y|$. Consider a cut-off function $\varphi_1\in C_0^\infty(\Omega)$ with the following properties:
$\varphi_1\equiv 1$ on $ \ B(y,r)\cap (\Omega\setminus B(y,\frac r2))$ and $ \varphi_1\equiv 0$ on  $(\Omega\setminus B(y,\frac{3r}{2}))\cup B(y,\frac{r}{4})$, meanwhile $0\leq\varphi_1\leq 1$ and $|\nabla\varphi_1|\leq Cr^{-1}$. By inserting a test function $(\varphi_1(z))^2G_q(z,y)$ into \eqref{equ: Green} and applying the boundedness and uniform ellipticity of the operator and the Cauchy-Schwarz inequality, since  $G_q(z):=G_q(z,y)\in H^1(\Omega\setminus B(y,\frac{r}{4}))$, we derive
\begin{align*}
    &\int_{\frac{r}{4}\leq |z-y|\leq\frac{3r}{2}}\varphi_1(z)^2|\nabla G_q(z,y)|^2\ \mathrm{d}z  
    \leq C\int_{\frac{r}{4} |z-y|\leq\frac{3r}{2}}\varphi_1(z)G_q(z,y)|\nabla\varphi_1(z)||\nabla G_q(z,y)|\ \mathrm{d}z \\
    \leq &C\Big(\int_{\frac{r}{4}\leq |z-y|\leq\frac{3r}{2}}\varphi_1(z)^2|\nabla G_q(z,y)|^2\ \mathrm{d}z\Big)^\frac{1}{2}\Big(\int_{\frac{r}{4}\leq |z-y|\leq\frac{3r}{2}}G_q(z,y)^2|\nabla\varphi_1(z)|^2\ \mathrm{d}z\Big)^\frac{1}{2}.
\end{align*}
This and the construction of $\varphi_1$ lead to
\begin{equation}\label{inequ: gradient Green estimate on radius r}
    \int_{\frac{r}{2}\leq |z-y|\leq r}|\nabla G_q(z)|^2\mathrm{d}z\leq Cr^{d-2}\sup_{\frac{r}{4}\leq |z-y|\leq \frac{3r}{2}}|G_q(z,y)|^2.
\end{equation}
Since $G_q(x,y)\in W^{1,1}_0(\Omega)$ and $q\leq c_1$, we can choose a sufficiently small radius $r_0:=r_0(c_1)<\frac{r}{3}$ such that
\begin{equation*}
    \int_{z\in B(y,r_0)}G_q(z,y)\ \mathrm{d}z\leq\tfrac{1}{2c_1}.
\end{equation*}
Replacing the radius $r$ by $r_0$ and repeating the argument of \eqref{inequ: gradient Green estimate on radius r} yield
\begin{equation*}
    \int_{\frac{r_0}{2}\leq |z-y|\leq r_0}|\nabla G_q(z)|^2\mathrm{d}z\leq Cr_0^{d-2}\sup_{\frac{r_0}{4}\leq |z-y|\leq \frac{3r_0}{2}}|G_q(z,y)|^2.
\end{equation*}
Let $\varphi_2\in C_0^\infty(\Omega)$ be a test function of \eqref{equ: Green} satisfying 
$\varphi_2\equiv 1$ on $B(y,\frac{r_0}{2})$ and $ \varphi_2\equiv 0$ on $((\Omega\setminus B(y,r_0))\cap B(y,\frac{r}{2}))\cup (\Omega\setminus B(y,r))$, 
with $0\leq\varphi_2\leq 1$ on $\Omega$, $|\nabla\varphi_2|\leq Cr_0^{-1}$ on $B(y,r_0)$ and $|\nabla\varphi_2|\leq Cr^{-1}$ on $(\Omega\setminus B(y,r/2))\cap B(y,r)$. It follows from the boundedness of the operator, and the last three estimates that
\begin{align*}
    1 &= \int_{\Omega}a(z)\nabla G_q(z,y)\cdot\nabla\varphi_2(z)+q(z)G_q(z,y)\varphi_2(z)\ \mathrm{d}z \\
    &\leq \int_{\frac{r_0}{2}\leq|z-y|\leq r_0}a(z)\nabla G_q(z,y)\cdot\nabla\varphi_2(z)\ \mathrm{d}z+\int_{\frac{r}{2}\leq|z-y|\leq r}a(z)\nabla G_q(z,y)\cdot\nabla\varphi_2(z)\ \mathrm{d}z\\
    &\quad +\frac{1}{2}+\int_{\frac{r}{2}\leq|z-y|\leq r}q(z) G_q(z,y)\varphi_2(z)\ \mathrm{d}z \\
    &\leq Cr_0^{d-2}\sup_{\frac{r_0}{4}\leq|z-y|\leq\frac{3r_0}{2}}G_q(z,y)+
          C r^{d-2}\sup_{\frac{r}{4}\leq|z-y|\leq\frac{3r}{2}}G_q(z,y)+\frac{1}{2}+Cr^d\sup_{\frac{r}{2}\leq|z-y|\leq r}G_q(z,y) \\
    &\leq Cr^{d-2}\sup_{\frac{r_0}{4}\leq|z-y|\leq\frac{3r}{2}}G_q(z,y)+\frac{1}{2}\leq Cr^{d-2}\inf_{\frac{r_0}{4}\leq|z-y|\leq\frac{3r}{2}}G_q(z,y)+\frac{1}{2}\leq C|x-y|^{d-2}G_q(x,y)+\frac{1}{2},
\end{align*}
where we have used Harnack's inequality \cite[p. 189]{Gilbarg1977EllipticPD} for Green's function $G_q(z)$ on the compact subset $\{z\in\Omega:\frac{r_0}{4}\leq|z-y|\leq\frac{3r}{2}\}\subset\subset\Omega$, with the constant $C$ depending on the $d$, $c_1$ and $\Omega$. This completes the proof of the theorem.
\end{proof}

\begin{remark}
When $d=1$, i.e., $\Omega=(a,b)$ with $-\infty<a<b<\infty$, Green's function $G_{c_1}(x,y)$ of the operator $-\Delta+c_1I$ is explicitly given by
\begin{equation}\label{equ: 1d Green function}
G_{c_1}(x,y)=\left\{\begin{aligned}
		-\frac{e^{2\sqrt{c_1}a}(e^{2\sqrt{c_1}y}-e^{2\sqrt{c_1}b})}{2\sqrt{c_1}(e^{\sqrt{c_1}(2a+y)}-e^{\sqrt{c_1}(2b+y)})}\cdot(e^{-\sqrt{c_1}x}-e^{\sqrt{c_1}(x-2a)}), \ &\quad a\leq x\leq y, \\
		-\frac{e^{2\sqrt{c_1}b}(e^{2\sqrt{c_1}y}-e^{2\sqrt{c_1}a})}{2\sqrt{c_1}(e^{\sqrt{c_1}(2a+y)}-e^{\sqrt{c_1}(2b+y)})}\cdot(e^{-\sqrt{c_1}x}-e^{\sqrt{c_1}(x-2b)}),  \ &\quad y\leq x\leq b.
\end{aligned}\right.
\end{equation}
Now consider the asymptotics of the function $G_{c_1}(x,y)$ near the boundary. Let $y$ be close to the point $a$ and $|x-y|\leq \frac{1}{2}(y-a)$. Since $e^x-1\geq x$ on $\mathbb{R}$, we have 
\begin{equation*}
G_{c_1}(x,y)\left\{\begin{aligned}
		&\simeq e^{\sqrt{c_1}(x-2a)}-e^{-\sqrt{c_1}x}\geq C(x-a)\geq C|x-y|, \ &a\leq x\leq y, \\
		&\simeq e^{2\sqrt{c_1}y}-e^{2\sqrt{c_1}a}\geq C(y-a)\geq C|x-y|,  \ &a\leq y\leq x,
\end{aligned}\right.
\end{equation*}
where the symbol $``\simeq"$ denotes by $``="$ up to a positive constant depending on $\Omega$ and $c_1$. A similar result holds when $y$ is close to the point $b$. Since the operator $-\Delta+c_1I$ is self-adjoint, we have $$G_{c_1}(x,y)\geq C|x-y|\quad \mbox{for}\quad|x-y|\leq \tfrac{1}{2}\rho(x):={\rm dist}(x,\partial\Omega).$$
That is, the assertion in Theorem \ref{thm:Green} holds also for $d=1$. For a general potential $q\in K\cap H^1(\Omega)$, by the weak maximum principle, for any fixed $y$, we have $G_q(x,y)\geq G_{c_1}(x,y)$ a.e. $x\in \Omega$, and thus the desired assertion follows. 
\end{remark}

Now we can state a sufficient condition for the positivity condition \eqref{eqn:positivity}.
\begin{proposition}\label{propo: bete=2 in ellip}
Let $q\in H^1(\Omega)\cap K$ and $f\geq c_f>0$ a.e. in $\Omega$. Then condition \eqref{eqn:positivity} holds with $\beta=2$.
\end{proposition}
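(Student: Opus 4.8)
The plan is to combine the Green's function representation of the solution with the pointwise lower bound established in Theorem~\ref{thm:Green}. Write $\rho(x):=\mathrm{dist}(x,\partial\Omega)$ and let $G_q(x,y)$ be Green's function for $-\Delta+qI$ with homogeneous Dirichlet boundary condition, i.e.\ the case $a\equiv1$ of Theorem~\ref{thm:Green}. Since $q\in K\cap H^1(\Omega)$ and $f\in L^2(\Omega)$, elliptic regularity gives $u(q)\in H^2(\Omega)\hookrightarrow C(\overline\Omega)$ for $d\le 3$, so the representation
\[
   u(q)(x)=\int_\Omega G_q(x,y)f(y)\,\mathrm{d}y
\]
holds for every $x\in\Omega$. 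Because $q\ge c_0\ge 0$, the weak maximum principle yields $G_q\ge 0$ on $\Omega\times\Omega$, and by assumption $f\ge c_f>0$; hence one may discard the contribution of $y$ outside the ball $B(x,\tfrac12\rho(x))\subset\Omega$ to get $u(q)(x)\ge c_f\int_{B(x,\rho(x)/2)}G_q(x,y)\,\mathrm{d}y$.

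On this ball the hypothesis $|x-y|\le\tfrac12\rho(x)$ of Theorem~\ref{thm:Green} is met, so $G_q(x,y)\ge C|x-y|^{2-d}$ for $d\ge2$, while the Remark following it gives $G_q(x,y)\ge C|x-y|$ for $d=1$. Passing to polar coordinates centered at $x$,
\[
   \int_{B(x,\rho(x)/2)}|x-y|^{2-d}\,\mathrm{d}y=\omega_{d-1}\int_0^{\rho(x)/2}s^{2-d}s^{d-1}\,\mathrm{d}s=\omega_{d-1}\int_0^{\rho(x)/2}s\,\mathrm{d}s=\frac{\omega_{d-1}}{8}\,\rho(x)^2\qquad(d=2,3),
\]
and for $d=1$ the corresponding integral of $|x-y|$ over $\bigl(x-\tfrac12\rho(x),x+\tfrac12\rho(x)\bigr)$ equals $\tfrac14\rho(x)^2$. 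Therefore, in each dimension $d=1,2,3$ we obtain $u(q)(x)\ge Cc_f\,\rho(x)^2$, with $C$ depending only on $d$, $c_1$ and $\Omega$; this is precisely \eqref{eqn:positivity} with $\beta=2$.

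The statement is essentially a corollary of Theorem~\ref{thm:Green}, so no serious difficulty is expected. The only points requiring a little care are (i) justifying the pointwise Green's function representation for $f\in L^2(\Omega)$, which is routine once the $H^2$-regularity and continuity of $u(q)$ are used together with $G_q(x,\cdot)\in L^2(\Omega)$ for $d\le 3$ (so that $G_q(x,\cdot)f\in L^1(\Omega)$ by Cauchy--Schwarz), and (ii) making sure the lower bound of Theorem~\ref{thm:Green} is invoked in the form $|x-y|\le\tfrac12\mathrm{dist}(x,\partial\Omega)$ --- involving the distance of $x$ rather than of $y$ to the boundary --- so that it can legitimately be integrated over the whole ball $B(x,\tfrac12\rho(x))$; this is indeed how that estimate is phrased. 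One could likely lower the exponent (e.g.\ to $\beta=1$ via a Hopf-type argument) under extra regularity on $\partial\Omega$ and $q$, but the value $\beta=2$ already suffices for Theorem~\ref{thm:stab-ellipt} and follows directly from the material already developed.
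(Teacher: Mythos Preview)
Your proof is correct and follows essentially the same route as the paper: representing $u(q)$ via Green's function, discarding the integral outside $B(x,\tfrac12\rho(x))$ using $G_q\ge0$ and $f\ge c_f$, applying the pointwise lower bound from Theorem~\ref{thm:Green}, and integrating in polar coordinates to obtain the factor $\rho(x)^2$. Your version is slightly more detailed---you make the polar integral explicit and treat the $d=1$ case separately via the Remark---but the argument is the same.
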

\begin{proof}
Recall that for every $y\in \Omega$, there exists a unique Green's function $G_q(\cdot,y)\in H^1(\Omega\setminus B(y,r))\cap W_0^{1,1}(\Omega)$ for the elliptic operator $-\Delta+qI$, such that 
\begin{equation*}
    \int_\Omega\nabla G_q(x,y)\cdot \nabla \varphi(x)+qG_q(x,y)\varphi(x)\ \mathrm{d}x = \varphi(y),\quad\forall \varphi\in C_0^\infty(\Omega). 
\end{equation*}
By Theorem \ref{thm:Green}, we have 
\begin{equation*}
    G_q(x,y) \geq C|x-y|^{-(d-2)}\quad \mbox{for } |x-y|\leq \tfrac12\rho(x):=\mathrm{dist}(x,\partial\Omega),\quad d\geq 2.
\end{equation*} 
Now for any $x\in \Omega$, let $B(x,\frac{\rho(x)}{2})\subset\Omega$ be the ball centered at $x$ with a radius $\frac{\rho(x)}{2}$. Since $G_q(x,y)\geq0$, $x,y\in\Omega$, we have
\begin{align*}
    u(q)(x) &= \int_\Omega G_q(x,y)f(y){\rm d} y \geq \int_{B(x,\frac{\rho(x)}{2})}f(y)G_q(x,y){\rm d}y \\
    &\geq c_fC\int_{B(x,\frac{\rho(x)}{2})}|x-y|^{-(d-2)}{\rm d}y
    \geq C\rho^2(x) = C\mathrm{dist}(x,\partial\Omega)^2,
\end{align*}
and thus the desired result follows directly.
\end{proof}

\begin{remark}
The argument for deriving the standard $L^2(\Omega)$ estimate relies heavily on the weighted $L^2(\Omega)$ estimate and the positivity condition \eqref{eqn:positivity}. Note that such an analysis strategy could also be applied to other elliptic inverse problems \cite{hao2014finite,kohn1988variational} (when equipped with alternative conditions for removing the weighted function).
\end{remark}

\subsection{Parabolic inverse problem}

The next result gives a conditional stability estimate for the parabolic inverse problem.
\begin{theorem}
Suppose that $q_1,q_2\in K\cap H^1(\Omega)$ with $\|\nabla q_1\|_{L^2(\Omega)},\|\nabla q_2\|_{L^2(\Omega)}\leq c_q$, $u_0\in H^2(\Omega)\cap H_0^1(\Omega)$ and $f\in H^1(0,T;L^2(\Omega))$. Let $u(q_1)$ and $u(q_2)$ be the corresponding weak solutions of problem \eqref{equ:parabolic problem}. Then with $T_0\leq s\leq t\leq T$, there exists a constant $C$ depending on $c_q$ such that
\begin{equation*}
\int_{T_0}^T\int_{T_0}^t\int_{s}^t\|(q_1-q_2)u(q_1)(\xi)\|_{L^2(\Omega)}\ \mathrm{d}\xi\mathrm{d}s\mathrm{d}t\leq C\|u(q_1)-u(q_2)\|_{L^2(T_0,T;H^1(\Omega))}^\frac12.
\end{equation*}
Moreover, if there exists a $\beta\geq 0$ such that 
\begin{equation}\label{eqn:positivity-parab}
u(q_1)(x,t)\ge C\mathrm{dist}(x,\partial\Omega)^\beta \quad\mbox{a.e. in}\ \Omega
\end{equation}
for any $t\in(T_0,T)$, then there exists a constant $C$ depending on $c_q$ such that
\begin{equation*}
        \|q_1-q_2\|_{L^2(\Omega)}\leq C\|u(q_1)-u(q_2)\|_{L^2(T_0,T;H^1(\Omega))}^{\frac{1}{2(1+2\beta)}}.
    \end{equation*}
\end{theorem}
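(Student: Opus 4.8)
The plan is to transplant the purely variational argument of Theorem~\ref{thm:stab-ellipt} to the parabolic setting, compensating for the time-derivative term by an extra integration in time against a weight that vanishes at $T_0$ and $T$. Write $w:=u(q_1)-u(q_2)$ and, for $\xi\in(0,T)$, $g(\xi):=(q_1-q_2)u(q_1)(\xi)$. Subtracting the weak formulations of \eqref{equ:parabolic problem} for $q_1$ and $q_2$ and using $q_1u(q_1)-q_2u(q_2)=g+q_2w$, one finds that $w(\cdot,0)=0$, $w=0$ on $\partial\Omega\times(0,T)$, and for a.e.\ $\xi\in(0,T)$,
\begin{equation*}
(\partial_\xi w(\xi),\varphi)+(\nabla w(\xi),\nabla\varphi)+(q_2w(\xi),\varphi)=-(g(\xi),\varphi)\qquad\forall\,\varphi\in H_0^1(\Omega).
\end{equation*}
Since $u_0\in H^2(\Omega)\cap H_0^1(\Omega)$ and $f\in H^1(0,T;L^2(\Omega))$, standard parabolic regularity gives $u(q_1)\in C([0,T];H^2(\Omega)\cap H_0^1(\Omega))$ and $\partial_\xi u(q_1)\in C([0,T];L^2(\Omega))$; combined with $q_i\in K$, $\|\nabla q_i\|_{L^2(\Omega)}\le c_q$ and the embedding $H^2(\Omega)\hookrightarrow L^\infty(\Omega)$ valid for $d\le3$, this yields the uniform bounds $\|g(\xi)\|_{H^1(\Omega)}\le C$ and $\|\partial_\xi g(\xi)\|_{L^2(\Omega)}\le C$ for a.e.\ $\xi$; in particular $g(\xi)\in H_0^1(\Omega)$ is an admissible test function and $w,g\in C([T_0,T];L^2(\Omega))\cap H^1(T_0,T;L^2(\Omega))$.

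The crucial device is the weight $\psi(\xi):=(\xi-T_0)(T-\xi)$, which vanishes at $\xi=T_0$ and $\xi=T$ and satisfies $\|\psi\|_{L^\infty(T_0,T)}+\|\psi'\|_{L^\infty(T_0,T)}\le C$. Taking $\varphi=g(\xi)$ in the identity above, multiplying by $\psi(\xi)$, integrating over $(T_0,T)$, and then integrating the first term by parts in time (using $\frac{\mathrm{d}}{\mathrm{d}\xi}(w(\xi),g(\xi))=(\partial_\xi w(\xi),g(\xi))+(w(\xi),\partial_\xi g(\xi))$, the boundary contribution vanishing since $\psi(T_0)=\psi(T)=0$), one obtains
\begin{equation*}
\int_{T_0}^T\psi(\xi)\|g(\xi)\|_{L^2(\Omega)}^2\,\mathrm{d}\xi=\int_{T_0}^T\psi'(\xi)(w(\xi),g(\xi))\,\mathrm{d}\xi+\int_{T_0}^T\psi(\xi)(w(\xi),\partial_\xi g(\xi))\,\mathrm{d}\xi-\int_{T_0}^T\psi(\xi)\big((\nabla w(\xi),\nabla g(\xi))+(q_2w(\xi),g(\xi))\big)\,\mathrm{d}\xi.
\end{equation*}
Bounding each term on the right with the Cauchy--Schwarz inequality, the uniform bounds on $\|g(\xi)\|_{H^1(\Omega)}$ and $\|\partial_\xi g(\xi)\|_{L^2(\Omega)}$, and $\int_{T_0}^T\|w(\xi)\|_{L^2(\Omega)}\,\mathrm{d}\xi\le C\|w\|_{L^2(T_0,T;H^1(\Omega))}$, one arrives at the core estimate
\begin{equation}\label{eqn:stab-core-parab}
\int_{T_0}^T\psi(\xi)\|g(\xi)\|_{L^2(\Omega)}^2\,\mathrm{d}\xi\le C\|u(q_1)-u(q_2)\|_{L^2(T_0,T;H^1(\Omega))}.
\end{equation}

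Both assertions now follow from \eqref{eqn:stab-core-parab} exactly as in Theorem~\ref{thm:stab-ellipt}. By Fubini's theorem,
\begin{equation*}
\int_{T_0}^T\!\!\int_{T_0}^t\!\!\int_s^t\|g(\xi)\|_{L^2(\Omega)}\,\mathrm{d}\xi\,\mathrm{d}s\,\mathrm{d}t=\int_{T_0}^T(\xi-T_0)(T-\xi)\|g(\xi)\|_{L^2(\Omega)}\,\mathrm{d}\xi=\int_{T_0}^T\psi(\xi)\|g(\xi)\|_{L^2(\Omega)}\,\mathrm{d}\xi,
\end{equation*}
so the Cauchy--Schwarz inequality in $\xi$, the identity $\int_{T_0}^T\psi\,\mathrm{d}\xi=(T-T_0)^3/6$, and \eqref{eqn:stab-core-parab} give the first estimate with exponent $\tfrac12$. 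For the second, decompose $\Omega=\Omega_\rho\cup\Omega_\rho^c$ with $\Omega_\rho=\{x\in\Omega:\mathrm{dist}(x,\partial\Omega)\ge\rho\}$; on $\Omega_\rho$ the positivity condition \eqref{eqn:positivity-parab} gives, for every $\xi\in(T_0,T)$,
\begin{equation*}
\int_{\Omega_\rho}(q_1-q_2)^2\,\mathrm{d}x\le\rho^{-2\beta}\int_{\Omega_\rho}(q_1-q_2)^2\,\mathrm{dist}(x,\partial\Omega)^{2\beta}\,\mathrm{d}x\le C\rho^{-2\beta}\|g(\xi)\|_{L^2(\Omega)}^2,
\end{equation*}
and multiplying by $\psi(\xi)$, integrating over $(T_0,T)$ and invoking \eqref{eqn:stab-core-parab} bounds $\int_{\Omega_\rho}(q_1-q_2)^2\,\mathrm{d}x$ by $C\rho^{-2\beta}\|u(q_1)-u(q_2)\|_{L^2(T_0,T;H^1(\Omega))}$; the box constraint of $K$ gives $\int_{\Omega_\rho^c}(q_1-q_2)^2\,\mathrm{d}x\le C\rho$, and balancing the two contributions in $\rho$ (as in Theorem~\ref{thm:stab-ellipt}) yields the exponent $\tfrac{1}{2(1+2\beta)}$.

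I expect the main obstacle to be the time-derivative term $\int_{T_0}^T\psi(\xi)(\partial_\xi w(\xi),g(\xi))\,\mathrm{d}\xi$: it cannot be dominated pointwise in time by a power of $\|w\|_{L^2(T_0,T;H^1(\Omega))}$, so it must be removed by integration by parts in time, which is precisely why the weight $\psi$ is chosen to vanish at \emph{both} endpoints $T_0$ and $T$, and why the statement is phrased via the triple time integral --- this integral being exactly $\int_{T_0}^T\psi(\xi)\,\|g(\xi)\|_{L^2(\Omega)}\,\mathrm{d}\xi$. The remaining technical points are to justify this integration by parts and the product rule above (which is where the temporal regularity $w,g\in C([T_0,T];L^2(\Omega))\cap H^1(T_0,T;L^2(\Omega))$ is used) and, in the balancing step, the standard reduction to the case where $\|u(q_1)-u(q_2)\|_{L^2(T_0,T;H^1(\Omega))}$ is small, the estimate being otherwise immediate from the box constraint of $K$.
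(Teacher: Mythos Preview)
Your argument is correct and follows the same variational idea as the paper: test with $\varphi=g(\xi)=(q_1-q_2)u(q_1)(\xi)$, integrate by parts in time to dispose of the $\partial_\xi w$ term, and then balance near and away from the boundary for the second assertion. The difference is purely in packaging. The paper carries the triple integral $\int_{T_0}^T\!\int_{T_0}^t\!\int_s^t\cdots\,\mathrm{d}\xi\,\mathrm{d}s\,\mathrm{d}t$ through the computation, doing the $\xi$--integration by parts on $[s,t]$ and then estimating the resulting boundary terms $(w,\varphi)(t)-(w,\varphi)(s)$ after the extra $\mathrm{d}s\,\mathrm{d}t$ integrations. You instead observe via Fubini that this triple integral equals $\int_{T_0}^T\psi(\xi)\|g(\xi)\|_{L^2(\Omega)}\,\mathrm{d}\xi$ with $\psi(\xi)=(\xi-T_0)(T-\xi)$, and then integrate by parts once against the weight $\psi$, whose vanishing at both endpoints kills the boundary contribution outright. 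This is cleaner and makes the reason for the triple time integral completely transparent; it also yields the intermediate ``squared'' estimate \eqref{eqn:stab-core-parab} in one stroke, from which both conclusions follow by a single Cauchy--Schwarz step and the boundary-layer splitting. One small remark: your uniform-in-$\xi$ bound $\|\partial_\xi g(\xi)\|_{L^2(\Omega)}\le C$ is a little stronger than needed; the paper only uses $\partial_\xi g\in L^2(0,T;L^2(\Omega))$, which follows immediately from $u(q_1)\in H^1(0,T;L^2(\Omega))$ and the box constraint, and this suffices for the term $\int_{T_0}^T\psi(w,\partial_\xi g)\,\mathrm{d}\xi$.
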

\begin{proof}
By the weak formulations of $u(q_1)$ and $u(q_2)$, for any $\varphi\in H_0^1(\Omega)$
\begin{equation*}
    ((q_1-q_2)u(q_1),\varphi) = -(\partial_{\xi} u(q_1)-\partial_{\xi}u(q_2),\varphi) -(\nabla(u(q_1)-u(q_2)),\nabla\varphi)-(q_2(u(q_1)-u(q_2)),\varphi)=:{\rm I_1}+{\rm I_2}+{\rm I_3}.
    \end{equation*}
Let $\varphi=(q_1-q_2)u(q_1)$. Then $\nabla\varphi=(\nabla q_1-\nabla q_2)u(q_1)+(q_1-q_2)\nabla u(q_1)$.  By the standard parabolic regularity theory \cite{Evans2010PartialDE}, problem \eqref{equ:variational problem in parabolic system} has a unique solution $u(q_1)\in H^1(0,T;H_0^1(\Omega))\cap L^\infty(0,T;H^2(\Omega)) $, and then by Sobolev embedding theorem \cite{Adams2003Sobolev}, $u(q_1)\in L^\infty(0,T;L^\infty(\Omega))$. Then there holds $\|\varphi\|_{L^2(\Omega)}\leq C$ and $\|\nabla\varphi\|_{L^2(\Omega)}\leq C$. Thus we have $\varphi\in H_0^1(\Omega)$. Meanwhile, the Cauchy-Schwarz inequality and the box constraint yield
\begin{equation*}
\int_{T_0}^T|{\rm I_2}|\ \mathrm{d}t\leq C\|\nabla(u(q_1)-u(q_2))\|_{L^2(T_0,T;L^2(\Omega))}\quad\mbox{and}\quad
\int_{T_0}^T|{\rm I_3}| \mathrm{d}t\leq C\|u(q_1)-u(q_2)\|_{L^2(T_0,T;L^2(\Omega))}.
\end{equation*}
It remains to bound the term ${\rm I_1}$. By integration by parts, we have
\begin{align*}
    \int_{s}^t(\partial_\xi u(q_1)(\xi)-\partial_\xi u(q_2)(\xi),\varphi(\xi))\ \mathrm{d}\xi&=(u(q_1)(t)-u(q_2)(t),\varphi(t))-(u(q_1)(s)-u(q_2)(s),\varphi(s))\\
     &\quad-\int_{s}^t(u(q_1)(\xi)-u(q_2)(\xi),\partial_\xi\varphi(\xi))\ \mathrm{d}\xi.
\end{align*}
For the first two terms, by the Cauchy-Schwarz inequality, since $\|\varphi(t)\|_{L^2(\Omega)}\leq C$, we have 
\begin{align*}
    & \Big|\int_{T_0}^T\int_{T_0}^t(u(q_1)-u(q_2),\varphi)(t)-(u(q_1)-u(q_2),\varphi)(s)\ \mathrm{d}s\mathrm{d}t\Big| 
   \leq C\|u(q_1)-u(q_2)\|_{L^2(T_0,T;L^2(\Omega))}.
\end{align*}
Next by the regularity $u\in H^1(0,T;L^2(\Omega))$, and the box constraint in $K$, we have $\partial_\xi\varphi = (q_1-q_2)\partial_{\xi}u(q_1)(\xi)\in L^2(0,T;L^2(\Omega))$. Using  Cauchy-Schwarz inequality leads to
    \begin{align*}
    \Big|\int_{T_0}^T\int_{T_0}^t\int_s^t(u(q_1)(\xi)-u(q_2)(\xi),\partial_{\xi}\varphi(\xi))\ \mathrm{d}\xi\mathrm{d}s\mathrm{d}t\Big|
    &\leq C\Big(\int_{T_0}^T\|u(q_1)-u(q_2)\|^2_{L^2(\Omega)}\ \mathrm{d}t\Big)^{\frac{1}{2}}\Big(\int_{T_0}^T\|\partial_t\varphi\|^2_{L^2(\Omega)}\ \mathrm{d}t\Big)^{\frac{1}{2}} \\
    &\leq C\|u(q_1)-u(q_2)\|_{L^2(T_0,T;L^2(\Omega))}\|\partial_tu\|_{L^2(T_0,T;L^2(\Omega))}.
\end{align*}
These estimates directly imply the first desired assertion. Under the positivity condition \eqref{eqn:positivity-parab}, the $L^2(\Omega)$ estimate follows from the argument of Theorem \ref{thm:stab-ellipt}.
\end{proof}

The next result gives a sufficient condition on the positivity condition \eqref{eqn:positivity-parab}.
\begin{proposition}\label{propo: beta = 2 in para system}
Let $q\in H^1(\Omega)\cap K$, the source $f\geq c_f>0$ and $\partial_tf\leq 0$ a.e. in $\Omega\times(0,T)$, $u_0\geq 0$ and $f(0)+\Delta u_0-qu_0\leq 0$ a.e. in $\Omega$. Then there exists a positive constant $C$, depending only on $c_0$, $c_1$, $c_f$ and $\Omega$, such that the positivity condition \eqref{eqn:positivity-parab} holds with $\beta=2$.
\end{proposition}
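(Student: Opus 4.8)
The plan is to mimic the elliptic argument in Proposition \ref{propo: bete=2 in ellip}, but applied at each fixed time $t\in(T_0,T)$, and to use the hypotheses $\partial_t f\le 0$ and $f(0)+\Delta u_0 - q u_0\le 0$ to guarantee that, for each such $t$, the solution $u(q)(\cdot,t)$ dominates (pointwise, from below) a time-independent elliptic-type barrier. Concretely, first I would establish a monotonicity-in-time statement: setting $w=\partial_t u(q)$, the function $w$ solves the parabolic equation $\partial_t w - \Delta w + q w = \partial_t f$ with homogeneous Dirichlet boundary data and initial value $w(\cdot,0)=\partial_t u(q)(\cdot,0) = f(0)+\Delta u_0 - q u_0$. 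By hypothesis both the right-hand side $\partial_t f$ and the initial datum are $\le 0$ a.e., so the parabolic weak maximum principle gives $w=\partial_t u(q)\le 0$ a.e. in $\Omega\times(0,T)$; hence $u(q)(\cdot,t)$ is nonincreasing in $t$ and, for every $t\in(T_0,T)$, $u(q)(\cdot,t)\ge u(q)(\cdot,T)$.

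Next I would pin down a positive elliptic lower bound at the terminal time. Let $v:=u(q)(\cdot,T)$; since $u_0\ge 0$ and $f\ge c_f>0$, the parabolic maximum principle also gives $u(q)\ge 0$ throughout, and in fact $v\ge 0$. To get a quantitative bound I would instead compare $u(q)$ directly with the solution $\tilde u$ of the stationary problem $-\Delta \tilde u + q\tilde u = c_f$ in $\Omega$, $\tilde u=0$ on $\partial\Omega$: the function $e:=u(q)-\tilde u$ (extended in $t$ by $\tilde u$ being $t$-independent) satisfies $\partial_t e-\Delta e+qe = f-c_f\ge 0$ with $e(\cdot,0)=u_0-\tilde u$. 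If $u_0\ge \tilde u$ this already yields $u(q)\ge \tilde u$; in general one argues with the fact that, as $t\to\infty$, $u(q)(\cdot,t)\to \tilde u_{\text{limit}}$ solving $-\Delta \cdot + q\cdot = f$, which is $\ge \tilde u$, combined with the monotonicity just proved, so that for $t$ in a compatible range $u(q)(\cdot,t)\ge \tilde u$; the cleaner route, which I would adopt, is to observe that by $\partial_t u(q)\le 0$ the terminal value $v=u(q)(\cdot,T)$ is the smallest value over $[T_0,T]$, and to bound $v$ below via a sub-solution: the constant-in-time function $\underline u$ solving $-\Delta\underline u+q\underline u=c_f$ is a sub-solution of the parabolic problem precisely when $\underline u(\cdot,0)\le u_0$, which, using $f(0)+\Delta u_0-qu_0\le0$, i.e. $-\Delta u_0+qu_0\ge f(0)\ge c_f$, shows $u_0$ is itself a super-solution of the elliptic problem defining $\underline u$, hence $\underline u\le u_0$, closing the comparison and giving $u(q)(\cdot,t)\ge \underline u$ for all $t\in(0,T)$. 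Finally, by Theorem \ref{thm:Green} applied to the elliptic operator $-\Delta+qI$ (valid for $d\ge 2$, and for $d=1$ by the Remark), exactly as in Proposition \ref{propo: bete=2 in ellip} one obtains $\underline u(x)\ge C\,\mathrm{dist}(x,\partial\Omega)^2$, and therefore $u(q)(x,t)\ge C\,\mathrm{dist}(x,\partial\Omega)^2$ for a.e. $x\in\Omega$ and every $t\in(T_0,T)$, which is \eqref{eqn:positivity-parab} with $\beta=2$; tracking the constants shows $C$ depends only on $c_0,c_1,c_f$ and $\Omega$.

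The main obstacle is the bookkeeping around the comparison principles: one must justify that $\partial_t u(q)$ is a legitimate weak (super/sub)solution with the claimed initial trace $f(0)+\Delta u_0-qu_0$, which requires the regularity $u(q)\in H^1(0,T;H^1_0)\cap L^\infty(0,T;H^2)$ already quoted (so that $\Delta u_0-qu_0\in L^2(\Omega)$ makes sense) and a careful application of the weak maximum principle for parabolic equations with an $L^\infty$ zeroth-order coefficient $q\ge c_0\ge0$; and one must make the sub-solution comparison $\underline u\le u(q)$ rigorous at the level of weak formulations rather than pointwise. These are standard but must be stated; once they are in place, the reduction to the elliptic Green's function estimate of Theorem \ref{thm:Green} is immediate and mirrors the proof of Proposition \ref{propo: bete=2 in ellip} verbatim.
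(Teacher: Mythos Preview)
Your argument is correct, but the route differs from the paper's. The paper also derives $w=\partial_t u\le 0$ exactly as you do, but then, rather than introducing an auxiliary barrier $\underline u$, it simply freezes $t\in[T_0,T]$ and observes that $u(\cdot,t)$ solves the \emph{elliptic} problem
\[
-\Delta u(\cdot,t)+q\,u(\cdot,t)=f(\cdot,t)-w(\cdot,t)\ge c_f \quad\text{in }\Omega,\qquad u(\cdot,t)=0\ \text{on }\partial\Omega,
\]
and then applies the Green's function estimate of Theorem~\ref{thm:Green} directly to this equation, yielding $u(x,t)\ge C\,\mathrm{dist}(x,\partial\Omega)^2$ in one stroke. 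Your approach is slightly more indirect (one extra elliptic comparison to show $\underline u\le u_0$, then a parabolic comparison to get $u\ge\underline u$, then the Green's function bound on $\underline u$), but it has an interesting byproduct you did not notice: once you commit to the barrier $\underline u$, the hypothesis $\partial_t f\le 0$ is never used, and only the weaker consequence $-\Delta u_0+qu_0\ge c_f$ of the compatibility condition is needed; your preliminary step establishing $\partial_t u\le 0$ is in fact dispensable in your own argument. The paper's proof, by contrast, genuinely requires $\partial_t u\le 0$ (hence both $\partial_t f\le 0$ and the compatibility inequality) to make the frozen-time source nonnegative.
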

\begin{proof}
Since $f\geq0$ and $u_0\geq 0$, the standard parabolic maximum principle (see, e.g., \cite{Nirenberg:1953,Friedman:1958}) implies $u\geq 0$, a.e. in $\Omega\times[0,T].$
Let $w:=\partial_tu$, which satisfies \begin{equation*}
\left\{\begin{aligned}
	\partial_tw-\Delta w +qw &= \partial_tf, \ &\mbox{in}&\ \Omega\times(0,T), \\
	w&=0, \ &\mbox{on}&\ \partial\Omega\times(0,T), \\
	w(0)&=f(0)+\Delta u_0-q u_0, \ &\mbox{in}&\ \Omega.
\end{aligned}\right.
\end{equation*}
Since $\partial_tf\leq 0$ a.e. in $\Omega\times(0,T)$ and $f(0)+\Delta u_0-qu_0\leq 0$ a.e. in $\Omega$, the parabolic maximum principle yields $w\leq 0$ a.e. in $\Omega\times[0,T]$. It suffices to prove that \eqref{eqn:positivity-parab} with $\beta=2$ holds for any $t\in(0,T]$. By fixing $t\in [T_0,T]$, we have $f(t)-w(t)\in L^2(\Omega)$. Then consider the following elliptic problem
\begin{equation*}
    \left\{\begin{aligned}
		-\Delta u(t) +qu(t) &= f(t)-w(t), \ &\mbox{in}&\ \Omega, \\
		u(t)&=0, \ &\mbox{on}&\ \partial\Omega.
\end{aligned}\right.
\end{equation*}
By the property of Green's function $G_{q}(x,y)$ in Theorem \ref{thm:Green}, there holds 
\begin{align*}
    u(q)(x,t) &= \int_\Omega G_{q}(x,y)(f(y,t)-w(y,t)) {\rm d}y \geq \int_{B(x,\frac{\rho(x)}{2})}f(y,t)G_{q}(x,y) {\rm d} y \\
    &\geq c_fC\int_{B(x,\frac{\rho(x)}{2})}|x-y|^{-(d-2)} {\rm d}y
    \geq C\rho^2(x) = C\mathrm{dist}(x,\partial\Omega)^2,
\end{align*}
for any $x\in\Omega$ and $t\in[T_0,T]$, i.e., the positivity condition \eqref{eqn:positivity-parab} with $\beta=2$ holds.
\end{proof}

\section{Error analysis for the elliptic inverse problem}
\label{sec:err-ell}

Now we formulate the regularized output least-squares formulation for the elliptic inverse problem, discretize the continuous formulation by the Galerkin FEM with continuous piecewise linear elements, and provide a complete error analysis. 

\subsection{Regularization problem and its FEM approximation}
\label{subsec:Tikhonov regularization problem and its FEM approximation in elliptic system}
To reconstruct the coefficient $q$, we
employ the standard Tikhonov regularization with an $H^1(\Omega)$ seminorm penalty \cite{Engl1996Regularization,ItoJin:2015}, minimizes the following regularized functional:
\begin{equation}\label{equ:Tikhonov problem in elliptic system}
	\min_{q\in K} J_{\alpha}(q)=\frac{1}{2}\|u(q)-z^{\delta}\|^2_{L^2(\Omega)}+\frac{\alpha}{2}\|\nabla q\|_{L^2(\Omega)}^2,
\end{equation}
where $u(q)\in H^1_0(\Omega)$ satisfies
\begin{equation}\label{equ:variational problem in elliptic system}	
   (\nabla u(q),\nabla\varphi)+(qu(q),\varphi)=(f,\varphi), \quad\forall \varphi\in H^1_0(\Omega).
\end{equation}
Recall that the given data $z^\delta\in L^2(\Omega)$ is noisy with a noise level $\delta$ relative to the exact data $u(q^\dag)$ (corresponding to the exact radiativity $q^\dag$), i.e.,
$\|u(q^\dag)-z^\delta\|_{L^2(\Omega)}= \delta.$ The continuous problem \eqref{equ:Tikhonov problem in elliptic system}--\eqref{equ:variational problem in elliptic system} 
is well-posed in the sense that it has at least one global minimizer, and the minimizer is continuous with respect to the perturbations in the data, and further as the noise level tends to zero, the sequence of minimizers converges to the exact solution in $H^1(\Omega)$ (if $\alpha$ is chosen properly) \cite{EnglKunischNeubauer:1989,Engl1996Regularization,ItoJin:2015}.

To discretize problem \eqref{equ:Tikhonov problem in elliptic system}--\eqref{equ:variational problem in elliptic system}, we employ the standard Galerkin FEM \cite{Brenner2002The}. Let $\mathcal{T}_h$ be a shape regular quasi-uniform simplicial triangulation of the domain $\Omega$, with a grid size $h$. On the triangulation $\mathcal{T}_h$, we define the conforming piecewise linear finite element spaces $V_h$ and $V_{h0}$ by
\begin{equation*}
	V_{h}:=
	\{v_{h}\in H^1(\Omega):	v_{h}|_{T}\text{ is a linear polynomial},\, \forall T\in\mathcal{T}_h
	\}
\end{equation*}
and $V_{h0}:=V_h\cap H_0^1(\Omega)$. We use the spaces $V_{h0}$ and $V_h$ to approximate the state $u$ and the parameter $q$, respectively. The following inverse inequality holds in the finite element space $V_{h0}$ \cite{Brenner2002The}
\begin{equation}\label{inequ:inverse inequality}
	\|v_h\|_{H^1(\Omega)}\leq  Ch^{-1}\|v_h\|_{L^2(\Omega)},\quad \forall v_h\in V_{h0}.
\end{equation}
We denote by $P_h$ the standard $L^2(\Omega)$-projection operator associated with the finite element space $V_{h0}$. Then it is known that for $s=1,2$ \cite{Ciarlet1991BasicEE,Thome2006GalerkinFE}:
\begin{align}
\label{inequ:P_h}
\|v-P_hv\|_{L^2(\Omega)}+h\|\nabla(v-P_hv)\|_{L^2(\Omega)}&\leq  Ch^s\|v\|_{H^s(\Omega)}, \quad \forall v\in H^s(\Omega)\cap H^1_0(\Omega).
\end{align}
Let $\Pi_h$ be the Lagrange interpolation operator associated with the finite element space $V_h$. It satisfies the following error estimate for $s=1,2$ and $1 \le p\le \infty$ (with $sp>d$):
\begin{align}
  \|v-\Pi_hv\|_{L^p(\Omega)} + h\|v-\Pi_hv\|_{W^{1,p}(\Omega)} & \leq ch^s \|v\|_{W^{s,p}(\Omega)}, \quad \forall v\in W^{s,p}(\Omega). \label{inequ:pi_h on H^2}
\end{align}

Now we can state the finite element approximation of problem \eqref{equ:Tikhonov problem in elliptic system}-\eqref{equ:variational problem in elliptic system}:
\begin{equation}\label{equ:discrete minimizing problem in elliptic system}
	\min_{q_h\in K_h} J_{\alpha,h}(q_h)=\frac{1}{2}\|u_h(q_h)-z^{\delta}\|^2_{L^2(\Omega)}+\frac{\alpha}{2}\|\nabla q_h\|_{L^2(\Omega)}^2,
\end{equation}
with $K_h=K \cap V_h$, and $u_h(q_h)\in V_{h0}$ satisfying
\begin{equation}\label{equ:finite element problem in elliptic system}	
	(\nabla u_h(q_h),\nabla\varphi_h)+(q_hu_h(q_h),\varphi_h)=(f,\varphi_h), \quad\forall \varphi_h\in V_{h0}.
\end{equation}
The discrete problem is well-posed: there exists at least one global minimizer $q_h^*\in K_h$ to problem \eqref{equ:discrete minimizing problem in elliptic system}--\eqref{equ:finite element problem in elliptic system}, and it depends continuously on the data perturbation. The main objective of this work is to bound the error $q_h^*-q^\dag$ of the approximation $q_h^*$.

\subsection{Error estimates}
Now we derive a weighted $L^2(\Omega)$ estimate of the error $q^\dag-q_h^*$, under the following assumption. By the standard elliptic regularity theory \cite{Evans2010PartialDE} and Sobolev embedding theorem  \cite{Adams2003Sobolev}, Assumption \ref{assum:Assumption in elliptic system} implies $u(q^{\dagger})\in H^2(\Omega)\hookrightarrow L^\infty(\Omega)$, for $d=1,2,3$.

\begin{assumption}\label{assum:Assumption in elliptic system}
$q^{\dagger}\in H^2(\Omega)\cap K$ and $f\in L^2(\Omega)$.
\end{assumption}


Next we give the main result of this section, i.e., a novel weighted $L^2(\Omega)$ error estimate.
\begin{theorem}\label{thm:error estimates of q-q_h with weight}
Let Assumption \ref{assum:Assumption in elliptic system} be fulfilled. Let $q^{\dagger}\in K$ be the exact potential, $u(q^{\dagger})$ be the solution of problem \eqref{equ:variational problem in elliptic system}, and $q_h^*\in K_h$ be a minimizer of problem \eqref{equ:discrete minimizing problem in elliptic system}-\eqref{equ:finite element problem in elliptic system}. Then with $\eta=h^2+\delta+\alpha^{\frac{1}{2}}$, there holds
	\begin{equation}\label{inequ:error estimates of q-q_h}
		\|(q^{\dagger}-q_h^*)u(q^\dagger)\|_{L^2(\Omega)}\leq  C(h^{\frac{1}{2}}+\alpha^{\frac{1}{4}}+\min(h^\frac12+h^{-\frac{1}{2}}\eta^{\frac{1}{2}},1))\alpha^{-\frac{1}{4}}\eta^{\frac{1}{2}},
	\end{equation}
	where the constant $C$ depends only on $q^\dagger$.
\end{theorem}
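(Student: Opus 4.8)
The plan is to adapt the variational argument of Theorem~\ref{thm:stab-ellipt} to the discrete setting, replacing the continuous solutions $u(q_1),u(q_2)$ by $u(q^\dagger)$ and $u_h(q_h^*)$, and extracting the needed bound on $\|u(q^\dagger)-u_h(q_h^*)\|_{H^1(\Omega)}$ from the minimality of $q_h^*$. First I would record the two ingredients that the minimality and the finite element machinery supply. From $J_{\alpha,h}(q_h^*)\le J_{\alpha,h}(\Pi_h q^\dagger)$ (using the Lagrange interpolant, or $P_h q^\dagger$, as a competitor in $K_h$), together with the a priori estimate $\|u(q^\dagger)-u_h(\Pi_h q^\dagger)\|_{L^2(\Omega)}\le C(h^2+\|q^\dagger-\Pi_h q^\dagger\|_{L^2(\Omega)})\le Ch^2$ and $\|q^\dagger-\Pi_h q^\dagger\|_{H^1(\Omega)}\le Ch\|q^\dagger\|_{H^2(\Omega)}$, one gets the two bounds
\begin{equation*}
\|u_h(q_h^*)-z^\delta\|_{L^2(\Omega)}\le C\eta,\qquad \alpha^{1/2}\|\nabla q_h^*\|_{L^2(\Omega)}\le C\eta,
\end{equation*}
with $\eta=h^2+\delta+\alpha^{1/2}$; combined with $\|u(q^\dagger)-z^\delta\|_{L^2(\Omega)}=\delta$ these give $\|u(q^\dagger)-u_h(q_h^*)\|_{L^2(\Omega)}\le C\eta$ and a uniform bound $\|\nabla q_h^*\|_{L^2(\Omega)}\le C\alpha^{-1/2}\eta$, which will play the role of the a priori bound $c_q$ on $\|\nabla q_i\|_{L^2}$ in the stability theorem (this is why the constant in \eqref{inequ:error estimates of q-q_h} ends up depending on $\alpha^{-1/4}\eta^{1/2}$).

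Next I would run the test-function argument. Writing $e:=u(q^\dagger)-u_h(q_h^*)$ and testing the weak form \eqref{equ:variational problem in elliptic system} for $u(q^\dagger)$ against $\varphi:=(q^\dagger-q_h^*)u(q^\dagger)$, while testing the discrete equation \eqref{equ:finite element problem in elliptic system} against $P_h\varphi\in V_{h0}$ and subtracting, yields
\begin{equation*}
((q^\dagger-q_h^*)u(q^\dagger),\varphi)= -(\nabla e,\nabla\varphi)-(q_h^* e,\varphi) + \big[(\nabla u_h(q_h^*),\nabla(\varphi-P_h\varphi))+(q_h^* u_h(q_h^*),\varphi-P_h\varphi)-(f,\varphi-P_h\varphi)\big].
\end{equation*}
The first two terms are controlled by $\|e\|_{H^1(\Omega)}\,(\|\varphi\|_{L^2}+\|\nabla\varphi\|_{L^2})$ exactly as in Theorem~\ref{thm:stab-ellipt}; here I use $u(q^\dagger)\in H^2(\Omega)\hookrightarrow L^\infty(\Omega)$ from Assumption~\ref{assum:Assumption in elliptic system} and the bound $\|\nabla\varphi\|_{L^2}\le C(\|\nabla q^\dagger\|_{L^2}+\|\nabla q_h^*\|_{L^2})\le C(1+\alpha^{-1/2}\eta^{1/2}\cdot\alpha^{-1/4}\eta^{1/2})$-type estimate — more precisely $\|\nabla\varphi\|_{L^2}\le C(1+\|\nabla q_h^*\|_{L^2})\le C\alpha^{-1/4}\eta^{1/2}\cdot(\dots)$, which is the source of the prefactor. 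The bracketed consistency term is where the FEM projection error enters: using \eqref{inequ:P_h} it is bounded by $Ch\|\varphi\|_{H^1(\Omega)}$ times $H^1$-type norms of $u_h(q_h^*)$ and $\|f\|$, but since $u_h(q_h^*)$ has only an $h$-dependent $H^1$ bound via the inverse inequality \eqref{inequ:inverse inequality}, this is the term that forces the $\min(h^{1/2}+h^{-1/2}\eta^{1/2},1)$ factor: one estimates the consistency term two ways — either trivially by $C\|\varphi\|_{H^1}$ (the "$1$" branch), or by $Ch\|\varphi\|_{H^1}\cdot h^{-1}\|u_h(q_h^*)-u(q^\dagger)\|_{L^2}^{1/2}\cdot(\dots)$ giving the $h^{-1/2}\eta^{1/2}$ branch, and takes the smaller.

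Finally I would collect terms: the left side is $\|(q^\dagger-q_h^*)u(q^\dagger)\|_{L^2(\Omega)}^2$, the right side is bounded by $C(\text{prefactor})\cdot\|u(q^\dagger)-u_h(q_h^*)\|_{H^1(\Omega)}$; bounding $\|e\|_{H^1(\Omega)}\le C\alpha^{-1/2}\eta$ (from $\|e\|_{L^2}\le C\eta$, the inverse inequality on $u_h(q_h^*)-P_h u(q^\dagger)$, and interpolation/a priori estimates, or directly from the uniform gradient bound on $q_h^*$), taking square roots, and carefully tracking the $\alpha$-powers produces the stated bound $C(h^{1/2}+\alpha^{1/4}+\min(h^{1/2}+h^{-1/2}\eta^{1/2},1))\alpha^{-1/4}\eta^{1/2}$. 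I expect the main obstacle to be the bookkeeping around the term $(\nabla e,\nabla\varphi)$ and the consistency term: $\nabla\varphi$ contains the factor $\nabla q_h^*$ which is only bounded by $C\alpha^{-1/2}\eta$ (not uniformly), so one must be careful that this does not blow up faster than advertised, and the two-sided estimate of the consistency term (trivial bound versus the $h^{-1/2}$-bound using the inverse inequality together with $\|u(q^\dagger)-u_h(q_h^*)\|_{L^2}\le C\eta$) has to be done so that the exponents combine into exactly $\alpha^{-1/4}\eta^{1/2}$ after taking the square root. The regularity input $q^\dagger\in H^2(\Omega)$ is used only to guarantee $u(q^\dagger)\in L^\infty(\Omega)$ and to have $O(h)$ (indeed $O(h^{1/2})$ suffices) interpolation error for $q^\dagger$, so the argument is robust.
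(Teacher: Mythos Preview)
Your approach is essentially the paper's: the first paragraph reproduces Lemma~\ref{lemma:error estimate of u(q)-u_h(q_h^*) and grad q_h^* in L^2}, and the rest is the same test-function argument with $\varphi=(q^\dagger-q_h^*)u(q^\dagger)$. The paper's decomposition is a bit cleaner than yours: it tests the continuous equation directly with $P_h\varphi$ rather than with $\varphi$, so that instead of your bracketed consistency term one gets the single residual $I_1=((q^\dagger-q_h^*)u(q^\dagger),\varphi-P_h\varphi)$, bounded at once by $Ch\|\nabla\varphi\|_{L^2(\Omega)}\le Ch\alpha^{-1/2}\eta$ via \eqref{inequ:P_h}. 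Your splitting is equivalent but forces you to handle $(\nabla u_h(q_h^*),\nabla(\varphi-P_h\varphi))$, which is where your description goes off track (``$u_h(q_h^*)$ has only an $h$-dependent $H^1$ bound'' is false --- it has a uniform $H^1$ bound from the discrete problem; to see the gain you should integrate by parts after writing $\nabla u_h(q_h^*)=\nabla u(q^\dagger)-\nabla e$).

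One genuine bookkeeping error: you do \emph{not} have $\|e\|_{H^1(\Omega)}\le C\alpha^{-1/2}\eta$, and trying to use this would give the wrong final exponents. The actual bound is $\|\nabla e\|_{L^2(\Omega)}\le C\min(h+h^{-1}\eta,1)$, obtained either from $\|\nabla(u(q^\dagger)-P_hu(q^\dagger))\|+h^{-1}\|P_hu(q^\dagger)-u_h(q_h^*)\|_{L^2}$ or from the trivial a priori bound; this is precisely what you identified earlier as the source of the $\min$ factor. The term $(\nabla e,\nabla\varphi)$ is then bounded by $\|\nabla e\|_{L^2}\cdot\|\nabla\varphi\|_{L^2}\le C\min(h+h^{-1}\eta,1)\cdot\alpha^{-1/2}\eta$, i.e.\ the two factors multiply rather than add, and the lower-order term $(q_h^*e,\varphi)$ contributes only $C\eta=C\alpha^{1/2}\cdot\alpha^{-1/2}\eta$. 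Summing and taking a square root then gives the stated bound.
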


The proof employs two preliminary estimates.
\begin{lemma}\label{lem:error estimates of u(q)-u(pi_hq)}
	Let Assumption \ref{assum:Assumption in elliptic system} be fulfilled. Then there holds
	\begin{equation}\label{inequ:error estimates of u(q)-u(pi_hq) in H^1}
		\|u(q^{\dagger})-u_h(\Pi_hq^{\dagger})\|_{L^2(\Omega)} + h \|\nabla (u(q^{\dagger})-u_h(\Pi_hq^{\dagger}))\|_{L^2(\Omega)}\leq  Ch^2.
	\end{equation}
\end{lemma}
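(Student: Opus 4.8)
The plan is to prove the $H^1$-type error estimate \eqref{inequ:error estimates of u(q)-u(pi_hq) in H^1} by splitting the error $u(q^\dagger)-u_h(\Pi_h q^\dagger)$ into a PDE-coefficient perturbation part and a finite element discretization part. First I would introduce the intermediate function $u(\Pi_h q^\dagger)$, the exact weak solution of \eqref{equ:variational problem in elliptic system} with the perturbed coefficient $\Pi_h q^\dagger$ in place of $q^\dagger$, and write
\begin{equation*}
u(q^\dagger)-u_h(\Pi_h q^\dagger)=\bigl(u(q^\dagger)-u(\Pi_h q^\dagger)\bigr)+\bigl(u(\Pi_h q^\dagger)-u_h(\Pi_h q^\dagger)\bigr).
\end{equation*}
For the first (coefficient-perturbation) term, subtract the two weak formulations to get, for all $\varphi\in H_0^1(\Omega)$,
\begin{equation*}
(\nabla(u(q^\dagger)-u(\Pi_h q^\dagger)),\nabla\varphi)+(\Pi_h q^\dagger\,(u(q^\dagger)-u(\Pi_h q^\dagger)),\varphi)=-((q^\dagger-\Pi_h q^\dagger)u(q^\dagger),\varphi),
\end{equation*}
test with $\varphi=u(q^\dagger)-u(\Pi_h q^\dagger)$, use the nonnegativity $\Pi_h q^\dagger\ge c_0\ge 0$ (note $K_h\subset K$, so the box constraint survives the Lagrange interpolation) and coercivity of the $H^1_0$ seminorm via Poincaré, and bound the right-hand side by $\|q^\dagger-\Pi_h q^\dagger\|_{L^2(\Omega)}\|u(q^\dagger)\|_{L^\infty(\Omega)}\|\varphi\|_{L^2(\Omega)}$. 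Since Assumption \ref{assum:Assumption in elliptic system} gives $q^\dagger\in H^2(\Omega)\hookrightarrow W^{1,p}$ for the relevant $p$ and $u(q^\dagger)\in L^\infty(\Omega)$, the interpolation estimate \eqref{inequ:pi_h on H^2} yields $\|q^\dagger-\Pi_h q^\dagger\|_{L^2(\Omega)}\le Ch^2\|q^\dagger\|_{H^2(\Omega)}$, hence this term is $O(h^2)$ in $H^1(\Omega)$ (and a fortiori in $L^2(\Omega)$).

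For the second (Galerkin discretization) term, I would invoke the standard Céa-type and Aubin–Nitsche finite element estimates for the linear elliptic problem with the fixed coefficient $\Pi_h q^\dagger\in K$: since the bilinear form $a_q(v,w):=(\nabla v,\nabla w)+(qv,w)$ with $q=\Pi_h q^\dagger$ is bounded and coercive on $H_0^1(\Omega)$ uniformly in $h$ (the constants depend only on $c_0,c_1$ and $\Omega$, not on the particular $\Pi_h q^\dagger$), and since $\Omega$ is convex polyhedral so that elliptic $H^2$-regularity holds with $\|u(\Pi_h q^\dagger)\|_{H^2(\Omega)}\le C\|f\|_{L^2(\Omega)}$ (again with $C$ independent of $h$ because the coefficient stays in the fixed compact range $[c_0,c_1]$), the usual duality argument gives
\begin{equation*}
\|u(\Pi_h q^\dagger)-u_h(\Pi_h q^\dagger)\|_{L^2(\Omega)}+h\|\nabla(u(\Pi_h q^\dagger)-u_h(\Pi_h q^\dagger))\|_{L^2(\Omega)}\le Ch^2\|u(\Pi_h q^\dagger)\|_{H^2(\Omega)}\le Ch^2\|f\|_{L^2(\Omega)}.
\end{equation*}
Combining the two parts by the triangle inequality gives \eqref{inequ:error estimates of u(q)-u(pi_hq) in H^1}, with $C$ depending only on $q^\dagger$ (through $\|q^\dagger\|_{H^2(\Omega)}$, $c_0$, $c_1$) and on $f$ and $\Omega$.

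The main subtlety — not a deep obstacle but the point that needs care — is ensuring that all constants are genuinely independent of $h$. This hinges on the fact that $\Pi_h q^\dagger$ remains in the admissible set $K$ (so its lower bound $c_0$ and upper bound $c_1$ are $h$-independent), which keeps both the coercivity constant of $a_{\Pi_h q^\dagger}$ and the $H^2$-regularity constant for $u(\Pi_h q^\dagger)$ uniform; one should note in passing that the Lagrange interpolant of a function in $[c_0,c_1]$ still takes values in $[c_0,c_1]$ because nodal values are preserved and piecewise linears are convex combinations of them. A second minor point is that $u(q^\dagger)\in L^\infty(\Omega)$ must be quantified by $\|u(q^\dagger)\|_{L^\infty(\Omega)}\le C\|u(q^\dagger)\|_{H^2(\Omega)}\le C\|f\|_{L^2(\Omega)}$ via Sobolev embedding in $d\le 3$, which is exactly the consequence of Assumption \ref{assum:Assumption in elliptic system} noted right before the theorem. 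Everything else is a routine assembly of textbook finite element estimates.
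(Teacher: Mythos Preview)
Your proof is correct, but it uses a different splitting from the paper. You insert the continuous solution $u(\Pi_h q^\dagger)$ with the perturbed coefficient and then apply the Galerkin estimate to that problem; the paper instead inserts the discrete solution $u_h(q^\dagger)$ with the exact coefficient, first invoking the standard C\'ea/duality estimate for $u(q^\dagger)-u_h(q^\dagger)$ and then bounding the purely discrete difference $w_h=u_h(\Pi_h q^\dagger)-u_h(q^\dagger)$ by testing its discrete equation with $w_h$ itself. Both routes ultimately rely on $\Pi_h q^\dagger\in K$ (the paper uses the lower bound $c_0$ implicitly when it writes $\|\nabla w_h\|^2+c_0\|w_h\|^2\le Ch^2\|w_h\|$); the main extra ingredient in your version is the uniform $H^2$-regularity bound $\|u(\Pi_h q^\dagger)\|_{H^2(\Omega)}\le C\|f\|_{L^2(\Omega)}$, which you correctly justify via the box constraint, while the paper only ever invokes $H^2$-regularity for $u(q^\dagger)$ and so avoids this step. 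The paper's choice is thus marginally more economical, whereas yours is perhaps more transparent since each half is a textbook estimate applied verbatim; neither offers a real advantage in strength or generality here.
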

\begin{proof}
C\'{e}a's Lemma and the standard duality argument imply
\begin{equation}\label{eqn:esti-ellip-1}
	\|u(q^{\dagger})-u_h(q^{\dagger})\|_{L^2(\Omega)} + h \|\nabla(u(q^{\dagger})-u_h(q^{\dagger}))\|_{L^2(\Omega)}  \le Ch^2 \|  u(q^\dagger) \|_{H^2\II}\leq Ch^2.
	\end{equation}
Then it suffices to bound $w_h = u_h(\Pi_h q^{\dagger})-u_h(q^{\dagger})$. Clearly, $w_h$ satisfies for any $\varphi_h \in V_{h0}$
\begin{align*}
 & (\nabla w_h, \nabla \varphi_h ) + ((\Pi_h q^\dagger) w_h , \varphi_h) = (( q^{\dagger}-\Pi_h q^{\dagger} ) u_h(q^{\dagger}) , \varphi_h) \\
  = & (( q^{\dagger}-\Pi_h q^{\dagger} ) u(q^{\dagger}) , \varphi_h) + (( q^{\dagger}-\Pi_h q^{\dagger} ) (u(q^{\dagger}) - u_h(q^\dagger)), \varphi_h)
 =:   {\rm I}.
 \end{align*}
 Letting $\varphi_h = w_h$, the estimate \eqref{eqn:esti-ellip-1}, and the approximation property of $\Pi_h$ in  \eqref{inequ:pi_h on H^2} give
 \begin{equation*} 
|{\rm I}| \le C h^2 \big( \| q \|_{H^2\II} \|  u(q) \|_{L^\infty\II} \|  w_h \|_{L^2\II} +  \| q \|_{L^\infty \II} \|  u(q) \|_{H^2\II} \|  w_h \|_{L^2\II} \big).
 \end{equation*}
Consequently, we have
\begin{equation*} 
 \|  \nabla w_h  \|_{L^2\II}^2 + c_0   \|   w_h \|_{L^2\II}^2 \le C h^2 \|  w_h \|_{L^2\II}.
  \end{equation*} 
Since $c_0\geq 0$, this and Poincare's inequality lead to 
 \begin{equation*}
 \|  \nabla w_h  \|_{L^2\II}  \le C h^2.
\end{equation*}
This, Poincar\'{e}'s inequality, \eqref{eqn:esti-ellip-1} and the triangle inequality imply the assertion.
\end{proof}

The next result gives a crucial a priori bound on $\|\nabla q_h^*\|_{L^2(\Omega)}$ and state the approximation error $\|u(q^\dag)-u_h(q_h^*)\|_{L^2(\Omega)}$. Note that this estimate allows bounding $\|\nabla q_h^*\|_{L^2(\Omega)}$ a priori by $C\alpha^{-\frac12}(h^2+\delta+\alpha^\frac12)$, where the constant $C$ depends only on the a priori regularity of the exact potential $q^\dag$. This estimate shows explicitly the delicate interplay of the parameters $h$, $\alpha$ and $\delta$ and it will play a central role in the error analysis below.
\begin{lemma}\label{lemma:error estimate of u(q)-u_h(q_h^*) and grad q_h^* in L^2}
     Let the assumption in Theorem \ref{thm:error estimates of q-q_h with weight} be fulfilled. Then the following estimate holds
     \begin{equation}\label{inequ:error estimate of u(q)-u_h(q_h^*) and grad q_h^* in L^2}
     	\|u(q^\dagger)-u_h(q_h^*)\|_{L^2(\Omega)}+\alpha^{\frac{1}{2}}\|\nabla q_h^*\|_{L^2(\Omega)}\leq C(h^2+\delta+\alpha^{\frac{1}{2}}).
     \end{equation}
\end{lemma}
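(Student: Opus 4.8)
The plan is to run the standard quasi-optimality argument for discretized Tikhonov regularization, exploiting the minimizing property of $q_h^*$ against a carefully chosen admissible competitor. The natural competitor is the Lagrange interpolant $\Pi_h q^\dagger$. First I would verify that $\Pi_h q^\dagger\in K_h$: since $q^\dagger\in K$ takes values in $[c_0,c_1]$, its nodal values lie in $[c_0,c_1]$, and on each simplex a continuous piecewise linear function is a convex combination of its nodal values, so $c_0\le \Pi_h q^\dagger\le c_1$ a.e.\ in $\Omega$; hence $\Pi_h q^\dagger\in K\cap V_h=K_h$. Consequently the minimality of $q_h^*$ for $J_{\alpha,h}$ gives $J_{\alpha,h}(q_h^*)\le J_{\alpha,h}(\Pi_h q^\dagger)$.

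Next I would estimate the right-hand side $J_{\alpha,h}(\Pi_h q^\dagger)$. For the fidelity term, the triangle inequality together with Lemma \ref{lem:error estimates of u(q)-u(pi_hq)} and the noise level identity $\|u(q^\dagger)-z^\delta\|_{L^2(\Omega)}=\delta$ yield
\[
\|u_h(\Pi_h q^\dagger)-z^\delta\|_{L^2(\Omega)}\le \|u_h(\Pi_h q^\dagger)-u(q^\dagger)\|_{L^2(\Omega)}+\|u(q^\dagger)-z^\delta\|_{L^2(\Omega)}\le C(h^2+\delta).
\]
For the penalty term, Assumption \ref{assum:Assumption in elliptic system} gives $q^\dagger\in H^2(\Omega)$, so the interpolation estimate \eqref{inequ:pi_h on H^2} (with $s=2$, $p=2$) implies $\|\nabla \Pi_h q^\dagger\|_{L^2(\Omega)}\le \|\nabla q^\dagger\|_{L^2(\Omega)}+Ch\|q^\dagger\|_{H^2(\Omega)}\le C$. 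Plugging these two bounds into $J_{\alpha,h}(q_h^*)\le J_{\alpha,h}(\Pi_h q^\dagger)$ produces
\[
\tfrac12\|u_h(q_h^*)-z^\delta\|_{L^2(\Omega)}^2+\tfrac{\alpha}{2}\|\nabla q_h^*\|_{L^2(\Omega)}^2\le C(h^2+\delta)^2+C\alpha,
\]
from which both $\|u_h(q_h^*)-z^\delta\|_{L^2(\Omega)}\le C(h^2+\delta+\alpha^{1/2})$ and $\alpha^{1/2}\|\nabla q_h^*\|_{L^2(\Omega)}\le C(h^2+\delta+\alpha^{1/2})$ follow by taking square roots.

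Finally, one more triangle inequality, $\|u(q^\dagger)-u_h(q_h^*)\|_{L^2(\Omega)}\le \|u(q^\dagger)-z^\delta\|_{L^2(\Omega)}+\|z^\delta-u_h(q_h^*)\|_{L^2(\Omega)}\le \delta+C(h^2+\delta+\alpha^{1/2})$, combined with the bound on $\alpha^{1/2}\|\nabla q_h^*\|_{L^2(\Omega)}$, gives the claimed estimate. I do not expect a genuine obstacle here: the argument is entirely standard, and the only steps that merit a line of justification are the admissibility $\Pi_h q^\dagger\in K_h$ (nodal interpolation preserves pointwise box constraints) and the uniform bound on $\|\nabla \Pi_h q^\dagger\|_{L^2(\Omega)}$, which rests on the $H^2$-regularity of $q^\dagger$ and hence on Assumption \ref{assum:Assumption in elliptic system}.
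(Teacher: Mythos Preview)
Your proposal is correct and follows essentially the same route as the paper: compare $q_h^*$ against the admissible competitor $\Pi_h q^\dagger$, bound the fidelity term via Lemma~\ref{lem:error estimates of u(q)-u(pi_hq)} and the noise level, bound the penalty term by the $H^1$-stability of $\Pi_h$, and finish with a triangle inequality. The only cosmetic differences are that you spell out why $\Pi_h q^\dagger\in K_h$ (the paper takes this for granted) and you bound $\|\nabla\Pi_h q^\dagger\|_{L^2(\Omega)}$ directly via the interpolation estimate with $s=2$, whereas the paper phrases the same bound as following from \eqref{inequ:pi_h on H^2} together with the inverse inequality.
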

\begin{proof}
Since $q_h^*$ is the minimizer of the system \eqref{equ:discrete minimizing problem in elliptic system}-\eqref{equ:finite element problem in elliptic system} and $\Pi_hq^\dagger\in K_h$, we have $$J_{\alpha,h}(q_h^*)\leq J_{\alpha,h}(\Pi_hq^\dagger).$$ 
Then the definition of $\delta$ and Lemma \ref{lem:error estimates of u(q)-u(pi_hq)} imply
    \begin{align*}
		\|u_h(q_h^*)-z^\delta\|_{L^2(\Omega)}^2&+\alpha\|\nabla q_h^*\|_{L^2(\Omega)}^2\leq \|u_h(\Pi_hq^\dagger)-z^\delta\|_{L^2(\Omega)}^2
		+\alpha\|\nabla \Pi_hq^\dagger\|_{L^2(\Omega)}^2\\
		\leq & C(\|u_h(\Pi_hq^\dagger)-u(q^\dagger)\|_{L^2(\Omega)}^2+\|u(q^\dagger)-z^\delta\|_{L^2(\Omega)}^2+\alpha \|\nabla \Pi_hq^\dagger\|_{L^2(\Omega)}^2)\\
		\leq &C(h^4+\delta^2+\alpha),
	\end{align*}
where the last line follows from the inequality \eqref{inequ:pi_h on H^2} and the inverse inequality \eqref{inequ:inverse inequality}, i.e., $\|\nabla\Pi_hq^\dag\|_{L^2(\Omega)}\leq C\| q^\dag\|_{H^1(\Omega)}$. Then by the triangle inequality, we deduce
   \begin{align*}
   	\|u(q^\dagger)-u_h(q_h^*)\|^2_{L^2(\Omega)}+\alpha\|\nabla q_h^*\|^2_{L^2(\Omega)}&\leq  C(\|u(q^\dagger)-z^\delta\|_{L^2(\Omega)}^2+\|u_h(q_h^*)-z^\delta\|_{L^2(\Omega)}^2+\alpha\|\nabla q_h^*\|_{L^2(\Omega)}^2)\\
   	&\leq  C(h^4+\delta^2+\alpha).
   \end{align*} 
This completes the proof of the lemma. 
\end{proof}

Now we can prove Theorem \ref{thm:error estimates of q-q_h with weight}, which relies heavily on the novel test function $\varphi=(q^\dag-q_h^*)u(q^\dag)$, and the overall strategy is inspired by the conditional stability analysis in Section \ref{sec:stab}.
\begin{proof}
For any $\varphi\in H_0^1(\Omega)$, it follows from the weak formulations of $u(q^\dag)$ and $u_h(q_h^*)$ that
\begin{equation*}
\begin{aligned}
	((q^\dagger-q_h^*)u(q^\dagger),\varphi)
	&=((q^\dagger-q_h^*)u(q^\dagger),\varphi-P_h\varphi)+((q^\dagger-q_h^*)u(q^\dagger),P_h\varphi)\\
	&=((q^\dagger-q_h^*)u(q^\dagger),\varphi-P_h\varphi)+(\nabla(u_h(q_h^*)-u(q^\dagger)),\nabla P_h\varphi)+(q_h^*(u_h(q_h^*)-u(q^\dagger)),P_h\varphi)\\
	&=: {\rm I}_1+{\rm I}_2+{\rm I}_3.
\end{aligned}
\end{equation*}
Let $\varphi=(q^\dagger-q_h^*)u(q^\dagger)$. Then direct computation gives
$\nabla\varphi = u(q^\dagger)(\nabla q^\dagger -\nabla q_h^*)+(q^\dagger-q^*_h)\nabla u(q^\dagger).$
By Assumption \ref{assum:Assumption in elliptic system} and the a priori regularity $u(q^\dag)\in H^2(\Omega)\hookrightarrow L^\infty(\Omega)$ for $d=1,2,3$, we have 
$$\|  \varphi \|_{L^2\II} \le C\quad\text{and}\quad \|\nabla\varphi\|_{L^2(\Omega)}\leq  C(1+\|\nabla q_h^*\|_{L^2(\Omega)}).$$
Thus, $\varphi\in H_0^1(\Omega)$. We bound the three terms. 
By Lemma \ref{lemma:error estimate of u(q)-u_h(q_h^*) and grad q_h^* in L^2} and \eqref{inequ:P_h}, we bound the term ${\rm I}_1$ by
\begin{equation*}
	|{\rm I}_1|\leq  Ch\|\nabla\varphi\|_{L^2(\Omega)}\leq  Ch(1+\|\nabla q_h^*\|_{L^2(\Omega)})\leq  Ch(1+\alpha^{-\frac{1}{2}}\eta)\leq  Ch\alpha^{-\frac{1}{2}}\eta,
\end{equation*}
where the constant $C$ depends on $q^\dagger$. By applying Lemma  \ref{lemma:error estimate of u(q)-u_h(q_h^*) and grad q_h^* in L^2} and \eqref{inequ:P_h} again and also the inverse inequality \eqref{inequ:inverse inequality}, the term ${\rm I}_2$ can be bounded by
\begin{equation*}
\begin{aligned}
	|{\rm I}_2|&\leq \|\nabla(u(q^\dagger)-u_h(q_h^*))\|_{L^2(\Omega)}\|\nabla\varphi\|_{L^2(\Omega)} \\
	&\leq  (\|\nabla(u(q^\dagger)- P_hu(q^\dagger))\|_{L^2(\Omega)}+Ch^{-1}\|P_hu(q^\dagger)-u_h(q_h^*)\|_{L^2(\Omega)})\|\nabla\varphi\|_{L^2(\Omega)} \\
	&\leq  C(h+h^{-1}\|u(q^\dagger)-u_h(q_h^*)\|_{L^2(\Omega)})\|\nabla\varphi\|_{L^2(\Omega)}  \\
	&\leq  C(h+h^{-1}\eta)(1+\|\nabla q_h^*\|_{L^2(\Omega)})\leq  C(h+h^{-1}\eta)\alpha^{-\frac{1}{2}}\eta. 
\end{aligned}
\end{equation*}
Meanwhile, by the a priori estimate, we have 
$$\|\nabla(u(q^\dag)-u_h(q_h^*))\|_{L^2(\Omega)}\leq C.$$ 
Thus, we obtain
$$|{\rm I}_2| \leq  C\min(h+h^{-1}\eta,1)\alpha^{-\frac{1}{2}}\eta.$$

Finally, the bound on the term ${\rm I}_3$ follows from Lemma \ref{lemma:error estimate of u(q)-u_h(q_h^*) and grad q_h^* in L^2} and the $L^2(\Omega)$ stability of $P_h$ by
\begin{equation*}
	|{\rm I}_3|\leq  C \| P_h\varphi  \|_{L^2(\Omega)} \|u(q^\dagger)-u_h(q_h^*)\|_{L^2(\Omega)}\leq  C\eta.
\end{equation*}
Then the desired estimate follows from the bounds on ${\rm I}_i$.
\end{proof}

From Theorem \ref{thm:error estimates of q-q_h with weight}, we can derive two $L^2(\Omega)$ estimates on the error $q^\dagger-q_h^*$. First, we give an interior $L^2$-error estimate, by means of maximum principle.
\begin{corollary}\label{coro:interior error estimates in $L^2$-norm of elliptic problem}
Let the assumptions in Theorem \ref{thm:error estimates of q-q_h with weight} be fulfilled and the source $f\not\equiv0$ be nonnegative a.e. in $\Omega$. Then for any compact subset $\Omega'\subset\subset\Omega$ with {\rm dist}$(\overline{\Omega'},\partial\Omega)>0$, there exists a positive constant $C$, depending on {\rm dist}$(\overline{\Omega'},\partial\Omega)$ and $q^\dagger$, such that
\begin{equation*}
	\|q^\dagger-q^*_h\|_{L^2(\Omega')}\leq  C(h^{\frac{1}{2}}+\alpha^{\frac{1}{4}}+\min(h^\frac12+ h^{-\frac{1}{2}}\eta^{\frac{1}{2}},1))\alpha^{-\frac{1}{4}}\eta^{\frac{1}{2}}.
\end{equation*}
\end{corollary}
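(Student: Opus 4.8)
The plan is to deduce the interior estimate directly from the weighted bound in Theorem~\ref{thm:error estimates of q-q_h with weight} by dividing out the weight $u(q^\dagger)$ on the compactly contained subdomain $\Omega'$. Concretely, if one can exhibit a strictly positive lower bound $m_{\Omega'}:=\inf_{\overline{\Omega'}}u(q^\dagger)>0$, then pointwise a.e.\ on $\Omega'$ we have $|q^\dagger-q_h^*|\le m_{\Omega'}^{-1}\,|(q^\dagger-q_h^*)u(q^\dagger)|$, whence
\begin{equation*}
\|q^\dagger-q_h^*\|_{L^2(\Omega')}\le m_{\Omega'}^{-1}\|(q^\dagger-q_h^*)u(q^\dagger)\|_{L^2(\Omega')}\le m_{\Omega'}^{-1}\|(q^\dagger-q_h^*)u(q^\dagger)\|_{L^2(\Omega)},
\end{equation*}
and the claimed estimate follows at once from Theorem~\ref{thm:error estimates of q-q_h with weight}, the constant $C$ simply picking up the extra factor $m_{\Omega'}^{-1}$.

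The remaining task is therefore to establish the positive lower bound for $u(q^\dagger)$ away from $\partial\Omega$. Under Assumption~\ref{assum:Assumption in elliptic system} and $d\le 3$ we have $u(q^\dagger)\in H^2(\Omega)\hookrightarrow C(\overline\Omega)$, so it suffices to show that $u(q^\dagger)>0$ pointwise in the open connected set $\Omega$; the continuous function $u(q^\dagger)$ then attains a positive minimum on the compact set $\overline{\Omega'}$, giving $m_{\Omega'}>0$. Since $f\ge 0$ a.e., $q^\dagger\ge c_0\ge 0$, and $u(q^\dagger)=0$ on $\partial\Omega$, the weak maximum principle yields $u(q^\dagger)\ge 0$ in $\Omega$. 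If $u(q^\dagger)$ vanished at some interior point, then, being a supersolution of $-\Delta v+q^\dagger v=0$ that attains the nonpositive value $0$ at an interior minimum, the strong maximum principle would force $u(q^\dagger)\equiv 0$, contradicting $f\not\equiv 0$. Hence $u(q^\dagger)>0$ in $\Omega$.

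The only genuinely nontrivial ingredient is thus the qualitative positivity $u(q^\dagger)>0$; the rest is bookkeeping. I would also make the dependence of $C$ on $\mathrm{dist}(\overline{\Omega'},\partial\Omega)$ explicit: since $f\not\equiv 0$, there is a ball $B\subset\Omega$ with $\int_B f>0$, and the Green's function representation
\begin{equation*}
u(q^\dagger)(x)=\int_\Omega G_{q^\dagger}(x,y)f(y)\,{\rm d}y\ \ge\ \int_B G_{q^\dagger}(x,y)f(y)\,{\rm d}y
\end{equation*}
combined with the lower bound in Theorem~\ref{thm:Green} and a Harnack chain argument (as in the proof of Theorem~\ref{thm:Green}) produces a lower bound for $m_{\Omega'}$ depending only on $\mathrm{dist}(\overline{\Omega'},\partial\Omega)$ (and on $q^\dagger$, $f$, $\Omega$). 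If in addition $f\ge c_f>0$, then Proposition~\ref{propo: bete=2 in ellip} already gives $u(q^\dagger)(x)\ge C\,\mathrm{dist}(x,\partial\Omega)^2$, so that $m_{\Omega'}\ge C\,\mathrm{dist}(\overline{\Omega'},\partial\Omega)^2$, which is the sharp quantitative form of the constant's dependence.
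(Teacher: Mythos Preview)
Your proof is correct and follows the same overarching scheme as the paper: establish strict interior positivity of the weight and then divide it out of the weighted bound in Theorem~\ref{thm:error estimates of q-q_h with weight}. The only difference is in how positivity is obtained. You apply the strong maximum principle directly to $u(q^\dagger)$ (valid here since $q^\dagger\in L^\infty$ and $u(q^\dagger)\in H^2$, so the weak-solution strong maximum principle applies). The paper instead introduces the auxiliary solution $w$ of the problem with $q$ replaced by the fixed constant $c_1$, shows $u(q^\dagger)\ge w\ge 0$ via the weak maximum principle, and then invokes V\'azquez's result to get $w\ge C>0$ on $\Omega'$. The paper's detour buys a little more: the interior lower bound on $w$ depends only on $c_1$, $f$, $\Omega$ and $\mathrm{dist}(\overline{\Omega'},\partial\Omega)$, not on the particular $q^\dagger$, so the constant $m_{\Omega'}^{-1}$ is uniform over the admissible set $K$. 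Your direct argument yields a constant that a priori depends on $q^\dagger$, which is all the corollary claims, so both proofs are adequate; your route is simply the more economical one.
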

\begin{proof}
Let $w\in H^2(\Omega)\cap H^1_0(\Omega)$ be the solution of problem \eqref{equ:variational problem in elliptic system} with $q$ replaced by $c_1$. Since $f\geq 0$ a.e. in  $\Omega$, by the maximum principle \cite{Brezis2010FunctionalAS}, $u(q^\dag)\geq0$. Note that $u(q^\dag)-w$ satisfies
\begin{equation*}
    (\nabla (u(q^\dag)-w),\nabla \varphi) + (q^\dag(u(q^\dag)-w),\varphi) = ((c_1-q^\dag)u(q^\dag),\varphi),\quad \forall \varphi\in H_0^1(\Omega).
\end{equation*}
By the maximum principle \cite{Brezis2010FunctionalAS}, $u(q^\dag)-w\geq 0$ a.e. in $\Omega$.
Meanwhile, by Sobolev embedding theorem \cite{Adams2003Sobolev}, $H^2(\Omega)\hookrightarrow C(\overline{\Omega})$, we have $u(q^\dagger)\geq  w\geq  0 \ \mbox{in}\ \Omega$. Then by Theorem \ref{thm:error estimates of q-q_h with weight}
\begin{equation*}
 \|(q^\dagger-q^*_h)w\|_{L^2(\Omega)}\leq  C(h^{\frac{1}{2}}+\alpha^{\frac{1}{4}}+\min(h^\frac12+h^{-\frac{1}{2}}\eta^{\frac{1}{2}},1))\alpha^{-\frac{1}{4}}\eta^{\frac{1}{2}}.
\end{equation*}
Note that for any $\Omega'\subset\subset \Omega$, by \cite[Theorem 1]{Vzquez1984ASM} and $w\in C(\overline{\Omega})$, there exists a positive constant $C$ depending on {\rm dist}$(\overline{\Omega'},\partial\Omega)$ such that $w\geq  C>0$ in $\Omega'$. The desired estimate follows directly.
\end{proof}

The next result gives an $L^2(\Omega)$ error estimate under the positivity condition \eqref{eqn:positivity}.
\begin{corollary}\label{coro:standard L2 in ellip}
Let the conditions in Theorem \ref{thm:error estimates of q-q_h with weight} be fulfilled, and condition \eqref{eqn:positivity} holds. Then there holds 
    \begin{equation*}
      \|q^\dagger-q_h^*\|_{L^2(\Omega)}\leq C ((h^{\frac{1}{2}}+\alpha^{\frac{1}{4}}+\min(h^\frac12+h^{-\frac{1}{2}}\eta^{\frac{1}{2}},1))\alpha^{-\frac{1}{4}}\eta^{\frac{1}{2}})^\frac{1}{1+2\beta}.
    \end{equation*}
\end{corollary}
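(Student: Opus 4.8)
The plan is to transplant the domain-splitting argument from the second half of the proof of Theorem~\ref{thm:stab-ellipt} to the present discrete setting, now feeding in the weighted error bound of Theorem~\ref{thm:error estimates of q-q_h with weight} in place of the weighted conditional stability estimate. It is convenient to abbreviate the right-hand side of \eqref{inequ:error estimates of q-q_h} by
\[
  M := \big(h^{\frac12}+\alpha^{\frac14}+\min(h^{\frac12}+h^{-\frac12}\eta^{\frac12},1)\big)\alpha^{-\frac14}\eta^{\frac12},
\]
so that Theorem~\ref{thm:error estimates of q-q_h with weight} reads $\|(q^\dagger-q_h^*)u(q^\dagger)\|_{L^2(\Omega)}\le CM$ and the target estimate is $\|q^\dagger-q_h^*\|_{L^2(\Omega)}\le CM^{1/(1+2\beta)}$.

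First I would fix a parameter $\rho>0$, to be optimized at the end, and split $\Omega=\Omega_\rho\cup\Omega_\rho^c$ with $\Omega_\rho=\{x\in\Omega:\mathrm{dist}(x,\partial\Omega)\ge\rho\}$ and $\Omega_\rho^c=\Omega\setminus\Omega_\rho$. On $\Omega_\rho$ the positivity condition \eqref{eqn:positivity} yields $1\le C\rho^{-2\beta}\mathrm{dist}(x,\partial\Omega)^{2\beta}\le C\rho^{-2\beta}u(q^\dagger)(x)^2$, and integrating this against $(q^\dagger-q_h^*)^2$ together with Theorem~\ref{thm:error estimates of q-q_h with weight} gives $\int_{\Omega_\rho}(q^\dagger-q_h^*)^2\,\mathrm{d}x\le C\rho^{-2\beta}M^2$. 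On the boundary layer $\Omega_\rho^c$ I would use only the box constraint of $K$, which bounds $|q^\dagger-q_h^*|$ pointwise, together with the elementary volume estimate $|\Omega_\rho^c|\le C\rho$ for a convex polyhedral domain, to get $\int_{\Omega_\rho^c}(q^\dagger-q_h^*)^2\,\mathrm{d}x\le C\rho$. Summing the two contributions gives $\|q^\dagger-q_h^*\|_{L^2(\Omega)}^2\le C(\rho^{-2\beta}M^2+\rho)$.

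The final step is to balance the two terms by choosing $\rho\sim M^{2/(1+2\beta)}$, which produces $\|q^\dagger-q_h^*\|_{L^2(\Omega)}^2\le CM^{2/(1+2\beta)}$ and hence the claim after taking square roots. I expect the only points requiring care, rather than a genuine obstacle, to be the degenerate regimes: when $\beta=0$ the hypothesis already furnishes a uniform lower bound $u(q^\dagger)\ge C>0$ and the estimate follows directly from Theorem~\ref{thm:error estimates of q-q_h with weight} with no splitting at all; and when $M$ is large enough that the balancing value of $\rho$ would exceed the inradius of $\Omega$, the asserted bound is weaker than the a priori bound $\|q^\dagger-q_h^*\|_{L^2(\Omega)}\le c_1-c_0$ coming from the box constraint, so the generic constant $C$ can simply be enlarged to absorb that case. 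All the substantive work—the weighted error estimate and the quantitative near-boundary decay of $u(q^\dagger)$—is already in hand, so this corollary is essentially the interpolation trade-off between the interior weight blow-up $\rho^{-2\beta}$ and the shrinking boundary layer $|\Omega_\rho^c|\lesssim\rho$.
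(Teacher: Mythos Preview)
Your proposal is correct and follows exactly the approach the paper intends: the paper's own proof simply reads ``The proof is identical with that in Theorem~\ref{thm:stab-ellipt}, and hence omitted,'' i.e., precisely the domain-splitting and balancing argument you describe, with Theorem~\ref{thm:error estimates of q-q_h with weight} supplying the weighted bound. Your handling of the degenerate cases ($\beta=0$ and large $M$) is more explicit than the paper's, but the substance is the same.
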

\begin{proof}
The proof is identical with that in Theorem \ref{thm:stab-ellipt}, and hence omitted.
\end{proof}

\begin{remark}\label{remark:index match in elliptic system}
The error estimates provide useful guidelines for choosing the algorithmic parameters, e.g., $\alpha$ and $h$, in order to achieve the best possible convergence rates of the discrete approximations $q_h^*$ in terms of the noise level $\delta$. Indeed, by properly balancing the terms in the upper bound, we should choose $\alpha\sim\delta^2$ and $h\sim\delta^{\frac{1}{2}}$ so as to obtain the following interior estimates:  $\|q^{\dagger}-q_h^*\|_{L^2(\Omega')}\leq  C\delta^{\frac{1}{4}}.$
Similarly, there holds
$\| q_h - q^\dagger  \|_{L^2(\Omega)}  \le C \delta^{\frac{1}{4(1+2\beta)}},$
under the positivity condition \eqref{eqn:positivity}. Note that the choice $\alpha\sim \delta^2$ contrasts sharply with that in standard regularization theory \cite{Engl1996Regularization,ItoJin:2015} which typically assumes a slower decay than $\delta^2$, but it is actually the most common choice when using conditional stability estimates \cite{ChengYamamoto:2000,EggerHofmann:2018}. It is instructive to compare the rate with the conditional stability estimate in Theorem \ref{thm:stab-ellipt}. Specifically, by the a prior regularity estimate $u(q)\in H^2(\Omega)$ for $q\in K\cap H^1(\Omega)$ and the
Gagliardo-Nirenberg interpolation inequality 
$$ \|v\|_{H^1(\Omega)}\leq C\|v\|^{\frac{1}{2}}_{L^2(\Omega)}\|v\|^{\frac{1}{2}}_{H^2(\Omega)},$$ 
we have
\begin{equation*}
\|q_1-q_2\|_{L^2(\Omega)}\leq C\|u(q_1)-u(q_2)\|_{L^2(\Omega)}^\frac{1}{4(1+2\beta)},\quad \forall q_1,q_2\in K\cap H^1(\Omega).    
\end{equation*} 
Thus the error estimate in Corollary \ref{coro:standard L2 in ellip} is consistent with the conditional stability estimate in Theorem \ref{thm:stab-ellipt}.
\end{remark}

\section{Error analysis for the parabolic inverse problem}
\label{sec:err-para}

Now we turn to the convergence analysis for parabolic systems.
\subsection{Regularized formulation and its FEM approximation}
\label{subsec:Tikhonov regularization problem and its FEM approximation in parabolic system}

Like the elliptic case in Section \ref{sec:err-ell}, we consider the inverse problem of recovering a space-dependent potential $q$ from the following noisy distributed observation $z^\delta$
\begin{equation*}
    z^\delta(x,t) = u(q^\dag)(x,t) + \xi(x,t),\quad \mbox{in }\Omega\times(T_0,T), 
\end{equation*}
where $0\leq T_0<T$, and the accuracy of $z^\delta$ is measured by the noise level $\delta$, defined by 
$\delta = \|u(q) -z^\delta \|_{L^2(T_0,T;L^2(\Omega))}.$
To reconstruct the potential $q$ from the data $z^\delta$, we employ the standard Tikhonov regularization, which minimizes the following regularized functional:
\begin{equation}\label{equ:Tikhonov problem in parabolic system}
\min_{q\in K} J_{\alpha}(q)=\frac{1}{2}\|u(q)(t)-z^{\delta}(t)\|^2_{L^2(T_0,T;L^2(\Omega))}+\frac{\alpha}{2}\|\nabla q\|_{L^2(\Omega)}^2,
\end{equation}
where $u(t)\equiv u(q)(t)\in H^1_0(\Omega)$ with $u(0)=u_0$ satisfies 
\begin{equation}\label{equ:variational problem in parabolic system}	
	(\partial_tu(t),\varphi)+(\nabla u(t),\nabla\varphi)+(qu(t),\varphi)=(f,\varphi), \quad\forall \varphi\in H^1_0(\Omega),\ \mbox{a.e.}\ t\in(0,T).
\end{equation}
Similar to the elliptic case, the well-posedness of the continuous formulation \eqref{equ:Tikhonov problem in parabolic system}--\eqref{equ:variational problem in parabolic system} can be shown easily using the direct method in calculus of variation \cite{YamamotoZou:2001}.

For the spatial discretization of problem \eqref{equ:Tikhonov problem in parabolic system}--\eqref{equ:variational problem in parabolic system}, we apply the Galerkin FEM described in Section \ref{sec:err-ell}, i.e., to approximate the state variable $u\in H^1_0(\Omega)$ by $V_{h0}$ and the unknown potential $q$ by the space $V_h$. For the time discretization, we employ the backward Euler method on a uniform time grid. We divide the time interval $(0,T)$ into $N$ equal subintervals with a time step size $\tau$ and the time grids $t^n=n\tau$, $n=0,\ldots,N$. Further, we denote by $v^n=v(t^n)$ and define the backward difference quotient $\partial_\tau v^n$ and and the piecewise constant $L^2$-projection in the cell $(t^{n-1},t^n)$ by
$$\partial_\tau v^n:=\tau^{-1}(v^n-v^{n-1})\quad  \mbox{and}\quad \bar{v}^n:=\tau^{-1}\int_{t^{n-1}}^{t^n}v(t)\ \mathrm{d}t,$$ 
respectively. Throughout, we assume that $T_0$ is at a grid point, with $N_0=T_0/\tau$ for some $N_0\in\mathbb{N}$ (which depends on the step size $\tau$).

Then the fully discrete problem for problem \eqref{equ:Tikhonov problem in parabolic system}-\eqref{equ:variational problem in parabolic system} reads:
\begin{equation}\label{equ:discrete minimizing problem in parabolic system}
	\min_{q_h\in K_h} J_{\alpha,h,\tau}(q_h)=\frac{\tau}{2}\sum_{n=N_{0}}^{N}\|u^n_h(q_h)-z_n^{\delta}\|^2_{L^2(\Omega)}+\frac{\alpha}{2}\|\nabla q_h\|_{L^2(\Omega)}^2,
\end{equation}
with $z_n^\delta=\tau^{-1}\int_{t_{n-1}}^{t_n} z^\delta(t) \mathrm{d}t$, $K_h = K\cap V_h$, 
where $u_h^n \equiv u^n_h(q_h)\in V_{h0}$ satisfies $u_h^0(q_h)=P_hu_0$ and
\begin{equation}\label{equ:finite element problem in parabolic system}	
	(\partial_\tau u_h^n,\varphi_h)+(\nabla u^n_h,\nabla\varphi_h)+(q_hu^n_h,\varphi_h)=(f^n,\varphi_h), \quad\forall \varphi_h\in V_{h0},\ n=1,\ldots,N.
\end{equation}
Problem \eqref{equ:discrete minimizing problem in parabolic system}---\eqref{equ:finite element problem in parabolic system} is a finite-dimensional constrained optimization problem, and the existence of a minimizer $q_h^*$ follows easily in view of the norm equivalence in finite-dimensional spaces. Below we give an error analysis of the approximation $q_h^*$.

In the analysis, we use extensively the Ritz projection operator $R_h: H^1_0(\Omega)\to V_{h0}$ defined by
\begin{equation*}
	(\nabla u(q),\nabla \varphi_h)+(qu(q),\varphi_h)=(\nabla R_hu(q),\nabla\varphi_h)+(qR_hu(q),\varphi_h),\quad \forall\varphi_h\in V_{h0}.
\end{equation*}
The following approximation result holds \cite{Thome2006GalerkinFE}:
\begin{equation}\label{inequ:R_h on H^2}
	\|v-R_hv\|_{L^2(\Omega)}+h\|v-R_hv\|_{H^1(\Omega)}\leq  Ch^2\|v\|_{H^2(\Omega)}, \quad\forall v\in H^2(\Omega)\cap H^1_0(\Omega).
\end{equation}

\subsection{Error estimates}
First, we derive a weighted $L^2(\Omega)$-error estimates of the approximation $q_h^*$. Throughout we make following assumption on the problem data. Under Assumption \ref{assum:Assumption in parabolic system}, by the standard parabolic regularity theory \cite{Evans2010PartialDE}, the weak solution $u(q^\dag)\in L^2(0,T;H^1_0(\Omega)) \cap H^1(0,T;H^{-1}(\Omega))$ to  problem \eqref{equ:variational problem in parabolic system} belongs to $H^1(0,T;H_0^1(\Omega))\cap L^\infty(0,T;H^2(\Omega)) $, and then by Sobolev embedding theorem \cite{Adams2003Sobolev}, $u(q^\dag)\in L^\infty(0,T;L^\infty(\Omega))$.

\begin{assumption}\label{assum:Assumption in parabolic system}
$q^{\dagger}\in H^2(\Omega)\cap K$, \ $u_0\in H^2(\Omega)\cap H_0^1(\Omega)$ and $f\in H^1(0,T;L^2(\Omega))$.
\end{assumption}

Now we can state the main result of this section, i.e., a weighted $L^2(\Omega)$ error estimate.
\begin{theorem}\label{thm:error estimates of q-q_h with weight in parabolic system}
Let Assumption \ref{assum:Assumption in parabolic system} be fulfilled. Let $q^{\dagger}\in K$ be the exact coefficient, $u(q^{\dagger})$ be the solution of problem \eqref{equ:variational problem in parabolic system}, and $q_h^*\in K_h$ be a minimizer of problem \eqref{equ:discrete minimizing problem in parabolic system}-\eqref{equ:finite element problem in parabolic system}. Then with $\eta=\tau+h^2+\delta+\alpha^{\frac{1}{2}}$, there exists a constant $C$ depending on $q^\dagger$ such that
		\begin{equation}\label{equ:error estimates of q-q_h in parabolic system}
		\tau^3\sum_{j=N_{0}+1}^{N}\sum_{i=N_{0}+1}^{j}\sum_{n=i}^{j}\|(q^{\dagger}-q_h^*)u^n(q^\dagger)\|_{L^2(\Omega)}\leq  C({\tau^{\frac{1}{2}}}+{h^{\frac{1}{2}}}+{\alpha^{\frac{1}{4}}}+{\min(h^{\frac{1}{2}}+h^{-\frac{1}{2}}\eta^{\frac{1}{2}},1)})\alpha^{-\frac{1}{4}}\eta^{\frac{1}{2}}.
		\end{equation}
\end{theorem}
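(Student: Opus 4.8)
The plan is to mimic the elliptic proof of Theorem \ref{thm:error estimates of q-q_h with weight}, adapting the key test-function trick to the fully discrete parabolic setting. First I would establish the two parabolic analogues of the preliminary lemmas: (i) a standard finite element / backward Euler error estimate showing $\|u^n(q^\dagger) - u_h^n(R_h\Pi_h q^\dagger)\|_{L^2(\Omega)} \le C(\tau + h^2)$ uniformly in $n$, obtained by splitting into the spatial Ritz-projection error (controlled by \eqref{inequ:R_h on H^2}), the time-stepping error, and the perturbation $w_h^n = u_h^n(\Pi_h q^\dagger) - u_h^n(q^\dagger)$ coming from replacing $q^\dagger$ by $\Pi_h q^\dagger$ in the discrete equation; and (ii) an a priori bound $\|u(q^\dagger) - u_h(q_h^*)\|_{L^2(T_0,T;L^2(\Omega))} + \alpha^{1/2}\|\nabla q_h^*\|_{L^2(\Omega)} \le C\eta$ with $\eta = \tau + h^2 + \delta + \alpha^{1/2}$, proved exactly as in Lemma \ref{lemma:error estimate of u(q)-u_h(q_h^*) and grad q_h^* in L^2} by using the minimizing property $J_{\alpha,h,\tau}(q_h^*) \le J_{\alpha,h,\tau}(\Pi_h q^\dagger)$ together with step (i) and the inverse inequality to control $\|\nabla \Pi_h q^\dagger\|_{L^2(\Omega)}$.

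Next, fixing $t = t^n$ for $n$ between $N_0$ and $N$, I would write the weak identity for $((q^\dagger - q_h^*)u^n(q^\dagger), \varphi)$ by subtracting the continuous equation \eqref{equ:variational problem in parabolic system} at $t^n$ from the discrete equation \eqref{equ:finite element problem in parabolic system} after inserting $P_h\varphi$. This produces the spatial-projection term $((q^\dagger - q_h^*)u^n(q^\dagger), \varphi - P_h\varphi)$, a diffusion term $(\nabla(u_h^n(q_h^*) - u^n(q^\dagger)), \nabla P_h\varphi)$, a reaction term $(q_h^*(u_h^n(q_h^*) - u^n(q^\dagger)), P_h\varphi)$, and crucially a time-derivative term involving $(\partial_\tau u_h^n(q_h^*) - \partial_t u^n(q^\dagger), P_h\varphi)$ (plus a consistency term from $\partial_t u^n$ versus $\bar\partial_t u^n$). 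Choosing the nonstandard test function $\varphi = (q^\dagger - q_h^*)u^n(q^\dagger)$, and noting that Assumption \ref{assum:Assumption in parabolic system} gives $u(q^\dagger) \in L^\infty(0,T;L^\infty(\Omega))$ and $u(q^\dagger) \in H^1(0,T;H_0^1(\Omega))$, I get $\|\varphi\|_{L^2(\Omega)} \le C$ and $\|\nabla\varphi\|_{L^2(\Omega)} \le C(1 + \|\nabla q_h^*\|_{L^2(\Omega)})$. The first three terms are then bounded exactly as ${\rm I}_1, {\rm I}_2, {\rm I}_3$ in the elliptic proof, using \eqref{inequ:P_h}, the inverse inequality, and the a priori bound, yielding contributions of order $h\alpha^{-1/2}\eta$, $\min(h + h^{-1}\eta, 1)\alpha^{-1/2}\eta$, and $\eta$ respectively.

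The main obstacle is the time-derivative term: $\partial_\tau u_h^n(q_h^*) - \partial_t u^n(q^\dagger)$ need not be small pointwise in $n$, so one cannot bound it directly. This is exactly why the left-hand side of \eqref{equ:error estimates of q-q_h in parabolic system} carries the triple sum $\tau^3 \sum_{j}\sum_{i \le j}\sum_{n: i \le n \le j}$, which is the discrete analogue of the triple time-integral $\int_{T_0}^T\!\int_{T_0}^t\!\int_s^t$ appearing in the parabolic conditional stability theorem. The trick is summation by parts: after summing $(\partial_\tau u_h^n(q_h^*) - \partial_\tau u^n(q^\dagger), \varphi^n)$ over $n$ from $i$ to $j$ (with $\varphi^n = (q^\dagger - q_h^*)u^n(q^\dagger)$), the telescoping produces boundary terms $(u^j - u_h^j, \varphi^j)$ and $(u^i - u_h^i, \varphi^i)$ plus an interior sum $\tau\sum_n (u^n(q^\dagger) - u_h^n(q_h^*), \partial_\tau\varphi^n)$; since $\partial_\tau\varphi^n = (q^\dagger - q_h^*)\partial_\tau u^n(q^\dagger)$ and $\partial_t u(q^\dagger) \in L^2(0,T;L^2(\Omega))$, the box constraint on $K$ makes $\{\partial_\tau\varphi^n\}$ bounded in the discrete $L^2(0,T;L^2(\Omega))$ norm. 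Summing once more in $i$ and once more in $j$ and applying Cauchy–Schwarz, every piece is controlled by $\|u(q^\dagger) - u_h(q_h^*)\|_{L^2(T_0,T;L^2(\Omega))} \le C\eta$ times $(1 + \|\nabla q_h^*\|_{L^2(\Omega)}) \le C\alpha^{-1/2}\eta$. I would also need to absorb the consistency error between $\partial_t u^n$ and the difference quotient (or between $f^n$, $\bar f^n$ and $f(t^n)$), which contributes an $O(\tau)$ term handled by the regularity $f \in H^1(0,T;L^2(\Omega))$ and $u(q^\dagger) \in H^1(0,T;H^2(\Omega))$. Collecting all contributions and factoring out $\alpha^{-1/4}\eta^{1/2}$ exactly as in the elliptic case gives the claimed bound; the $\tau^{1/2}$ term in the prefactor comes from the time-consistency error combined with the $\alpha^{-1/4}$ normalization, paralleling the $h^{1/2}$ term.
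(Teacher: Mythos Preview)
Your proposal is correct and follows essentially the same route as the paper: the same preliminary lemmas (discrete $L^2$-in-time error for $u_h^n(\Pi_hq^\dagger)$ and the a priori bound from the minimizing property), the same four-term splitting after inserting $P_h\varphi$, the same test function $\varphi^n=(q^\dagger-q_h^*)u^n(q^\dagger)$, and the same summation-by-parts handling of the time-derivative term that explains the triple sum. A few small points worth tightening: the summation by parts is carried out on $(\partial_\tau(u_h^n-u^n),P_h\varphi^n)$, so the interior term involves $\partial_\tau P_h\varphi^n$; the resulting boundary and interior pieces are bounded by $C\eta$ alone (they only use $\|\varphi^n\|_{L^2}\le C$ and $\|\partial_\tau P_h\varphi^n\|$ controlled by $\|\partial_t u\|_{L^2(L^2)}$), not by $\eta\cdot\alpha^{-1/2}\eta$ as you wrote; and the consistency term $\partial_\tau u^n-\partial_t u^n$ is handled by rewriting it via the time-averaged weak formulation (so only $u\in H^1(0,T;H_0^1)$ is needed, not $H^1(0,T;H^2)$).
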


The overall proof strategy is similar to the elliptic case in Section \ref{sec:err-ell}, but it is more involved due to the presence of time derivative. It relies heavily on the following preliminary results. The lengthy proofs are similar to the elliptic case, and hence are deferred to the appendix.
\begin{lemma}\label{lemma:error estimates of u(q)-u_h(q) in L^2 in parabolic system}
	Let Assumption \ref{assum:Assumption in parabolic system} be fulfilled. Then for sufficiently small $\tau$, there holds
	\begin{equation}\label{inequ:error estimates of u(q)-u_h(q) in L^2 in parabolic system}
		\tau\sum_{n=1}^{N}\|u^n(q^{\dagger})-u^n_h(q^{\dagger})\|^2_{L^2(\Omega)}\leq  C(\tau^2+h^4).
	\end{equation}
\end{lemma}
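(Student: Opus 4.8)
The plan is to compare the fully discrete solution $u_h^n(q^\dagger)$ with the continuous solution $u^n(q^\dagger)$ through the Ritz projection $R_h$, and to split the error accordingly as
\[
u^n(q^\dagger) - u_h^n(q^\dagger) = \bigl(u^n(q^\dagger) - R_h u^n(q^\dagger)\bigr) + \bigl(R_h u^n(q^\dagger) - u_h^n(q^\dagger)\bigr) =: \rho^n + \theta^n.
\]
The first piece $\rho^n$ is controlled directly by the Ritz projection estimate \eqref{inequ:R_h on H^2} together with the parabolic regularity $u(q^\dagger)\in L^\infty(0,T;H^2(\Omega))$ provided by Assumption \ref{assum:Assumption in parabolic system}, giving $\|\rho^n\|_{L^2(\Omega)}\le Ch^2$ uniformly in $n$, and hence $\tau\sum_n\|\rho^n\|_{L^2(\Omega)}^2\le Ch^4$. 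The bulk of the work is the bound on $\theta^n$.

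For $\theta^n$ I would derive the error equation by subtracting \eqref{equ:finite element problem in parabolic system} from the weak form \eqref{equ:variational problem in parabolic system} tested against $\varphi_h\in V_{h0}$ and using the definition of $R_h$; this yields, for $n=1,\dots,N$,
\[
(\partial_\tau\theta^n,\varphi_h)+(\nabla\theta^n,\nabla\varphi_h)+(q^\dagger\theta^n,\varphi_h) = -(\partial_\tau\rho^n,\varphi_h) - \bigl(\partial_\tau R_h u^n(q^\dagger) - \overline{\partial_t u}^n,\varphi_h\bigr) + (\text{consistency}),
\]
where the right-hand side collects the Ritz-projection time-derivative error and the backward-Euler truncation error. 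Testing with $\varphi_h=\theta^n$, using $(\partial_\tau\theta^n,\theta^n)\ge \tfrac1{2\tau}(\|\theta^n\|^2-\|\theta^{n-1}\|^2)$, $c_0\ge0$, and the fact that $\theta^0 = R_hu_0 - P_hu_0$ is $O(h^2)$ in $L^2$, a discrete Gronwall argument gives $\max_n\|\theta^n\|_{L^2(\Omega)}^2 + \tau\sum_n\|\nabla\theta^n\|^2 \le C(\tau^2+h^4)$. The time-truncation term requires the regularity $\partial_{tt}u$ or at least $\partial_t u\in L^2(0,T;H^1_0)$, which follows from $f\in H^1(0,T;L^2(\Omega))$ and $u_0\in H^2\cap H^1_0$ in Assumption \ref{assum:Assumption in parabolic system}; the Ritz-derivative term uses $\|\partial_\tau\rho^n\|_{L^2}\le Ch^2\tau^{-1}\int_{t^{n-1}}^{t^n}\|\partial_tu\|_{H^2}\,dt$ and Cauchy--Schwarz in time. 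Combining with the $\rho^n$ bound and the triangle inequality gives \eqref{inequ:error estimates of u(q)-u_h(q) in L^2 in parabolic system}.

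The main obstacle I anticipate is the careful handling of the backward-Euler consistency error together with the $L^2$-projection of $f$ over each time cell: one must verify that $\|\overline{\partial_t u}^n - \partial_\tau u^n\|$ and the averaging of the source contribute only $O(\tau)$ in the $\ell^2(L^2)$ norm, which needs $\partial_t u\in L^2(0,T;L^2(\Omega))$ and $\partial_{tt}u$ (equivalently $\partial_t f\in L^2$) — precisely what Assumption \ref{assum:Assumption in parabolic system} guarantees. A secondary technical point is the "sufficiently small $\tau$" hypothesis, needed so that the implicit discrete Gronwall constant stays bounded (the hidden $(1-C\tau)^{-1}$ factors remain $O(1)$). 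Since these are standard fully-discrete parabolic FEM estimates, I would state the error equation and the Gronwall step explicitly and relegate the routine truncation-error bookkeeping to the appendix, as the authors indicate.
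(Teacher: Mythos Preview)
Your overall strategy---split through the Ritz projection as $u^n-u_h^n=\rho^n+\theta^n$, derive an error equation for $\theta^n$, test with $\theta^n$, and apply a discrete Gronwall argument---is the textbook route and is \emph{different} from what the paper does. The paper instead first replaces $u^n$ by the cell average $\bar u^n$ (at the cost of an $O(\tau)$ term controlled by $u\in H^1(0,T;L^2)$), and then bounds $\tau\sum_n\|\bar u^n-u_h^n\|_{L^2}^2$ by a \emph{discrete duality argument}: it introduces a backward fully discrete adjoint $\{w_h^{n-1}\}$ solving
\[
(-\partial_\tau w_h^n,\varphi_h)+(\nabla w_h^{n-1},\nabla\varphi_h)+(q^\dagger w_h^{n-1},\varphi_h)=(\bar u^n-u_h^n,\varphi_h),
\]
derives the a priori bound $\max_n\|w_h^{n-1}\|_{L^2}^2+\tau\sum_n\|\partial_\tau w_h^n\|_{L^2}^2\le C\tau\sum_n\|\bar u^n-u_h^n\|_{L^2}^2$, and then represents $\tau\sum_n\|\bar u^n-u_h^n\|_{L^2}^2$ through $w_h$. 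Each resulting term is bounded using only $u\in L^\infty(0,T;H^2(\Omega))\cap H^1(0,T;H_0^1(\Omega))$, $f\in H^1(0,T;L^2(\Omega))$, and $u_0\in H^2(\Omega)$.

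There is a genuine gap in your proposal at the Ritz-derivative step. You write
\[
\|\partial_\tau\rho^n\|_{L^2(\Omega)}\le Ch^2\,\tau^{-1}\!\int_{t^{n-1}}^{t^n}\|\partial_t u\|_{H^2(\Omega)}\,\mathrm{d}t,
\]
which requires $\partial_t u\in L^2(0,T;H^2(\Omega))$. Under Assumption~\ref{assum:Assumption in parabolic system} this is \emph{not} available: differentiating the equation in time gives $v:=\partial_t u$ solving $\partial_t v-\Delta v+q^\dagger v=\partial_t f$ with initial value $v(0)=f(0)+\Delta u_0-q^\dagger u_0\in L^2(\Omega)$ only, so standard parabolic regularity yields merely $\partial_t u\in L^2(0,T;H_0^1(\Omega))$ (indeed the paper only claims $u\in H^1(0,T;H_0^1(\Omega))\cap L^\infty(0,T;H^2(\Omega))$). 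With only $\partial_t u\in L^2(0,T;H^1)$ the Ritz estimate gives $\|\partial_\tau\rho^n\|_{L^2}\le Ch\cdot(\dots)$, and the energy argument then produces $C(\tau^2+h^2)$, not $C(\tau^2+h^4)$. The same concern applies to your appeal to $\partial_{tt}u$ for the backward-Euler truncation: $u\in H^2(0,T;L^2(\Omega))$ is likewise not guaranteed without the compatibility $f(0)+\Delta u_0-q^\dagger u_0\in H_0^1(\Omega)$.

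The duality argument in the paper is designed precisely to sidestep this: the time derivative lands on the adjoint $w_h$ (via summation by parts), so one never needs to bound $\|\partial_\tau\rho^n\|_{L^2}$ directly, and the spatial $h^2$ factors come only from $R_h\bar u^n-\bar u^n$ and $u_0-P_hu_0$, which require just $u\in L^2(0,T;H^2)$ and $u_0\in H^2$. If you wish to keep your energy approach, you would need either to strengthen Assumption~\ref{assum:Assumption in parabolic system} with the above compatibility condition, or to rework the $(\partial_\tau\rho^n,\theta^n)$ term via a summation-by-parts/duality trick---at which point you are essentially reproducing the paper's argument.
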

\begin{lemma}\label{lemma:error estimates of u(q)-u_h(Pi_hq) in L^2 in parabolic system}
	Let Assumption \ref{assum:Assumption in parabolic system} be fulfilled. Then for small $\tau$, there holds
	\begin{equation}\label{inequ:error estimates of u(q)-u_h(Pi_hq) in L^2 in parabolic system}
		\tau\sum_{n=1}^{N}\|u^n(q^{\dagger})-u^n_h(\Pi_hq^{\dagger})\|^2_{L^2(\Omega)}\leq  C(\tau^2+h^4).
	\end{equation}
\end{lemma}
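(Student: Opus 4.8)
The plan is to transfer the elliptic argument of Lemma~\ref{lem:error estimates of u(q)-u(pi_hq)} to the fully discrete parabolic scheme. First I would split, for each $n$,
\[
u^n(q^{\dagger})-u^n_h(\Pi_hq^{\dagger}) = \big(u^n(q^{\dagger})-u^n_h(q^{\dagger})\big)+\big(u^n_h(q^{\dagger})-u^n_h(\Pi_hq^{\dagger})\big)=:\rho^n+w_h^n,
\]
where $u_h^n(q^{\dagger})\in V_{h0}$ denotes the fully discrete solution obtained by inserting the exact coefficient $q^{\dagger}$ into \eqref{equ:finite element problem in parabolic system} (which is uniquely solvable, since the associated bilinear form is coercive on $V_{h0}$). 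The defect $\rho^n$ is precisely the quantity estimated in Lemma~\ref{lemma:error estimates of u(q)-u_h(q) in L^2 in parabolic system}, so $\tau\sum_{n=1}^N\|\rho^n\|_{L^2(\Omega)}^2\le C(\tau^2+h^4)$, and it remains to control $w_h^n$.

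For $w_h^n$ I would subtract the scheme \eqref{equ:finite element problem in parabolic system} written for $q_h=\Pi_hq^{\dagger}$ from the one written for the coefficient $q^{\dagger}$, and rewrite the zeroth-order part via $q^{\dagger}u_h^n(q^{\dagger})-\Pi_hq^{\dagger}u_h^n(\Pi_hq^{\dagger})=(q^{\dagger}-\Pi_hq^{\dagger})u_h^n(q^{\dagger})+\Pi_hq^{\dagger}w_h^n$, which gives, for all $\varphi_h\in V_{h0}$,
\[
(\partial_\tau w_h^n,\varphi_h)+(\nabla w_h^n,\nabla\varphi_h)+(\Pi_hq^{\dagger}w_h^n,\varphi_h)=-\big((q^{\dagger}-\Pi_hq^{\dagger})u_h^n(q^{\dagger}),\varphi_h\big),
\]
with $w_h^0=0$ since both discrete states start from $P_hu_0$. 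Taking $\varphi_h=w_h^n$, using $(\partial_\tau w_h^n,w_h^n)\ge\tfrac1{2\tau}(\|w_h^n\|_{L^2(\Omega)}^2-\|w_h^{n-1}\|_{L^2(\Omega)}^2)$ and $\Pi_hq^{\dagger}\ge c_0\ge0$, it suffices to estimate the right-hand side. Following the elliptic proof I would split $(q^{\dagger}-\Pi_hq^{\dagger})u_h^n(q^{\dagger})=(q^{\dagger}-\Pi_hq^{\dagger})u^n(q^{\dagger})+(q^{\dagger}-\Pi_hq^{\dagger})\rho^n$: the first part is bounded in $L^2(\Omega)$ by $\|q^{\dagger}-\Pi_hq^{\dagger}\|_{L^2(\Omega)}\|u^n(q^{\dagger})\|_{L^\infty(\Omega)}\le Ch^2$ via \eqref{inequ:pi_h on H^2} (with $s=2$, $p=2$, admissible for $d\le 3$) together with the regularity $u(q^{\dagger})\in L^\infty(0,T;L^\infty(\Omega))$; the second part is bounded by $\|q^{\dagger}-\Pi_hq^{\dagger}\|_{L^\infty(\Omega)}\|\rho^n\|_{L^2(\Omega)}\le(c_1-c_0)\|\rho^n\|_{L^2(\Omega)}$, since the Lagrange interpolant of a function in $K\cap C(\overline{\Omega})$ again takes values in $[c_0,c_1]$. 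Hence $|((q^{\dagger}-\Pi_hq^{\dagger})u_h^n(q^{\dagger}),w_h^n)|\le C(h^2+\|\rho^n\|_{L^2(\Omega)})\|w_h^n\|_{L^2(\Omega)}$, so that \emph{no} $L^\infty$ bound on the discrete state $u_h^n(q^{\dagger})$ is required; this is the one point that needs care, and is exactly where the elliptic-style splitting pays off.

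Multiplying the resulting inequality by $2\tau$, summing over $n=1,\dots,m$, and applying Young's inequality yields $\|w_h^m\|_{L^2(\Omega)}^2+2\tau\sum_{n=1}^m\|\nabla w_h^n\|_{L^2(\Omega)}^2\le\tau\sum_{n=1}^m\|w_h^n\|_{L^2(\Omega)}^2+C\tau\sum_{n=1}^m(h^4+\|\rho^n\|_{L^2(\Omega)}^2)$, and since $\tau N=T$, the last sum is $\le C(\tau^2+h^4)$ by Lemma~\ref{lemma:error estimates of u(q)-u_h(q) in L^2 in parabolic system}. The discrete Grönwall inequality (valid for $\tau$ small, which is also the hypothesis already inherited from Lemma~\ref{lemma:error estimates of u(q)-u_h(q) in L^2 in parabolic system}) then gives $\max_{1\le m\le N}\|w_h^m\|_{L^2(\Omega)}^2\le C(\tau^2+h^4)$, whence $\tau\sum_{n=1}^N\|w_h^n\|_{L^2(\Omega)}^2\le T\max_n\|w_h^n\|_{L^2(\Omega)}^2\le C(\tau^2+h^4)$. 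Combining this with the bound on $\rho^n$ and the triangle inequality proves \eqref{inequ:error estimates of u(q)-u_h(Pi_hq) in L^2 in parabolic system}. I expect the only genuine obstacle to be the one flagged above — arranging the zeroth-order coupling term so as to bypass any discrete maximum-norm estimate for $u_h^n(q^{\dagger})$ — while the discrete energy identity, summation by parts in time, and discrete Grönwall are all routine.
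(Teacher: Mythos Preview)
Your proposal is correct and follows essentially the same route as the paper: both split through $u_h^n(q^{\dagger})$, derive the discrete error equation for $w_h^n=u_h^n(q^{\dagger})-u_h^n(\Pi_hq^{\dagger})$, test with $w_h^n$, split the forcing term $(q^{\dagger}-\Pi_hq^{\dagger})u_h^n(q^{\dagger})$ via the exact state $u^n(q^{\dagger})$ to avoid any discrete $L^\infty$ bound, and close with the discrete Gr\"onwall inequality and Lemma~\ref{lemma:error estimates of u(q)-u_h(q) in L^2 in parabolic system}. The only slip is a harmless sign in your splitting (since $u_h^n(q^{\dagger})=u^n(q^{\dagger})-\rho^n$, the second term should carry a minus sign), which does not affect the estimate.
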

\begin{lemma}\label{lemma:error estimate of u(q)-u_h(q_h^*) and grad q_h^* in L^2 in parabolic system}
Let the assumption in Theorem \ref{thm:error estimates of q-q_h with weight in parabolic system} be fulfilled. Then there holds
	\begin{equation*}\label{inequ:error estimate of u(q)-u_h(q_h^*) and grad q_h^* in L^2 in parabolic system}
		\tau\sum_{n=N_{0}}^N\|u^n(q^\dagger)-u^n_h(q_h^*)\|^2_{L^2(\Omega)}+\alpha\|\nabla q_h^*\|^2_{L^2(\Omega)}\leq  C(\tau^2+h^4+\delta^2+\alpha).
	\end{equation*}
\end{lemma}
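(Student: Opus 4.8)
The plan is to mirror the minimality argument used in the elliptic Lemma~\ref{lemma:error estimate of u(q)-u_h(q_h^*) and grad q_h^* in L^2}. Since $q_h^*$ minimizes $J_{\alpha,h,\tau}$ over $K_h$ and $\Pi_h q^\dagger\in K_h$, the inequality $J_{\alpha,h,\tau}(q_h^*)\le J_{\alpha,h,\tau}(\Pi_h q^\dagger)$ yields
\[
\frac{\tau}{2}\sum_{n=N_0}^N\|u_h^n(q_h^*)-z_n^\delta\|_{L^2(\Omega)}^2+\frac{\alpha}{2}\|\nabla q_h^*\|_{L^2(\Omega)}^2\le\frac{\tau}{2}\sum_{n=N_0}^N\|u_h^n(\Pi_h q^\dagger)-z_n^\delta\|_{L^2(\Omega)}^2+\frac{\alpha}{2}\|\nabla\Pi_h q^\dagger\|_{L^2(\Omega)}^2.
\]
First I would bound the right-hand side. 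For the data-fidelity sum I would split $u_h^n(\Pi_h q^\dagger)-z_n^\delta$ into $(u_h^n(\Pi_h q^\dagger)-u^n(q^\dagger))+(u^n(q^\dagger)-z_n^\delta)$; the first difference is controlled by Lemma~\ref{lemma:error estimates of u(q)-u_h(Pi_hq) in L^2 in parabolic system}, giving $\tau\sum_n\|u^n(q^\dagger)-u_h^n(\Pi_h q^\dagger)\|_{L^2(\Omega)}^2\le C(\tau^2+h^4)$. For the regularization term, the interpolation estimate \eqref{inequ:pi_h on H^2} with $s=2$, $p=2$ together with the triangle inequality gives $\|\nabla\Pi_h q^\dagger\|_{L^2(\Omega)}\le C\|q^\dagger\|_{H^2(\Omega)}\le C$, so that $\alpha\|\nabla\Pi_h q^\dagger\|_{L^2(\Omega)}^2\le C\alpha$.

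The main obstacle is the term $\tau\sum_{n=N_0}^N\|u^n(q^\dagger)-z_n^\delta\|_{L^2(\Omega)}^2$, because the noise level $\delta$ is measured in the continuous norm $\|u(q^\dagger)-z^\delta\|_{L^2(T_0,T;L^2(\Omega))}$ while $z_n^\delta$ is the piecewise time-average $\bar z^{\delta,n}$. My plan is to insert the time-average $\bar u^n(q^\dagger):=\tau^{-1}\int_{t^{n-1}}^{t^n}u(q^\dagger)\,\mathrm{d}t$ and split
\[
u^n(q^\dagger)-z_n^\delta=\big(u^n(q^\dagger)-\bar u^n(q^\dagger)\big)+\big(\bar u^n(q^\dagger)-\bar z^{\delta,n}\big).
\]
The second piece is handled by the Cauchy--Schwarz (Jensen) inequality, $\|\bar u^n(q^\dagger)-\bar z^{\delta,n}\|_{L^2(\Omega)}^2\le\tau^{-1}\int_{t^{n-1}}^{t^n}\|u(q^\dagger)-z^\delta\|_{L^2(\Omega)}^2\,\mathrm{d}t$, so that upon multiplying by $\tau$ and summing the contribution telescopes into $\int_{T_0}^T\|u(q^\dagger)-z^\delta\|_{L^2(\Omega)}^2\,\mathrm{d}t=\delta^2$. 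The first piece is a time-quadrature error: writing $u^n(q^\dagger)-\bar u^n(q^\dagger)=\tau^{-1}\int_{t^{n-1}}^{t^n}\!\int_t^{t^n}\partial_s u(q^\dagger)\,\mathrm{d}s\,\mathrm{d}t$ and applying Cauchy--Schwarz gives $\|u^n(q^\dagger)-\bar u^n(q^\dagger)\|_{L^2(\Omega)}^2\le\tau\int_{t^{n-1}}^{t^n}\|\partial_s u(q^\dagger)\|_{L^2(\Omega)}^2\,\mathrm{d}s$; summing and using $u(q^\dagger)\in H^1(0,T;H_0^1(\Omega))$ from Assumption~\ref{assum:Assumption in parabolic system} yields $\tau\sum_n\|u^n(q^\dagger)-\bar u^n(q^\dagger)\|_{L^2(\Omega)}^2\le C\tau^2$. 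Hence $\tau\sum_{n=N_0}^N\|u^n(q^\dagger)-z_n^\delta\|_{L^2(\Omega)}^2\le C(\tau^2+\delta^2)$.

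Combining these bounds gives $\tau\sum_{n=N_0}^N\|u_h^n(q_h^*)-z_n^\delta\|_{L^2(\Omega)}^2+\alpha\|\nabla q_h^*\|_{L^2(\Omega)}^2\le C(\tau^2+h^4+\delta^2+\alpha)$, which already delivers the bound on $\alpha\|\nabla q_h^*\|_{L^2(\Omega)}^2$. Finally, to recover the state-error sum I would apply the triangle inequality $\|u^n(q^\dagger)-u_h^n(q_h^*)\|_{L^2(\Omega)}\le\|u^n(q^\dagger)-z_n^\delta\|_{L^2(\Omega)}+\|z_n^\delta-u_h^n(q_h^*)\|_{L^2(\Omega)}$, square, sum against $\tau$, and invoke the bound on $\tau\sum_n\|u^n(q^\dagger)-z_n^\delta\|_{L^2(\Omega)}^2$ just established together with the fidelity bound from the minimality inequality. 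This closes the estimate.
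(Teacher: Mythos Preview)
Your proposal is correct and follows essentially the same approach as the paper's own proof: the paper likewise first establishes the auxiliary bound $\tau\sum_{n=N_0}^N\|u^n(q^\dagger)-z_n^\delta\|_{L^2(\Omega)}^2\le C(\tau^2+\delta^2)$ via the same splitting $u^n-z_n^\delta=(u^n-\bar u^n)+(\bar u^n-z_n^\delta)$ and the same time-quadrature/Jensen estimates, then uses minimality $J_{\alpha,h,\tau}(q_h^*)\le J_{\alpha,h,\tau}(\Pi_h q^\dagger)$ with Lemma~\ref{lemma:error estimates of u(q)-u_h(Pi_hq) in L^2 in parabolic system} and finishes by the triangle inequality. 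The only cosmetic difference is that the paper bounds $\|\nabla\Pi_h q^\dagger\|_{L^2(\Omega)}$ by $C\|q^\dagger\|_{H^1(\Omega)}$ rather than $C\|q^\dagger\|_{H^2(\Omega)}$, which is immaterial here.
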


Now we can state the proof of Theorem \ref{thm:error estimates of q-q_h with weight in parabolic system}.
\begin{proof}
Let $u\equiv u(q^\dag)$. For any $\varphi\in H_0^1(\Omega)$, we have the following splitting
\begin{align*}
	&((q^\dagger-q_h^*)u^n,\varphi)
	=((q^\dagger-q_h^*)u^n,\varphi-P_h\varphi)+((q^\dagger-q_h^*)u^n,P_h\varphi)  \\
	=&((q^\dagger-q_h^*)u^n,\varphi-P_h\varphi)+(\partial_\tau u_h^n(q_h^*)-\partial_tu^n,P_h\varphi)\\
	&+(\nabla(u^n_h(q_h^*)-u^n),\nabla P_h\varphi)+(q_h^*(u^n_h(q_h^*)-u^n),P_h\varphi)=: \sum_{i=1}^4 {\rm I}_i^n.
\end{align*}
Let $\varphi=\varphi^n=(q^\dagger-q_h^*)u^n$. Then
$\nabla\varphi^n = (\nabla q^\dagger -\nabla q_h^*)u^n+(q^\dagger-q^*_h)\nabla u^n.$ By Assumption \ref{assum:Assumption in parabolic system} and the a priori regularity $ u\in L^\infty(0,T;L^\infty(\Omega))$, there  holds 
\begin{equation}\label{inequ:estimates for varphin}
    \|\varphi^n\|\leq C\quad\mbox{and}\quad\|\nabla\varphi^n\|_{L^2(\Omega)}\leq C(1+\|\nabla q_h^*\|_{L^2(\Omega)}),\quad n=1,\ldots,N,
\end{equation}
which implies $\varphi^n\in H^1_0(\Omega)$.
Using Assumption \ref{assum:Assumption in parabolic system}, Lemma \ref{lemma:error estimate of u(q)-u_h(q_h^*) and grad q_h^* in L^2 in parabolic system}, and repeating the argument of Theorem \ref{thm:error estimates of q-q_h with weight}, we deduce
\begin{equation*}
	\sum_{n=N_0}^N|{\rm I}^n_1|\leq  C{h}\alpha^{-\frac{1}{2}}\eta,\  \tau\sum_{n=N_{0}}^N|{\rm I}_3^n|\leq  C{\min(h+h^{-1}\eta,1))}\alpha^{-\frac{1}{2}}\eta,\ \mbox{and}\ \tau\sum_{n=N_{0}}^N|{\rm I}_4^n|\leq  C{\eta},
\end{equation*}
where the constant $C$ depends on $q^\dagger$. To estimate ${\rm I}_2^n$, we split it into
\begin{equation*}
  	{\rm I}_2^n=(\partial_\tau (u_h^n(q_h^*)-u^n),P_h\varphi^n)+(\partial_\tau u^n-\partial_tu^n,P_h\varphi^n)=: {\rm I}_{2,1}^n+{\rm I}_{2,2}^n.
\end{equation*}
Let $\varphi=P_h\varphi^n$ in \eqref{equ:integration of variational formulation in papabolic system} and \eqref{equ:variational problem in parabolic system}.
By Assumption \ref{assum:Assumption in parabolic system}, $f\in H^1(0,T;L^2(\Omega)),$ we have the a priori regularity $u\in H^1(0,T;H^1(\Omega))$. Consequently, the proof of Lemma \ref{lemma:error estimates of u(q)-u_h(q) in L^2 in parabolic system} yields
\begin{align*}
    \tau\sum_{n=N_{0}}^N\|\bar{f}^n-f^n\|^2_{L^2(\Omega)}\leq C\tau^2,\quad 
    \tau\sum_{n=N_{0}}^N\|\bar{u}^n-u^n\|^2_{H^1(\Omega)}\leq C\tau^2, \quad
    \tau\sum_{n=N_{0}}^N\|\bar{u}^n-u^n\|^2_{L^2(\Omega)}\leq C\tau^2.
\end{align*}
This, the $L^2(\Omega)$-stability of $P_h$ in \eqref{inequ:P_h}, the Cauchy-Schwarz inequality and \eqref{inequ:estimates for varphin} imply
\begin{align*}
\Big|\tau\sum_{n=N_{0}}^N{\rm I}_{2,2}^n\Big|  &=\Big|\tau\sum_{n=N_{0}}^N(\bar{f}^n-f^n,P_h\varphi^n)-\tau\sum_{n=N_{0}}^N(\nabla(\bar{u}^n-u^n),\nabla P_h\varphi^n)-\tau\sum_{n=N_{0}}^N(q^\dagger(\bar{u}^n-u^n),P_h\varphi^n)\Big| \\
	&\leq C\Big(\tau\sum_{n=N_{0}}^N\|\bar{f}^n-f^n\|^2_{L^2(\Omega)}\Big)^{\frac{1}{2}}+\Big(\tau\sum_{n=N_{0}}^N\|\bar{u}^n-u^n\|^2_{H^1(\Omega)}\Big)^{\frac{1}{2}}\max_{n=N_0,\ldots,N}\|\nabla\varphi^n\|_{L^2(\Omega)}\\ &\quad +\Big(\tau\sum_{n=N_{0}}^N\|\bar{u}^n-u^n\|^2_{L^2(\Omega)}\Big)^\frac{1}{2}
	\leq C{\tau}\alpha^{-\frac{1}{2}}\eta. 
\end{align*}
Next we bound the term ${\rm I}_{2,1}^n$. For $N_{0}+1\leq  i\leq  j\leq  N$, the summation by parts formula leads to
\begin{align*}
	\tau\sum_{n=i}^j{\rm I}_{2,1}^n&=
	(u_h^j(q^*_h)-u^j,P_h\varphi^j)-(u_h^{i-1}(q^*_h)-u^{i-1},P_h\varphi^{i-1}) 
	-\tau\sum_{n=i}^j(u_h^{n-1}(q^*_h)-u^{n-1},\partial_\tau P_h\varphi^n).
\end{align*}
Using Assumption \ref{assum:Assumption in parabolic system}, Lemma \ref{lemma:error estimate of u(q)-u_h(q_h^*) and grad q_h^* in L^2 in parabolic system}, the $L^2(\Omega)$ stability of $P_h$ and the Cauchy-Schwarz inequality, we have
\begin{align*}
 &|\tau^2\sum_{j=N_{0}+1}^N\sum_{i=N_{0}+1}^j(u_h^j(q^*_h)-u^j,P_h\varphi^j)-(u_h^{i-1}(q^*_h)-u^{i-1},P_h\varphi^{i-1})| \\
 &\leq  C\Big(\tau\sum_{n=N_{0}}^N\|u_h^n(q^*_h)-u^n\|_{L^2(\Omega)}^2\Big)^{\frac{1}{2}}\leq  C{\eta}.
\end{align*}
Next, by the $L^2(\Omega)$ stability of $P_h$ in \eqref{inequ:P_h}, the box constraint and the Cauchy-Schwarz inequality, we have
\begin{equation*}
    \|\partial_\tau P_h\varphi^n\|_{L^2(\Omega)} = \Big\|P_h(\tau^{-1}\int_{t^{n-1}}^{t^n}(q^\dagger-q_h^*)\partial_tu\mathrm{d}t)\Big\|_{L^2(\Omega)}\leq  C\Big\|\tau^{-\frac{1}{2}}\Big(\int_{t^{n-1}}^{t^n}|\partial_tu|^2\mathrm{d}t\Big)^{\frac{1}{2}}\Big\|_{L^2(\Omega)}.
\end{equation*}
Since $u\in H^1(0,T;H^1_0(\Omega))$, we deduce
\begin{equation*}
    \tau\sum_{n=N_{0}+1}^N\|\partial_\tau P_h\varphi^n\|_{L^2(\Omega)}^2\leq \sum_{n=N_{0}+1}^N\|\partial_tu\|^2_{L^2(t^{n-1},t^n;L^2(\Omega))}=\|\partial_tu\|^2_{L^2(T_0,T;L^2(\Omega))}\leq C.
\end{equation*}
This and the Cauchy-Schwarz inequality imply
\begin{align*}
	\Big|\tau\sum_{n=i}^j(u_h^{n-1}(q^*_h)-&u^{n-1},\partial_\tau P_h\varphi^n)\Big| 
	\leq \tau\sum_{n=i}^j\|u_h^{n-1}(q^*_h)-u^{n-1}\|_{L^2(\Omega)}\|\partial_\tau P_h\varphi^n\|_{L^2(\Omega)} \\
	&\leq \Big(\tau\sum_{n=N_{0}+1}^N\|u_h^{n-1}(q^*_h)-u^{n-1}\|_{L^2(\Omega)}^2\Big)^{\frac{1}{2}}\Big(\tau\sum_{n=N_{0}+1}^N\|\partial_\tau P_h\varphi^n\|_{L^2(\Omega)}^2\Big)^\frac{1}{2}
	\leq  C{\eta},
\end{align*}
and hence there holds
\begin{equation*}
	|\tau^3\sum_{j=N_{0}+1}^N\sum_{i=N_{0}+1}^j\sum_{n=i}^j(u_h^{n-1}(q^*_h)-u^{n-1},\partial_\tau P_h\varphi^n)|\leq  C{\eta}.
\end{equation*}
Combining the preceding estimates gives
\begin{equation*}
	\tau^3\sum_{j=N_{0}+1}^N\sum_{i=N_{0}+1}^j\sum_{n=i}^j\|(q^\dagger-q_h^*)u^n\|^2_{L^2(\Omega)}\leq  C(\tau+h+\alpha^{\frac{1}{2}}+\min(h+h^{-1}\eta,1))\alpha^{-\frac{1}{2}}\eta.
\end{equation*}
This completes the proof of the theorem.
\end{proof}

Now we establish interior and global $L^2$ error estimations.
\begin{corollary}\label{coro:interior error estimates in $L^2$-norm of parabolic problem}
Let Assumption \ref{assum:Assumption in parabolic system} be fulfilled, the source $f\not\equiv  0$ be nonnegative  a.e. in $\Omega\times(0,T)$ and $u_0\geq  0$ a.e. in $\Omega$. Then for any compact subset $\Omega'\subset\subset\Omega$ with {\rm dist}$(\overline{\Omega'},\partial\Omega)>0$, there exists a positive constant $C$, depending on {\rm dist}$(\overline{\Omega'},\partial\Omega)$ and $q^\dagger$, such that
\begin{equation*}
\|q^\dagger-q^*_h\|_{L^2(\Omega')}\leq  C(\tau^\frac{1}{2}+h^{\frac{1}{2}}+\alpha^{\frac{1}{4}}+\min(h^\frac12+h^{-\frac{1}{2}}\eta^{\frac{1}{2}},1))\alpha^{-\frac{1}{4}}\eta^{\frac{1}{2}}.
\end{equation*}

\end{corollary}
\begin{proof}
By the standard parabolic maximum principle (see, e.g., \cite{Nirenberg:1953,Friedman:1958}), $f\geq0$ a.e. in $\Omega\times(0,T)$ and $u_0\geq0$ in $\Omega$ imply $u(q^\dagger)\geq  0\ \mbox{a.e. in}\ \Omega\times(0,T)$. 
Let $w\in L^2(0,T;H_0^1(\Omega))\cap H^1(0,T;H^{-1}(\Omega))$ with $w(0)= \frac{u_0}{2}$ solve
\begin{equation*}	
	(\partial_tw,\varphi)+(\nabla w,\nabla\varphi)+(c_1w,\varphi)=(f,\varphi), \quad\forall \varphi\in H^1_0(\Omega),\ \mbox{a.e.}\ t\in(0,T).
\end{equation*}
Then $ v= u(q^\dagger)-w$ satisfies $v(0)=\frac{u_0}{2}$ and
\begin{equation*}	
	(\partial_t v,\varphi)+(\nabla v,\nabla\varphi)+(q^\dag v,\varphi)=((c_1-q^\dag)u(q^\dag),\varphi), \quad\forall \varphi\in H^1_0(\Omega),\ \mbox{a.e.}\ t\in(0,T).
\end{equation*}
Then the weak maximum principle for parabolic problem implies  $v\geq 0$, i.e., $u(q^\dagger)\geq  w$  a.e. in $\Omega\times(0,T)$. By Theorem \ref{thm:error estimates of q-q_h with weight in parabolic system}, there holds
    \begin{equation*}
			\tau^3\sum_{j=N_{0}+1}^{N}\sum_{i=N_{0}+1}^{j}\sum_{n=i}^{j}\|(q^{\dagger}-q_h^*)w^n\|_{L^2(\Omega)}\leq  C(\tau^{\frac{1}{2}}+h^{\frac{1}{2}}+\alpha^{\frac{1}{4}}+{\min(h^\frac12+h^{-\frac{1}{2}}\eta^{\frac{1}{2}},1)})\alpha^{-\frac{1}{4}}\eta^{\frac{1}{2}}.
		\end{equation*}
Then by \cite[Theorem 2.4]{Bobkov2018OnMA}, we have $w\in C((0,T];C(\overline{\Omega}))$ and $w>0\ \mbox{in}\ \Omega\times(0,T)$. The assertion follows from the continuity of $w$ and the compactness of $\Omega'$.
\end{proof}

We also have the following global $L^2(\Omega)$ estimate by the positive condition \eqref{eqn:positivity-parab}.
\begin{corollary}\label{coro:standard L2 in para}
Let conditions in Theorem \ref{thm:error estimates of q-q_h with weight in parabolic system} be fulfilled and condition \eqref{eqn:positivity-parab} hold. Then there holds
\begin{equation*}
    \|q^\dagger-q_h^*\|_{L^2(\Omega)}\leq C ((\tau^\frac{1}{2}+h^{\frac{1}{2}}+\alpha^{\frac{1}{4}}+{\min(h^\frac12+h^{-\frac{1}{2}}\eta^{\frac{1}{2}},1)})\alpha^{-\frac{1}{4}}\eta^{\frac{1}{2}})^\frac{1}{(1+2\beta)}.
\end{equation*}
\end{corollary}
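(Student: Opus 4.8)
The plan is to mimic exactly the domain-splitting argument used in the proof of Theorem \ref{thm:stab-ellipt} (and reused in the parabolic stability theorem), but now feeding it the weighted error bound from Theorem \ref{thm:error estimates of q-q_h with weight in parabolic system} in place of the continuous stability estimate. First I would fix a representative time level; since the positivity condition \eqref{eqn:positivity-parab} holds for every $t\in(T_0,T)$ with a uniform constant, and since the triple sum on the left-hand side of \eqref{equ:error estimates of q-q_h in parabolic system} is a positively weighted combination of the quantities $\|(q^\dagger-q_h^*)u^n(q^\dagger)\|_{L^2(\Omega)}$ over grid indices $n\ge N_0$, at least one such index $n^*$ must satisfy $\|(q^\dagger-q_h^*)u^{n^*}(q^\dagger)\|_{L^2(\Omega)}\le C\,E$, where $E:=(\tau^{\frac12}+h^{\frac12}+\alpha^{\frac14}+\min(h^{\frac12}+h^{-\frac12}\eta^{\frac12},1))\alpha^{-\frac14}\eta^{\frac12}$ is the right-hand side of \eqref{equ:error estimates of q-q_h in parabolic system} (the combinatorial prefactor $\tau^3\sum\sum\sum\sim(T-T_0)^3/6$ is an $h,\tau,\delta,\alpha$-independent constant and is absorbed into $C$). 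Here $u^{n^*}(q^\dagger)=u(q^\dagger)(\cdot,t^{n^*})$ with $t^{n^*}\in(T_0,T)$, so the positivity bound $u^{n^*}(q^\dagger)(x)\ge C\,\mathrm{dist}(x,\partial\Omega)^\beta$ is available.

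Next I would split $\Omega=\Omega_\rho\cup\Omega_\rho^c$ with $\Omega_\rho=\{x:\mathrm{dist}(x,\partial\Omega)\ge\rho\}$, exactly as in Theorem \ref{thm:stab-ellipt}. On $\Omega_\rho$ one writes
\begin{equation*}
\int_{\Omega_\rho}(q^\dagger-q_h^*)^2\,\mathrm{d}x
=\rho^{-2\beta}\!\int_{\Omega_\rho}(q^\dagger-q_h^*)^2\rho^{2\beta}\,\mathrm{d}x
\le \rho^{-2\beta}\!\int_{\Omega_\rho}(q^\dagger-q_h^*)^2\,\mathrm{dist}(x,\partial\Omega)^{2\beta}\,\mathrm{d}x
\le C\rho^{-2\beta}\|(q^\dagger-q_h^*)u^{n^*}(q^\dagger)\|_{L^2(\Omega)}^2\le C\rho^{-2\beta}E^2,
\end{equation*}
while on $\Omega_\rho^c$ the box constraint in $K_h\subset K$ gives $\int_{\Omega_\rho^c}(q^\dagger-q_h^*)^2\,\mathrm{d}x\le (c_1-c_0)^2|\Omega_\rho^c|\le C\rho$. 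Adding the two contributions yields $\|q^\dagger-q_h^*\|_{L^2(\Omega)}^2\le C(\rho^{-2\beta}E^2+\rho)$, and optimizing over $\rho>0$ (i.e. choosing $\rho\sim E^{2/(1+2\beta)}$) gives $\|q^\dagger-q_h^*\|_{L^2(\Omega)}^2\le C\,E^{2/(1+2\beta)}$, hence the claimed bound $\|q^\dagger-q_h^*\|_{L^2(\Omega)}\le C\,E^{1/(1+2\beta)}$. Since the proof is word-for-word the elliptic argument with $E$ substituted for the continuous stability right-hand side, in the write-up I would simply say ``the proof is identical with that of Theorem \ref{thm:stab-ellipt}, using Theorem \ref{thm:error estimates of q-q_h with weight in parabolic system} in place of the conditional stability estimate, and hence is omitted,'' matching the style already adopted for Corollary \ref{coro:standard L2 in ellip}.

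The only genuine point requiring a sentence of care — and the step I expect to be the mild ``obstacle'' — is the passage from the triple-sum bound \eqref{equ:error estimates of q-q_h in parabolic system} to a bound at a single time level: one must observe that the weights $\tau^3$ summed over $N_0+1\le i\le j\le N$ and $i\le n\le j$ total a quantity bounded below by a positive constant independent of $\tau$ (roughly $(T-T_0)^3/6$), so that the minimum over $n$ of $\|(q^\dagger-q_h^*)u^n(q^\dagger)\|_{L^2(\Omega)}$ is controlled by $C\,E$; the uniform-in-$t$ nature of the constant in \eqref{eqn:positivity-parab} then makes the subsequent splitting legitimate at that particular $n^*$. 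Everything else is the routine balancing already carried out twice in the paper.
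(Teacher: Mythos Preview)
Your proposal is correct and matches the paper's approach: the paper gives no explicit proof for this corollary, treating it as an immediate consequence of the domain-splitting argument in Theorem~\ref{thm:stab-ellipt} (just as it did for Corollary~\ref{coro:standard L2 in ellip}). You have additionally spelled out the one step the paper passes over in silence---namely, that the triple sum in \eqref{equ:error estimates of q-q_h in parabolic system} has total weight $\sim (T-T_0)^3$ independent of $\tau$, so some time level $n^*$ inherits the bound $\|(q^\dagger-q_h^*)u^{n^*}(q^\dagger)\|_{L^2(\Omega)}\le CE$---which is exactly the ingredient needed to feed into the $\Omega_\rho$/$\Omega_\rho^c$ splitting.
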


\begin{remark}\label{remark:index match in parabolic system}
For the choice $\alpha\sim\delta^2$, $\tau\sim\delta$ and $h\sim\delta^{\frac{1}{2}}$, we obtain the following interior estimate: $\|q^{\dagger}-q_h^*\|_{L^2(\Omega')}\leq  C\delta^{\frac{1}{4}}.$
Likewise, under the positivity condition, there holds 
$\|q^{\dagger}-q_h^*\|_{L^2(\Omega)}\leq  C\delta^{\frac{1}{4(1+2\beta)}}.$
\end{remark}

\section{Numerical experiments and discussions}
\label{sec:numer}
Now we present numerical experiments for elliptic and parabolic inverse problems to illustrate the analysis. We solve the discrete optimization system by the conjugate gradient method \cite{Alifanov:1995}, with the gradient computed by the standard adjoint technique. Despite the regularized functional being nonconvex, the method converges relatively robustly with the given initial guess, and the convergence is achieved within tens of iterations. The lower and upper bounds of the box constraint $K$ are taken to be $0.4$ and $2$, respectively. In the elliptic case, the noise data $z^\delta$ is generated by
\begin{equation*}
	z^\delta(x)=u(q^\dagger)(x)+\epsilon\|u(q^\dagger)\|_{L^\infty(\Omega)}\xi(x),\quad x\in \Omega,
\end{equation*}
where $\xi$ follows the standard normal Gaussian distribution and $\epsilon>0$ is the relative noise level. The noise data $z^\delta(x,t)$ in the parabolic case is generated similarly. In the computation, we set the parameters $\alpha\sim\delta^2$, $h\sim\sqrt{\delta}$ and $\tau\sim\delta$ according to Remarks \ref{remark:index match in elliptic system} and \ref{remark:index match in parabolic system}.

The first test is about the elliptic case.
\begin{example}\label{exam:elliptic}
We consider the following two cases.
\begin{itemize}
\item[(a)] $\Omega=(0,1)$, $q=1+x(1-x)\sin(2\pi x)$ and $f\equiv1$. 
\item[(b)] $\Omega=(0,1)^2$, $q=1+y(1-y)\sin(\pi x)$ and $f\equiv1$. 
\end{itemize}
\end{example}

To study the convergence behavior of the discrete approximation $q_h^*$, we employ two different metrics, i.e.,  $e_q=\|q^\dagger-q^*_h\|_{L^2(\Omega)}$ and $e_u=\|u(q^\dagger)-u_h(q^*_h)\|_{L^2(\Omega)}$. Note that the error analysis provides a convergence $O(\delta^\frac14)$ at best (in the interior) for $e_q$, and the state approximation $e_u$ is predicted to be $O(\delta)$. The numerical results for Example \ref{exam:elliptic} are presented in Table \ref{tab:err-ell}. The convergence of $e_q$ and $e_u$ can be observed clearly as the noise level $\delta\to 0$, more precisely, in the one-dimensional case, with the behavior $e_q\sim\delta^{0.54}$ and $e_u\sim\delta^{1.01}$. These observations remain valid for the two-dimensional test in (b). The empirical rate of $e_q$ is much faster than the theoretical one, indicating potential suboptimality of the theoretical results in Corollaries \ref{coro:interior error estimates in $L^2$-norm of elliptic problem} and \ref{coro:standard L2 in ellip}. Figs. \ref{fig:recon-ell1d} and \ref{fig:recon-ell2d} show the numerical reconstruction for the examples with different noise levels. These plots clearly show the convergence of the reconstruction $q_h^*$ as the noise level $\delta$ tends to zero. Note the accuracy near the boundary is a bit worse in all cases.

\begin{table}[htpb]
	\caption{Numerical results for Example \ref{exam:elliptic}.}
	\begin{center}\label{tab:err-ell}
		\subfigure[initialized with $\alpha=2.00${\rm e-7} and $h=2.00${\rm e-2}]{\begin{tabular}{c|cccccccc}
			\toprule
			$\epsilon$ & 2.00e-2 & 5.00e-3 & 1.25e-3 & 3.13e-4 & 7.81e-5 & 1.95e-5 & rate\\
			\midrule
			$e_q$ & 1.30e-1 & 1.20e-1 & 2.85e-2 & 2.57e-2 & 1.02e-2 & 5.98e-3 & 0.54 \\
			$e_u$ & 2.71e-4 & 6.81e-5 & 1.80e-5 & 3.43e-6 & 9.78e-7 & 1.72e-7 & 1.01 \\
			\midrule
			\end{tabular}}
		\subfigure[initialized with $\alpha=1.00${\rm e-6} and $h=1.00${\rm e-1}]{
      \begin{tabular}{c|cccccccc}
			\toprule
			$\epsilon$ & 1.00e-2 & 2.50e-3 & 6.25e-4 & 4.00e-4 & 1.00e-4 & rate\\
		\midrule
		$e_q$      & 1.16e-1 & 9.14e-2 & 7.07e-2 & 5.81e-2 & 3.55e-2 & 0.33\\
		$e_u$      & 8.36e-5 & 3.41e-5 & 1.30e-5 & 7.88e-6 & 1.60e-6 & 0.99\\
		\bottomrule
		\end{tabular}}
	\end{center}
\end{table}

\begin{figure}[hbpt!]
	\centering
	\subfigure[$\epsilon=2.00${\rm e-2}]{\includegraphics[width=0.3\textwidth,trim={4.5cm 9cm 4cm 9cm}, clip]{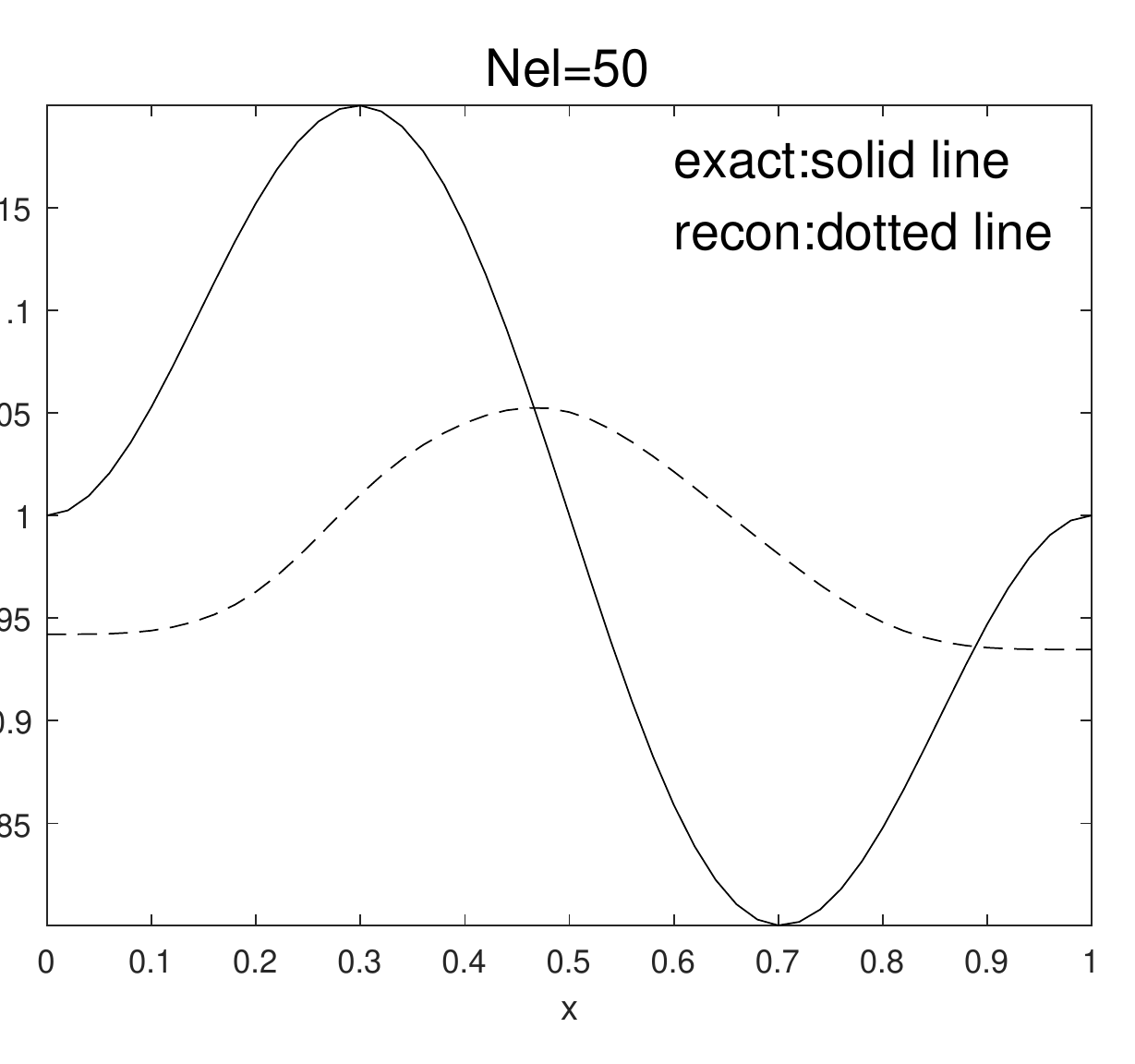}}
	\subfigure[$\epsilon=3.13${\rm e-4}]{
	\includegraphics[width=0.3\textwidth,trim={4.5cm 9cm 4cm 9cm}, clip]{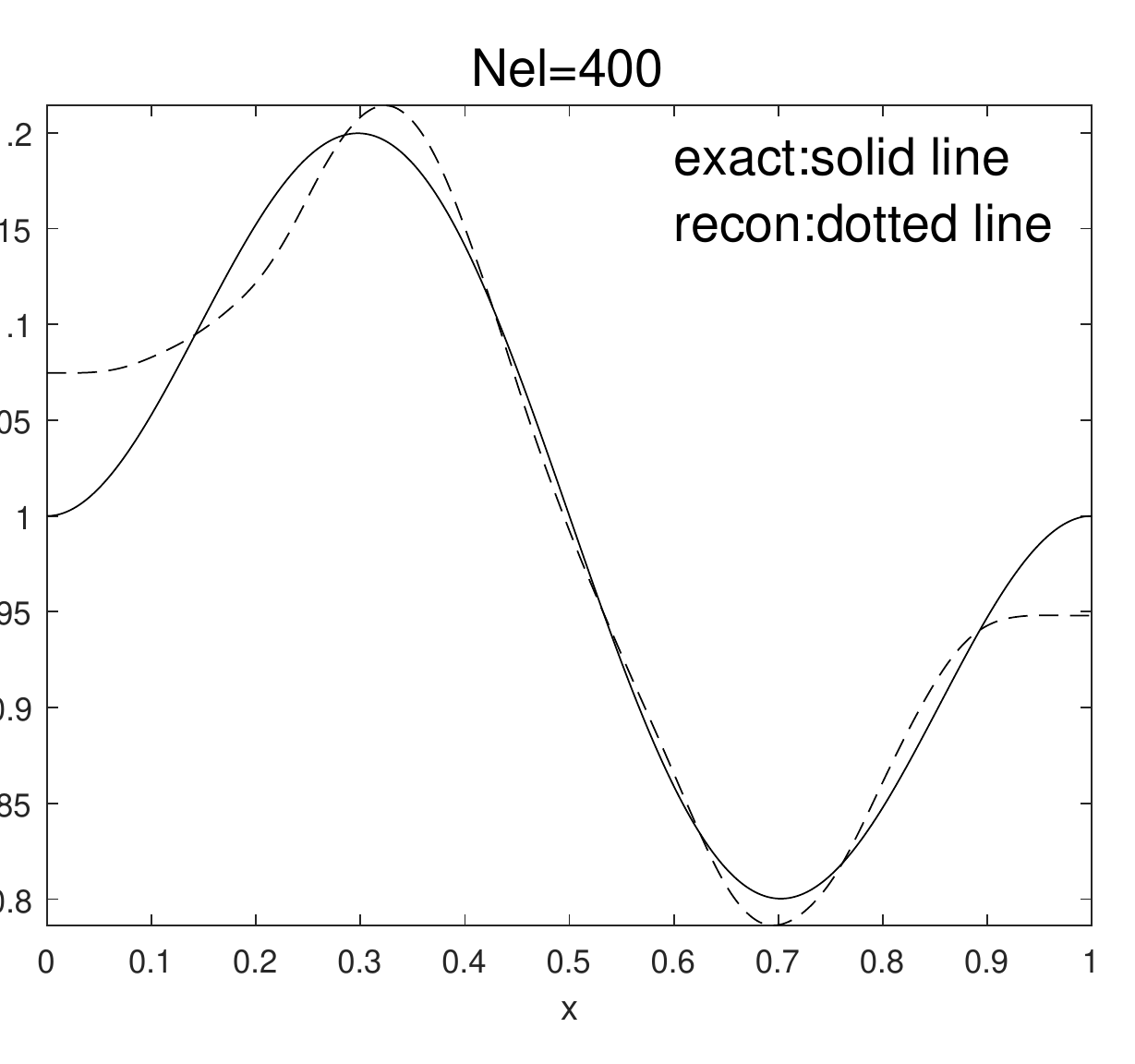}	}
	\subfigure[$\epsilon=1.95${\rm e-5}]{
	\includegraphics[width=.3\textwidth,trim={4.5cm 9cm 4cm 9cm}, clip]{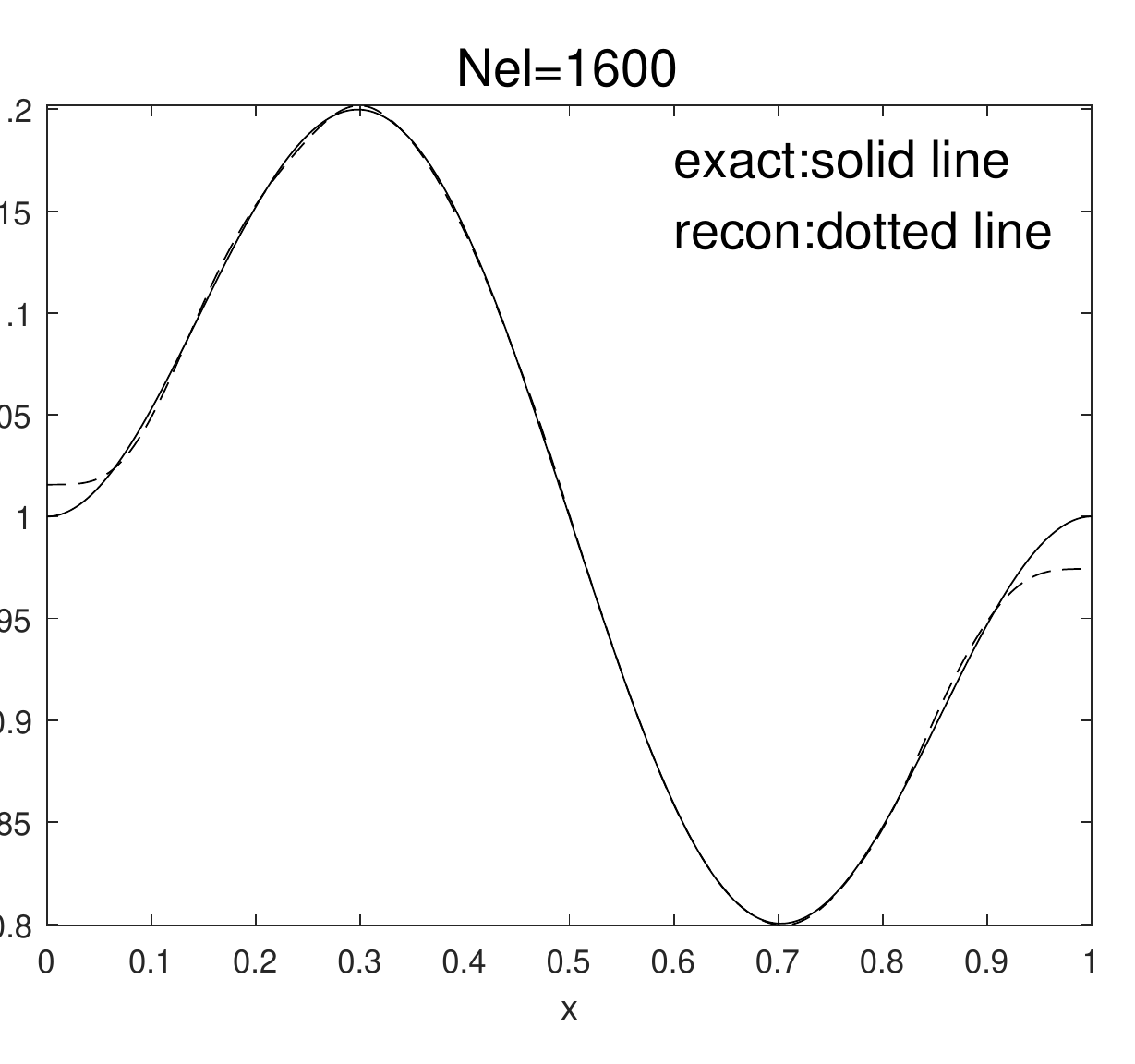}
	}
	\caption{Numerical reconstructions for Example \ref{exam:elliptic}(a).} \label{fig:recon-ell1d}
\end{figure}

\begin{figure}[htbp]
	\centering
	\subfigure[exact]{
	\includegraphics[width=0.29\textwidth,trim={4.5cm 9cm 4cm 9cm}, clip]{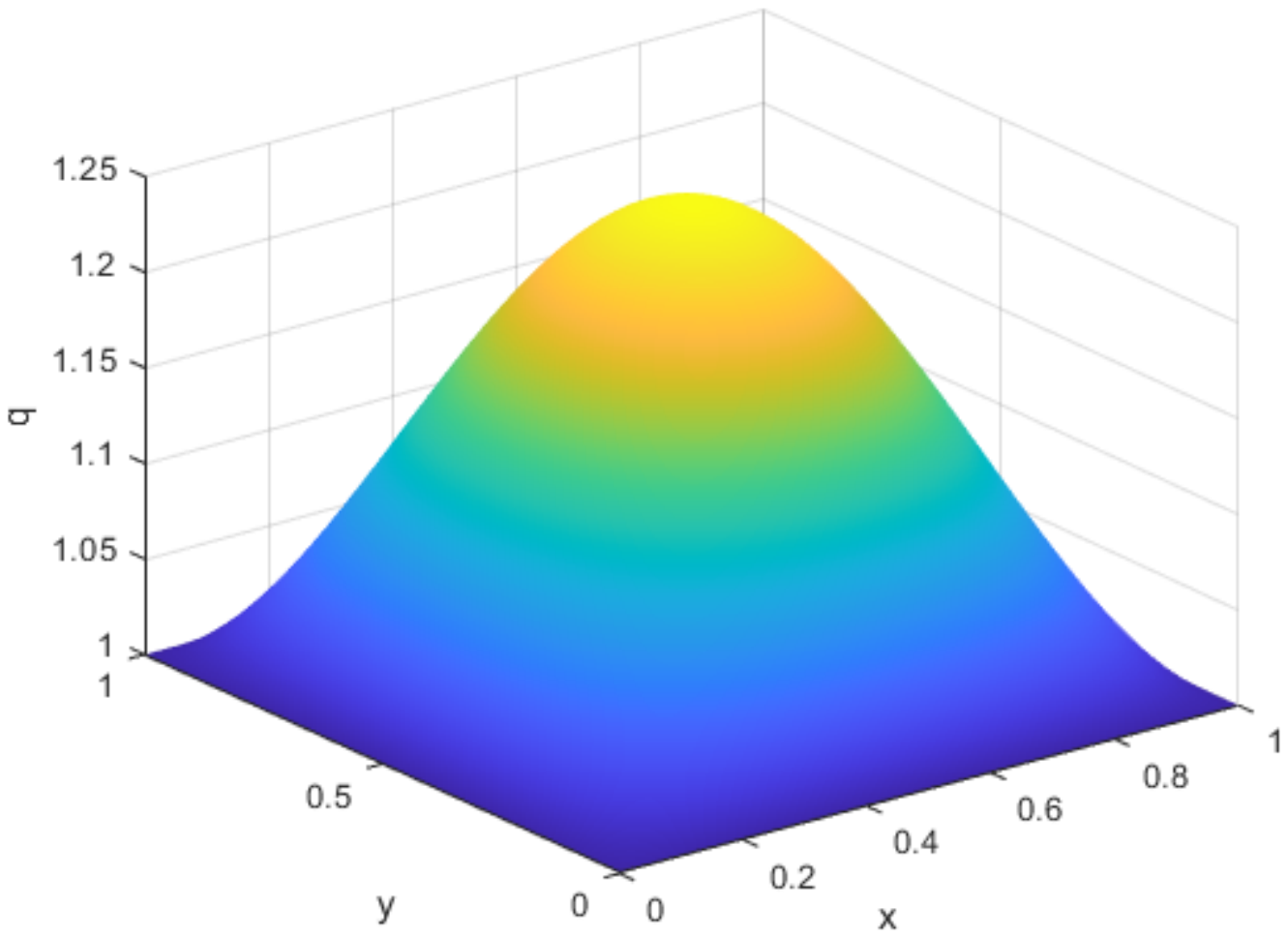}	}
	\subfigure[$\epsilon=1.00${\rm e-4}]{
    \includegraphics[width=0.29\textwidth,trim={4.5cm 9cm 4cm 9cm}, clip]{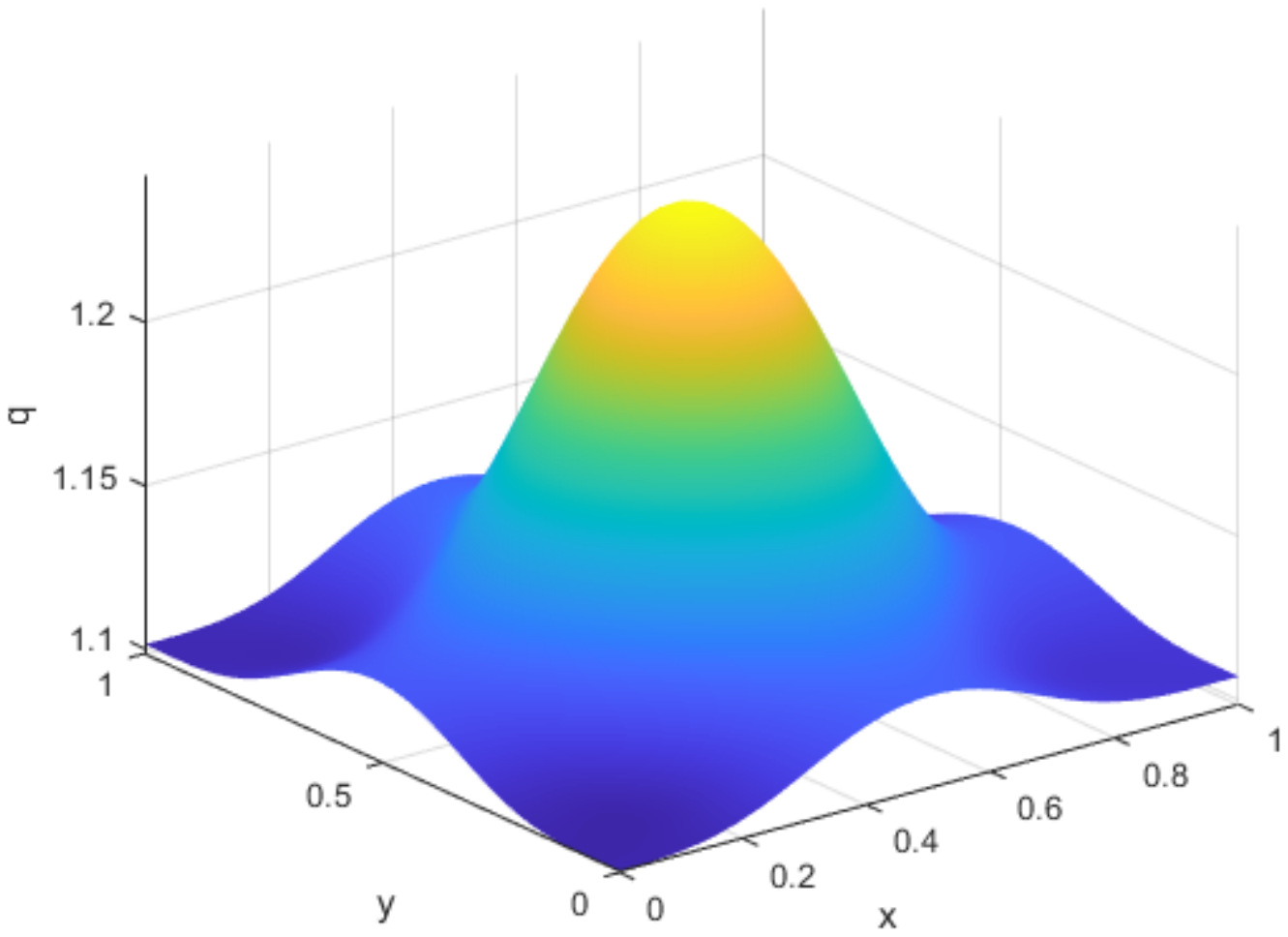}	}
	\subfigure[$\epsilon=3.91${\rm e-5}]{
    \includegraphics[width=0.29\textwidth,trim={4.5cm 9cm 4cm 9cm}, clip]{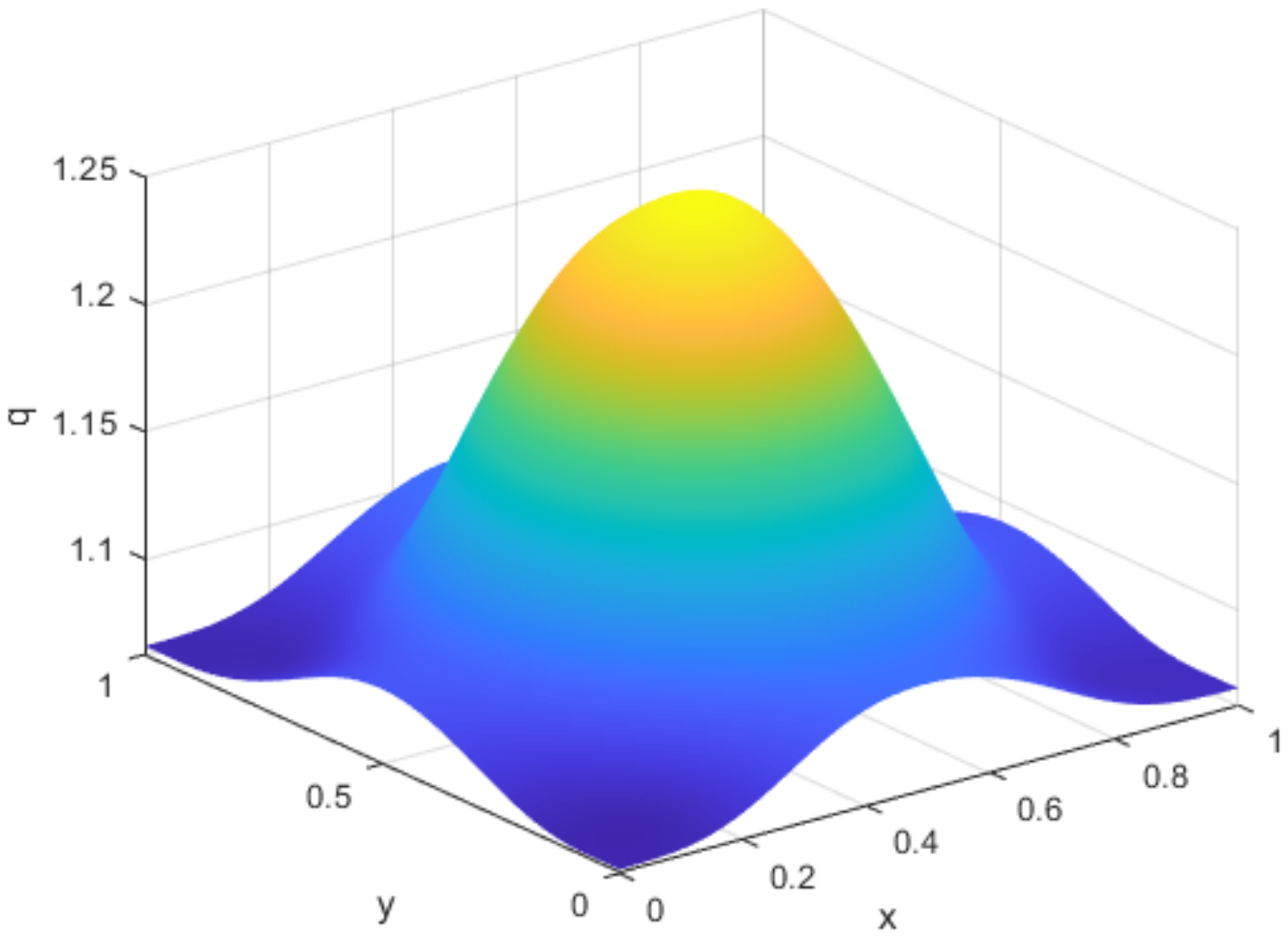}	}
	\caption{Numerical reconstructions for Example \ref{exam:elliptic}(b).}
	\label{fig:recon-ell2d}
\end{figure}

The second test is about the parabolic case.
\begin{example}\label{exam:para}
We consider the following two cases.
\begin{itemize}
    \item[(a)] $\Omega=(0,1)$, $T_0 = 0$, $T=0.01$, $q=1+\sin(2\pi x)/2$, $u_0 = \sin(\pi x)$ and $f\equiv 1$.
    \item[(b)] $\Omega=(0,1)^2$, $T_0=0$, $T=0.01$, $q=1+\sin(\pi x)\sin(\pi y)/2$, $u_0=\sin(\pi x)\sin(\pi y)$ and $f\equiv1$.  
\end{itemize}

\end{example}

Like in the elliptic case, we monitor the following two metrics, i.e., $e_q=\|q^\dagger-q^*_h\|_{L^2(\Omega)}$ and $e_u=(\tau\sum_{n=N_0}^{N}\|u^n(q^\dagger)-u^n_h(q^*_h)\|^2_{L^2(\Omega)})^\frac{1}{2}$. The numerical results for Example \ref{exam:para} are presented in Table \ref{tab:err-para1d} and Figs. \ref{fig:recon-para1d} and \ref{fig:recon-para2d}. Overall the convergence behaivour is very similar to the elliptic case: A steady convergence of both $e_q$ and $e_u$ is observed; The convergence rate of $e_u$ is slightly faster than first order; but the convergence rate of $e_q$ is again much higher than theoretical one. 
\begin{table}[htb!]
	\caption{Numerical results for Example \ref{exam:para}.}
	\begin{center}\label{tab:err-para1d}
		\subfigure[initialized with $\alpha=1.00${\rm e-8} and $h=2.00${\rm e-2}]{\begin{tabular}{c|cccccc}
			\toprule
			$\epsilon$ & 1.00e-2 & 2.50e-3 & 6.25e-4 & 1.56e-4 & 3.91e-5 & rate\\
			\midrule
			$e_q$      & 6.78e-1 & 1.73e-1 & 1.06e-1 & 6.69e-2 & 4.19e-2 & 0.50 \\
			$e_u$      & 1.61e-4 & 1.62e-5 & 3.68e-6 & 1.24e-6 & 2.31e-7 & 1.18 \\
			\bottomrule
		\end{tabular}}
		\subfigure[initialized with $\alpha=1.00${\rm e-6} and $h=1.00${\rm e-1}]{
		\begin{tabular}{c|cccccc}
			\toprule
			$\epsilon$ & 1.00e-2 & 2.50e-3 & 4.00e-4 & 1.00e-4 & rate \\
			\midrule
			$e_q$      & 9.35e-1 & 8.73e-1 & 4.11e-1 & 2.22e-1 & 0.30 \\
			$e_u$      & 4.31e-4 & 4.63e-5 & 2.15e-5 & 4.36e-6 & 1.06 \\
			\bottomrule
		\end{tabular}}
	\end{center}
\end{table}

\begin{figure}[htbp!]
	\centering
	\subfigure[$\epsilon=2.50${\rm e-3}]{
\includegraphics[width=0.3\textwidth,trim={4.5cm 9cm 4cm 9cm}, clip]{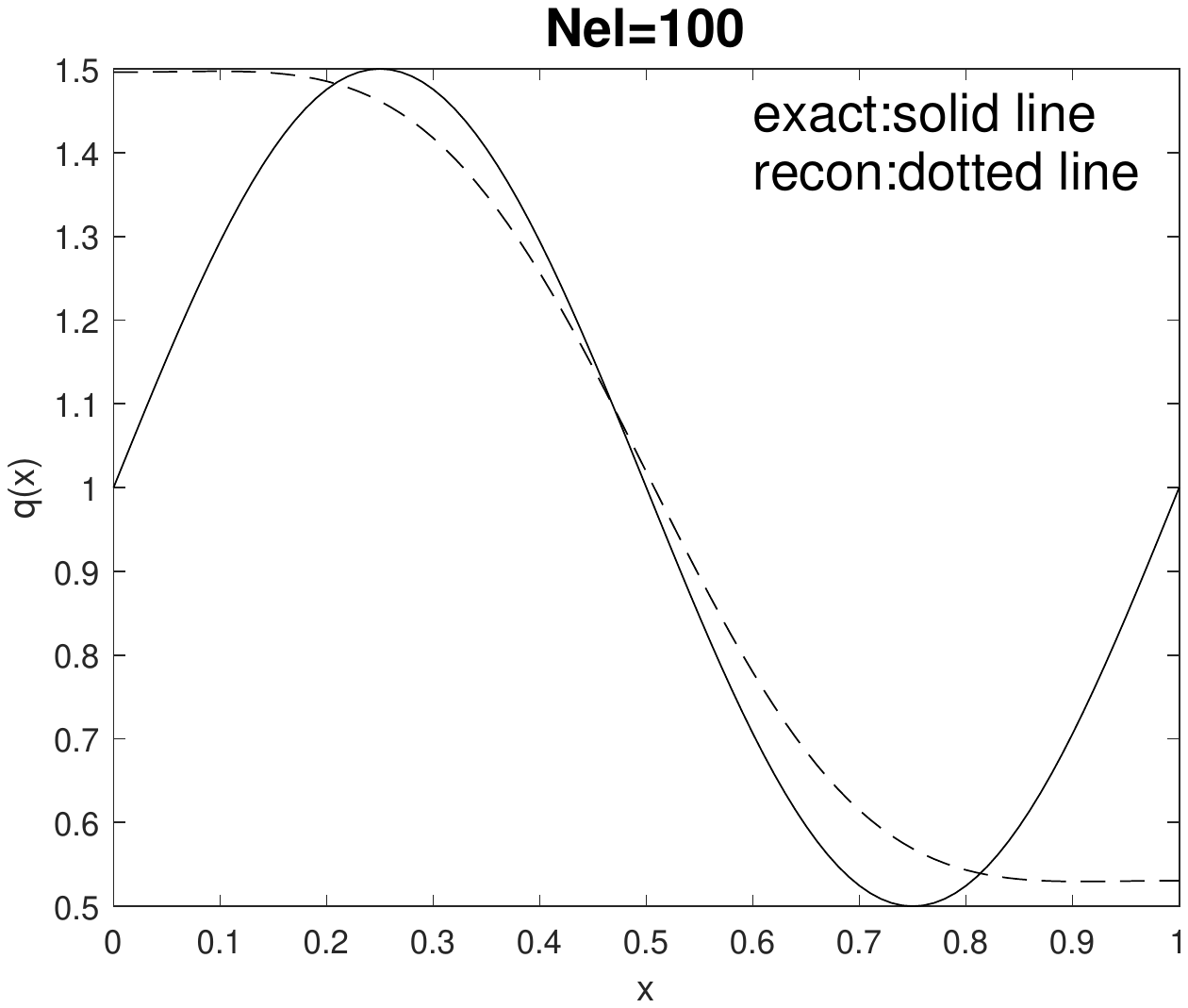}}
	\subfigure[$\epsilon=1.56${\rm e-4}]{
\includegraphics[width=0.3\textwidth,trim={4.5cm 9cm 4cm 9cm}, clip]{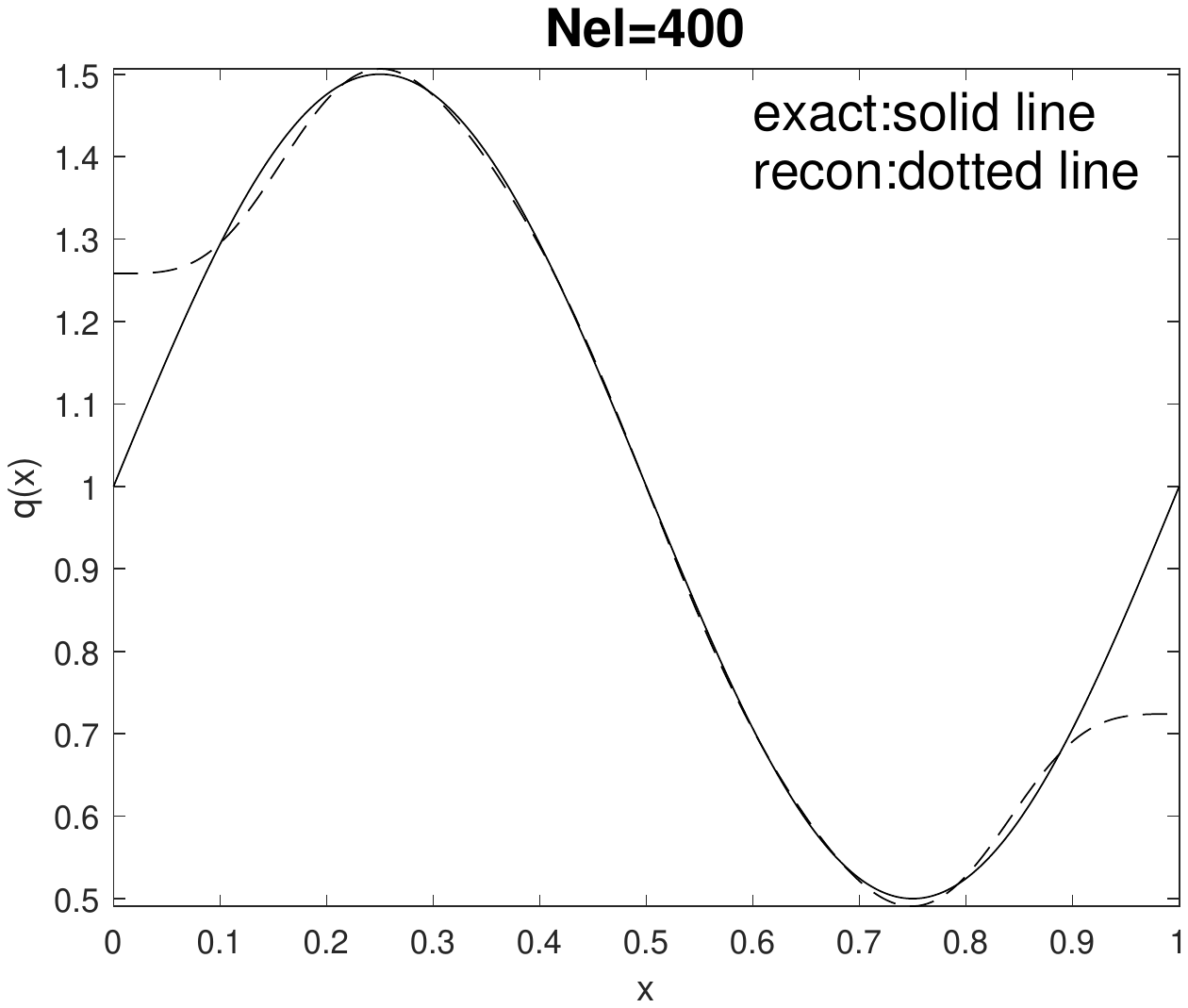}}
	\subfigure[$\epsilon= $3.91{\rm e-5}]{
\includegraphics[width=0.3\textwidth,trim={4.5cm 9cm 4cm 9cm}, clip]{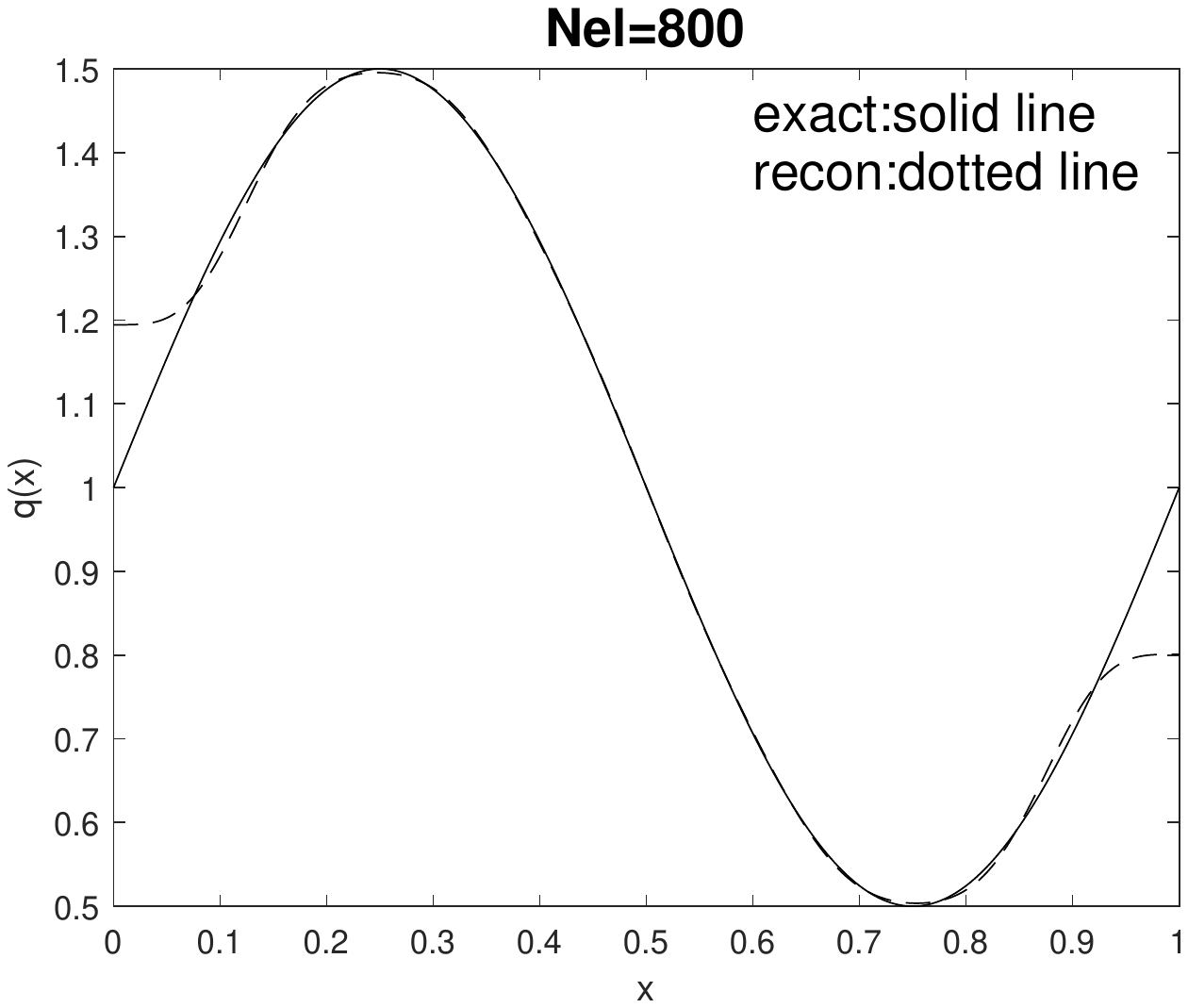}}
	\caption{Numerical reconstructions for Example \ref{exam:para}(a).}
	\label{fig:recon-para1d}
\end{figure}

\begin{figure}[htbp]
	\centering
\subfigure[exact]{\includegraphics[width=0.3\textwidth,trim={4.5cm 9cm 4cm 9cm}, clip]{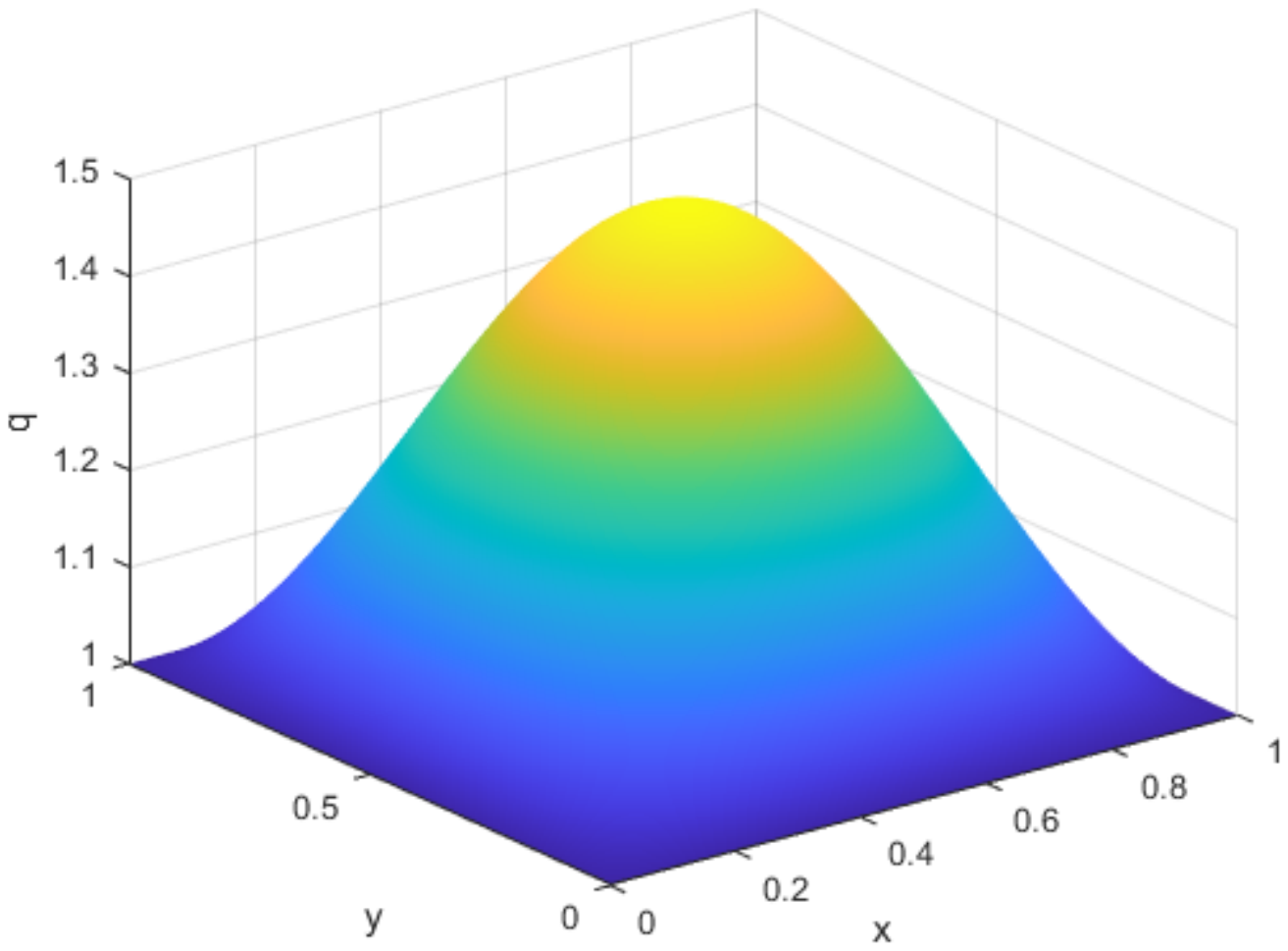}}
\subfigure[$\epsilon=4.00${\rm e-4}]{
\includegraphics[width=0.3\textwidth,trim={4.5cm 9cm 4cm 9cm}, clip]{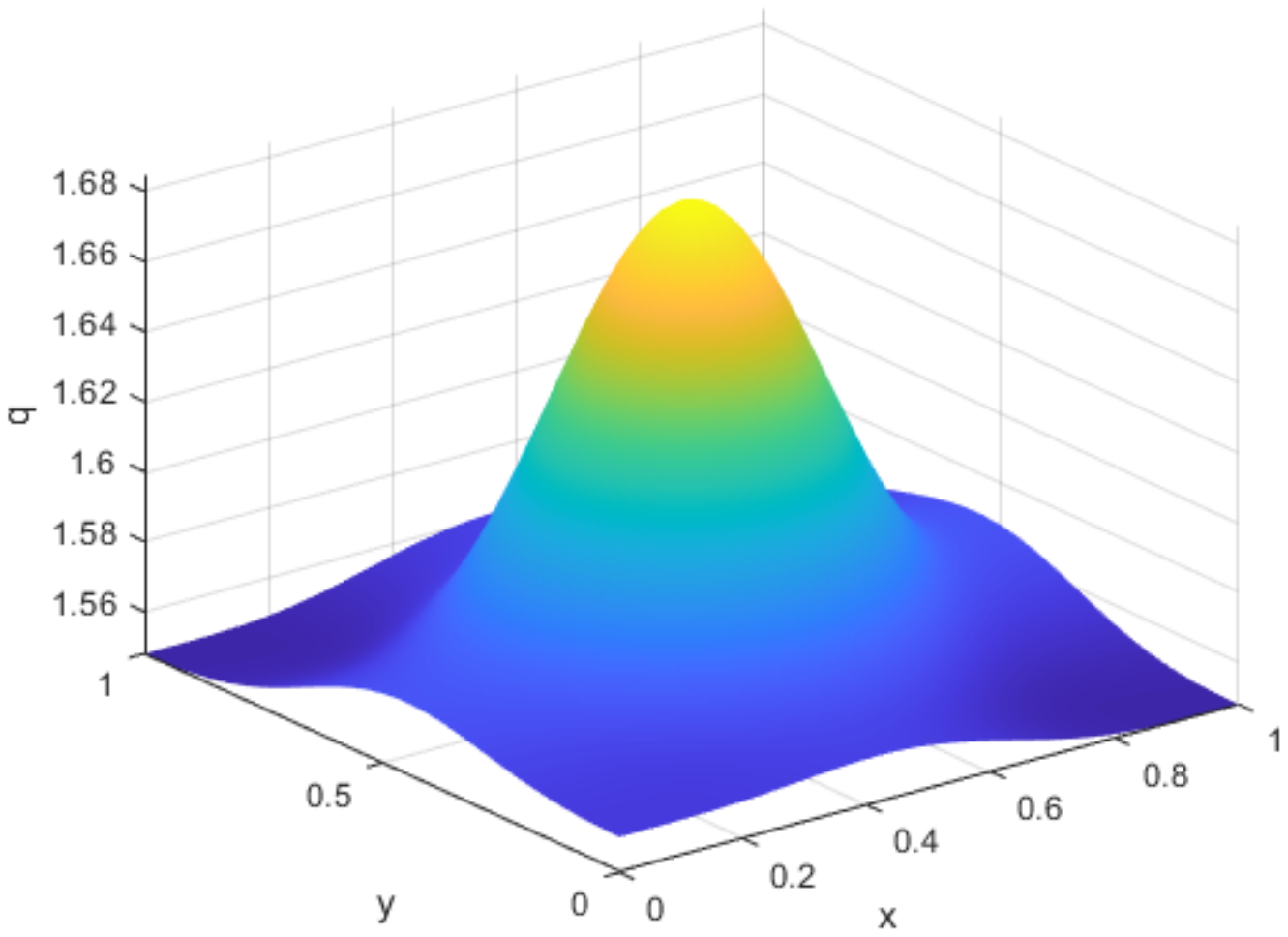}}
\subfigure[$\epsilon=1.00${\rm e-4}]{
\includegraphics[width=0.3\textwidth,trim={4.5cm 9cm 4cm 9cm}, clip]{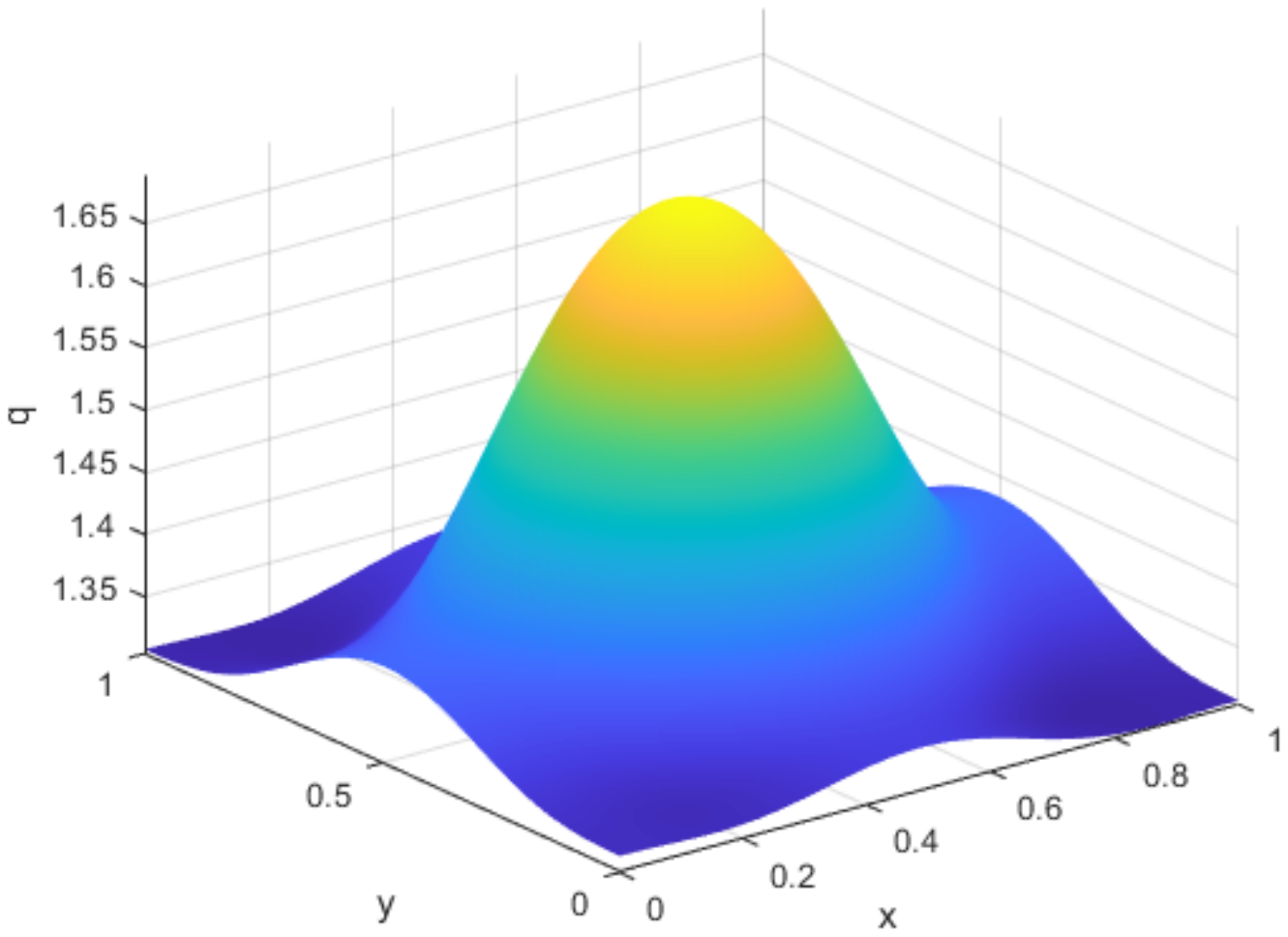}}
\caption{Numerical reconstructions for Example \ref{exam:para}(b).}
	\label{fig:recon-para2d}
\end{figure}

\appendix 
\section{Proofs of technical lemmas}

In this appendix, we collect the proof of Lemmas \ref{lemma:error estimates of u(q)-u_h(q) in L^2 in parabolic system}, \ref{lemma:error estimates of u(q)-u_h(Pi_hq) in L^2 in parabolic system} and \ref{lemma:error estimate of u(q)-u_h(q_h^*) and grad q_h^* in L^2 in parabolic system}. First we prove Lemma \ref{lemma:error estimates of u(q)-u_h(q) in L^2 in parabolic system}.
\begin{proof}
Let $u\equiv u(q^\dag)$ and $u_h^n\equiv u_h^n(q^\dag)$. Under the data regularity in Assumption \ref{assum:Assumption in parabolic system}, we have  $u\in H^1(0,T;H^1_0(\Omega))$. Consequently, 
\begin{equation*}
    |u^n-\bar{u}^n|= \Big|\tau^{-1}\int_{t^{n-1}}^{t^n}\int_{t}^{t^n}\partial_su(s)\mathrm{d}s\mathrm{d}t\Big|\leq\tau^{\frac{1}{2}}\Big(\int_{t^{n-1}}^{t^n}|\partial_tu|^2\mathrm{d}t\Big)^{\frac{1}{2}},
\end{equation*}
and thus the following \textit{a priori} estimate holds
\begin{equation*}
    \tau\sum_{n=1}^N\|u^n-\bar{u}^n\|^2_{L^2(\Omega)} 
    \leq \tau^2\sum_{n=1}^N\|\partial_tu\|^2_{L^2(t^{n-1},t^n;L^2(\Omega))}=\tau^2\|\partial_tu\|^2_{L^2(0,T;L^2(\Omega))}\leq C\tau^2. 
\end{equation*}
Thus it suffices to estimate $\tau\sum_{n=1}^{N}\|\bar{u}^n-u_h^n\|^2_{L^2(\Omega)}$. We introduce a discrete dual problem: find $\{w_h^{n-1}\}_{n=N}^{1}\subset V_{h0}$ with $w_h^N=0$ such that
\begin{equation}\label{equ:dual discrete problem in parabolic system}
		(-\partial_\tau w_h^n,\varphi_h)+(\nabla w_h^{n-1},\nabla\varphi_h)+(q^\dagger w_h^{n-1},\varphi_h) = (\bar{u}^n-u_h^n,\varphi_h), \quad\forall\varphi_h\in V_{h0}.
	\end{equation}
By letting $\varphi_h=-\tau\partial_\tau w_h^n$ in \eqref{equ:dual discrete problem in parabolic system}, and the Cauchy-Schwarz inequality, we have
\begin{align*}
    &\tau\|\partial_\tau w_h^{n}\|^2_{L^2(\Omega)}+\tfrac{1}{2}(\|\nabla w_h^{n-1}\|_{L^2(\Omega)}^2-\|\nabla w_h^{n}\|_{L^2(\Omega)}^2+\|q^{\dag\frac12} w_h^{n-1}\|_{L^2(\Omega)}^2-\|q^{\dag\frac12}w_h^{n}\|_{L^2(\Omega)}^2)\\
    &\leq C\tau\|\bar{u}^n-u_h^n\|_{L^2(\Omega)}^2+\tfrac{\tau}{2}\|\partial_\tau w_h^{n}\|^2_{L^2(\Omega)}.
\end{align*}
Since $w_h^N=0$ and $\nabla w_h^N={ 0}$, summing the inequality over $n$ from $1$ to $N$ gives
\begin{equation}\label{inequ:estimation for partialtau wh}
    \tau\sum_{n=1}^{N}\|\partial_\tau w_h^{n}\|^2_{L^2(\Omega)}\leq  C\tau\sum_{n=1}^{N}\|\bar{u}^n-u_h^n\|_{L^2(\Omega)}^2.
\end{equation}
Similarly, by letting $\varphi_h= 2w_h^{n-1}$ in \eqref{equ:dual discrete problem in parabolic system}, and applying the Cauchy-Schwarz inequality, we deduce
\begin{equation*}
    \tau^{-1}(\|w_h^{n-1}\|^2_{L^2(\Omega)}-\|w_h^{n}\|^2_{L^2(\Omega)})\leq \|\bar{u}^n-u_h^n\|_{L^2(\Omega)}^2+\|w_h^{n-1}\|^2_{L^2(\Omega)}.
\end{equation*}
Then summing this inequality over $n$ from $N$ to any  $k\in\{N,N-1\dots,1\}$ yields
\begin{equation*}
    \|w_h^{k-1}\|^2_{L^2(\Omega)}\leq \tau\sum_{n=N}^{k}\|w_h^{n-1}\|^2_{L^2(\Omega)}+\tau\sum_{n=N}^{k}\|\bar{u}^n-u_h^n\|_{L^2(\Omega)}^2.
\end{equation*}
Then for small $\tau$, the discrete Gronwall's inequality leads to
\begin{equation}\label{inequ:estimation for wh}
    \|w_h^{k-1}\|^2_{L^2(\Omega)}\leq C\tau\sum_{n=1}^{N}\|\bar{u}^n-u_h^n\|_{L^2(\Omega)}^2,
    \quad \forall k\in\{N,N-1,\dots,1\}.
\end{equation}
The estimates \eqref{inequ:estimation for partialtau wh} and \eqref{inequ:estimation for wh} together imply 
\begin{equation}\label{inequ:inequality in dual discrete problem of parabolic system}
    \max_{1\leq  n\leq  N}\|w_h^{n-1}\|_{L^2(\Omega)}^2+\tau\sum_{n=1}^{N}\|\partial_\tau w_h^{n}\|^2_{L^2(\Omega)}\leq  C\tau\sum_{n=1}^{N}\|\bar{u}^n-u_h^n\|_{L^2(\Omega)}^2.
\end{equation}
Next, using the Ritz projection $R_h$, we have the following splitting
    \begin{align*}
    	\tau\sum_{n=1}^{N}\|\bar{u}^n-u_h^n\|_{L^2(\Omega)}^2=&\tau\sum_{n=1}^{N}(\bar{u}^n-u_h^n,\bar{u}^n-R_h\bar{u}^n)+\tau\sum_{n=1}^{N}(\bar{u}^n-u_h^n,R_h\bar{u}^n-u_h^n).
    \end{align*}
Substituting $\varphi_h=\tau(R_h\bar{u}^n-u_h^n)$ into \eqref{equ:dual discrete problem in parabolic system}, using the definition of Ritz projection $R_h$ and summing over $n$ from $1$ to $N$ lead to
\begin{align}\label{equ:bar u^n-u_h^n in L2 norm}
    \tau\sum_{n=1}^{N}\|\bar{u}^n-u_h^n\|_{L^2(\Omega)}^2
    &=\tau\sum_{n=1}^{N}(\bar{u}^n-u_h^n,\bar{u}^n-R_h\bar{u}^n)+\tau\sum_{n=1}^{N}(-\partial_\tau w_h^n,R_h\bar{u}^n-u^n)\nonumber \\
    &\qquad+\tau\sum_{n=1}^{N}(-\partial_\tau w_h^n,u^n-u_h^n)+\tau\sum_{n=1}^{N}(\nabla w_h^{n-1},\nabla (\bar{u}^n-u_h^n))  \nonumber \\
    &\qquad+\tau\sum_{n=1}^N(q^\dagger w_h^{n-1},\bar{u}^n-u_h^n)=: \sum_{i=1}^5{\rm I_{i}}. \nonumber
    \end{align}
Thus, it suffices to bound the summands ${\rm I}_i$, $i=1,\ldots,5$. Meanwhile, integrating the identity \eqref{equ:variational problem in parabolic system} on the interval $(t^{n-1},t^n)$ for $n=1,\ldots,N$ gives
    \begin{equation}\label{equ:integration of variational formulation in papabolic system}
    	(\partial_\tau u^n,\varphi)+(\nabla\bar{u}^n,\nabla\varphi)+(q^\dagger\bar{u}^n,\varphi) =(\bar{f}^n,\varphi), \quad\forall\varphi\in H_0^1(\Omega).
    \end{equation}
Letting $\varphi=w_h^{n-1}$ in \eqref{equ:integration of variational formulation in papabolic system} and $\varphi_h=w_h^{n-1}$ in \eqref{equ:finite element problem in parabolic system} (associated with $q^\dag$ instead of $q_h$, i.e., the finite element problem for $u_h^n$), we deduce 
\begin{align}\label{equ:substracting formulation}
	\tau\sum_{n=1}^N(\partial_\tau(u^n-u_h^n),w^{n-1}_h)&+\tau\sum_{n=1}^N(\nabla(\bar{u}^n-u_h^n),\nabla w^{n-1}_h)\\&+\tau\sum_{n=1}^N(q^\dagger(\bar{u}^n-u_h^n),w^{n-1}_h)=
	\tau\sum_{n=1}^N(\bar{f}^n-f^n,w_h^{n-1}).\nonumber
\end{align}
Meanwhile, since $w_h^N=0$ and $u^0_h=P_hu_0$, by the summation by parts formula, we deduce \begin{equation}\label{equ:the summation by parts formula}
{\rm I_3}=(u_0-P_hu_0,w_h^0)+\tau\sum_{n=1}^N(\partial_\tau(u^n-u_h^n),w^{n-1}_h).
\end{equation}
It follows from \eqref{equ:substracting formulation} and \eqref{equ:the summation by parts formula} that
\begin{equation*}
    \sum_{i=3}^5{\rm I_i}=(u_0-P_hu_0,w_h^0)+\tau\sum_{n=1}^N(\bar{f}^n-f^n,w_h^{n-1}).
\end{equation*}
It remains to bound the terms separately. First, by \eqref{inequ:inequality in dual discrete problem of parabolic system},
\begin{align*}
    |(u_0-P_hu_0,w_h^0)| & \leq \|u_0-P_hu_0\|_{L^2(\Omega)}\|w_h^0\|_{L^2(\Omega)} \leq	Ch^2\|u_0\|_{H^2(\Omega)}\Big(\tau\sum_{n=1}^{N}\|\bar{u}^n-u_h^n\|_{L^2(\Omega)}^2\Big)^{\frac{1}{2}}.
\end{align*}
Meanwhile, since $\tau\sum_{n=1}^N\|\bar f^n-f^n\|_{L^2(\Omega)}^2\leq \tau^2\|\partial_tf\|_{L^2(0,T;L^2(\Omega))}$, we obtain from \eqref{inequ:inequality in dual discrete problem of parabolic system} that
\begin{align*}
    \Big|\tau\sum_{n=1}^N(\bar{f}^n-f^n,w_h^{n-1})\Big| 
    	&\leq 
    	\Big(\tau\sum_{n=1}^N\|\bar{f}^n-f^n\|_{L^2(\Omega)}^2\Big)^{\frac{1}{2}}\Big(\tau\sum_{n=1}^{N}\|\bar{u}^n-u_h^n\|_{L^2(\Omega)}^2\Big)^{\frac{1}{2}} \\
    	&\leq  C\tau\Big(\tau\sum_{n=1}^{N}\|\bar{u}^n-u_h^n\|_{L^2(\Omega)}^2\Big)^{\frac{1}{2}}.
\end{align*}
Further, it follows directly from \eqref{inequ:R_h on H^2}, the a priori regularity $u\in L^2(0,T;H^2(\Omega))\cap H^1(0,T;L^2(\Omega))$ and the estimate \eqref{inequ:inequality in dual discrete problem of parabolic system} that
\begin{align*}
    |{\rm I_1}|&\leq  Ch^2\|u\|_{L^2(0,T;H^2(\Omega))}\Big(\tau\sum_{n=1}^{N}\|\bar{u}^n-u_h^n\|_{L^2(\Omega)}^2\Big)^{\frac{1}{2}},\\
    	|{\rm I_2}|&= \Big|\tau\sum_{n=1}^{N}(-\partial_\tau w_h^n,R_h\bar{u}^n-\bar{u}^n) +\tau\sum_{n=1}^{N}(-\partial_\tau w_h^n,\bar{u}^n-u^n)\Big| \\
        &\leq 
        Ch^2\tau\sum_{n=1}^N\|\partial_\tau w_h^n\|_{L^2(\Omega)}\|\bar{u}^n\|_{H^2(\Omega)}+\tau\sum_{n=1}^N\|\partial_\tau w_h^n\|_{L^2(\Omega)}\|\bar{u}^n-u^n\|_{L^2(\Omega)} \nonumber\\
        &\leq  C(\tau+h^2)(\tau\sum_{n=1}^{N}\|\bar{u}^n-u_h^n\|_{L^2(\Omega)}^2)^{\frac{1}{2}}. \nonumber
    \end{align*}
These estimates together imply
\begin{equation*}
    \tau\sum_{n=1}^{N}\|\bar{u}^n-u_h^n\|^2_{L^2(\Omega)}\leq  C(\tau^2+h^4).
\end{equation*} 
The desired estimate \eqref{inequ:error estimates of u(q)-u_h(q) in L^2 in parabolic system} follows directly by the triangle inequality.
\end{proof}

Next we give the proof of Lemma \ref{lemma:error estimates of u(q)-u_h(Pi_hq) in L^2 in parabolic system}.
\begin{proof}
By Lemma \ref{lemma:error estimates of u(q)-u_h(q) in L^2 in parabolic system}, it suffices to prove
$$\tau\sum_{n=1}^{N}\|u_h^n(q^{\dagger})-u^n_h(\Pi_hq^{\dagger})\|^2_{L^2(\Omega)}\leq  C(\tau^2+h^4).$$ 
Let $\rho_h^n:= u_h^n(q^{\dagger})-u^n_h(\Pi_hq^{\dagger})$ for $n=1,2,\dots,N$. Then $\rho^n_h$ satisfies for any $\varphi_h\in V_{h0}$
	\begin{align}\label{equ:variational formulation for rho_h^n}
		 (\partial_\tau\rho_h^n,\varphi_h)+(\nabla\rho_h,&\nabla\varphi_h)+(\Pi_hq^\dagger\rho_h^n,\varphi_h)=((\Pi_hq^\dagger-q^\dagger)u_h^n(q^\dagger),\varphi_h)\\
		& =((\Pi_hq^\dagger-q^\dagger)(u_h^n(q^\dagger)-u^n(q^\dagger)),\varphi_h)+((\Pi_hq^\dagger-q^\dagger)u^n(q^\dagger),\varphi_h). \nonumber
	\end{align}
Letting $\varphi_h=2\rho_h^n$ in \eqref{equ:variational formulation for rho_h^n}, by \eqref{inequ:pi_h on H^2}, Assumption \ref{assum:Assumption in parabolic system}, the regularity $\|u(q^\dag)\|_{L^\infty(0,T;L^\infty(\Omega))}\leq C$ and Young's inequality, there holds
\begin{align*}
    	\tau^{-1}(\|\rho_h^n\|^2_{L^2(\Omega)}-\|\rho_h^{n-1}\|^2_{L^2(\Omega)}) 
    	&\leq 
    	C\|q^\dagger\|_{L^\infty(\Omega)}\|u^n(q^\dagger)-u_h^n(q^\dagger)\|_{L^2(\Omega)}\|\rho_h^n\|_{L^2(\Omega)}\\
    	&\quad + Ch^2\|q^\dagger\|_{H^2(\Omega)}\|u(q^\dagger)\|_{L^\infty(0,T;L^\infty(\Omega))})\|\rho_h^n\|_{L^2(\Omega)}\\
    	&\leq  C\|u^n(q^\dagger)-u_h^n(q^\dagger)\|^2_{L^2(\Omega)}+Ch^4+\|\rho_h^n\|_{L^2(\Omega)}^2.
\end{align*}
Summing the inequality over $n$ from $1$ to any $k\in\{1,2\dots,N\}$ and noting $\rho_h^0=0$ and Lemma \ref{lemma:error estimates of u(q)-u_h(q) in L^2 in parabolic system}, we have
    \begin{equation*}
        \|\rho_h^k\|_{L^2(\Omega)}^2\leq  C\tau\sum_{n=1}^{k}\|\rho_h^n\|^2_{L^2(\Omega)}+C(\tau^2+h^4),
    \end{equation*}
    and thus for small $\tau$, the discrete Gronwall's inequality leads directly to
    \begin{equation*}
    	\|\rho_h^k\|_{L^2(\Omega)}^2\leq C(\tau^2+h^4),\quad k=1,2,\dots,N.
    \end{equation*}
This completes the proof of the lemma.
\end{proof}

Last we give the proof of Lemma \ref{lemma:error estimate of u(q)-u_h(q_h^*) and grad q_h^* in L^2 in parabolic system}.
\begin{proof}
First we prove an elementary estimate:
\begin{equation}\label{eqn:err-NL}
      \tau\sum_{n=N_{0}}^N\|u^n(q^\dagger)-z^\delta_n\|^2_{L^2(\Omega)}\leq C(\tau^2+\delta^2).
 \end{equation}
Indeed, by the proof of Lemma \ref{lemma:error estimates of u(q)-u_h(q) in L^2 in parabolic system}, we have
$\tau\sum_{n=N_{0}}^N\|u^n(q^{\dagger})-\bar{u}^n(q^{\dagger})\|^2_{L^2(\Omega)} \leq C\tau^2.$ 
Meanwhile, direct computation leads to
\begin{equation*}
    |\bar{u}^n(q^{\dagger})-z_n^\delta|=\Big|\tau^{-1}\int_{t^{n-1}}^{t^n}u(q^\dagger)(t)-z^\delta(t)\mathrm{d}t\Big|\leq\tau^{-\frac{1}{2}}\Big(\int_{t^{n-1}}^{t^n}|u(q^\dagger)(t)-z^\delta(t)|^2\mathrm{d}t\Big)^{\frac{1}{2}},
\end{equation*}
and hence
\begin{equation*}
    \tau\sum_{n=N_{0}}^N\|\bar{u}^n(q^{\dagger})-z_n^\delta\|^2_{L^2(\Omega)}\leq \|u(q^\dagger) -z^\delta \|_{L^2(T_0,T;L^2(\Omega))}^2=\delta^2.
\end{equation*}
The claim \eqref{eqn:err-NL} follows from triangle inequality.
Since $q_h^*$ is the minimizer of the system \eqref{equ:discrete minimizing problem in parabolic system}-\eqref{equ:finite element problem in parabolic system} and $\Pi_hq^\dagger\in K_h$, we have 
$$J_{\alpha,h,\tau}(q_h^*)\leq  J_{\alpha,h,\tau}(\Pi_hq^\dagger).$$ 
Then by the inequality \eqref{inequ:pi_h on H^2}, $\|\nabla\Pi_hq^\dag\|_{L^2(\Omega)}\leq C\| q^\dag\|_{H^1(\Omega)}$, and thus there holds
\begin{align*}
	\tau\sum_{n=N_{0}}^N&\|u_h^n(q^*_h)-z^\delta_n\|^2_{L^2(\Omega)}+\alpha\|\nabla q_h^*\|^2_{L^2(\Omega)} 
		\leq   \tau\sum_{n=N_{0}}^N\|u_h^n(\Pi_hq^\dagger)-z^\delta_n\|^2_{L^2(\Omega)}+\alpha\|\nabla \Pi_hq^\dagger\|^2_{L^2(\Omega)} \\
		\leq  & C\tau\sum_{n=N_{0}}^N\|u_h^n(\Pi_hq^\dagger)-u^n(q^\dagger)\|^2_{L^2(\Omega)}+C\tau\sum_{n=N_{0}}^N\|u^n(q^\dagger)-z^\delta_n\|^2_{L^2(\Omega)}+C\alpha
		\leq   C(\tau^2+h^4+\delta^2+\alpha).
	\end{align*}
where the last step is due to Lemma \ref{lemma:error estimates of u(q)-u_h(Pi_hq) in L^2 in parabolic system} and \eqref{eqn:err-NL}. Then by the triangle inequality and \eqref{eqn:err-NL}, we obtain
	\begin{align*}
	    &\quad\tau\sum_{n=N_{0}}^N\|u^n(q^\dagger)-u^n_h(q_h^*)\|^2_{L^2(\Omega)}+\alpha\|\nabla q_h^*\|^2_{L^2(\Omega)} \\
	    &\leq 
	    C\Big(\tau\sum_{n=N_{0}}^N\|u^n(q^\dagger)-z_n^\delta\|^2_{L^2(\Omega)}+\tau\sum_{n=N_{0}}^N\|z_n^\delta-u^n_h(q_h^*)\|^2_{L^2(\Omega)}+\alpha\|\nabla q_h^*\|^2_{L^2(\Omega)}\Big) \\
	    &\leq C(\tau^2+h^4+\delta^2+\alpha).
	\end{align*}
This completes the proof of the lemma.
\end{proof}

\bibliographystyle{abbrv}
\bibliography{reference}

\end{document}